\newtheorem{theorem}{Theorem} 
\newtheorem*{theorem*}{Theorem}
\newtheorem{lemma}[theorem]{Lemma}
\newtheorem{definition}[theorem]{Definition}
\newtheorem{proposition}[theorem]{Proposition}
\newtheorem{corollary}[theorem]{Corollary}
\theoremstyle{remark}
\newtheorem{rmk}[theorem]{Remark}
\newcommand{\pp}{\mathbb{P}}
\newcommand{\qq}{\mathbb{Q}}
\newcommand{\rr}{\mathbb{R}}
\newcommand{\nn}{\mathbb{N}}
\newcommand{\eq}{\begin{equation}}
\newcommand{\en}{\end{equation}}
\newcommand{\ev}{\mathbb{E}}
\newcommand{\J} {J}
\newcommand{\JQ} {\widetilde J}
\newcommand{\JM}{\mathcal J}
\newcommand{\Y} {\mathbb Y}
\newcommand{\eps}{\varepsilon}
\newcommand{\GT}{\mathbb{GT}}
\renewcommand{\P}{\mathbb P}
\newcommand{\y}{\mathbf y}
\newcommand{\GG}{\overline{\mathcal{G}^N}}
\newcommand{\GGo}{{\mathcal{G}^N}}
\numberwithin{equation}{section} \numberwithin{theorem}{section}
\title{Multilevel Dyson Brownian motions via Jack polynomials}
\author{Vadim Gorin}
\address{Department of Mathematics, Massachusetts Institute of Technology, Cambridge, MA, USA and Institute for Information Transmission Problems of Russian Academy of Sciences, Moscow, Russia}
\email{vadicgor@gmail.com}
\author{Mykhaylo Shkolnikov}
\address{Department of Statistics, University of California, Berkeley, CA, USA}
\email{mshkolni@gmail.com}
\begin{document}

\begin{abstract}
We introduce multilevel versions of Dyson Brownian motions of arbitrary parameter $\beta>0$,
generalizing the interlacing reflected Brownian motions of Warren for $\beta=2$. Such processes
unify $\beta$ corners processes and Dyson Brownian motions in a single object. Our approach is
based on the approximation by certain multilevel discrete Markov chains of independent interest,
which are defined by means of Jack symmetric polynomials. In particular, this approach allows to
show that the levels in a multilevel Dyson Brownian motion are intertwined (at least for $\beta\ge
1$) and to give the corresponding link explicitly.
\end{abstract}

\maketitle

\tableofcontents

\section{Introduction}

\subsection{Preface}
The Hermite general $\beta>0$ ensemble of rank $N$ is a probability distribution on the set of $N$ tuples
of reals $z_1<z_2<\ldots<z_N$ whose density is proportional to
\begin{equation}
\label{eq_beta_Hermite} \prod_{1\le i<j \le N} (z_j-z_i)^\beta \; \prod_{i=1}^N
\exp\left(-\frac{z_i^2}{2}\right).
\end{equation}
When $\beta=2$, that density describes the joint distribution of the eigenvalues of a random
Hermitian $N\times N$ matrix $M$, whose diagonal entries are i.i.d. real standard normal random
variables, while real and imaginary parts of its entries above the diagonal are i.i.d. normal
random variables of variance $1/2$. The law of such a random matrix is referred to as the Gaussian
Unitary Ensemble (GUE) (see e.g.\ \cite{Meh}, \cite{AGZ}, \cite{For}) and it has attracted much
attention in the mathematical physics literature following the seminal work of Wigner in the 50s.
Similarly, the case of $\beta=1$ describes the joint distribution of eigenvalues of a real
symmetric matrix sampled from the Gaussian Orthogonal Ensemble (GOE) and the case $\beta=4$
corresponds to the Gaussian Symplectic Ensemble (GSE) (see e.g. \cite{Meh}, \cite{AGZ}, \cite{For}
for the detailed definitions).

\medskip

It is convenient to view the realizations of \eqref{eq_beta_Hermite} as a point process on the
real line and there are two well-known ways of adding a second dimension to that picture. The
first one is to consider an $N$-dimensional diffusion known as Dyson Brownian motion (see
\cite[Chapter 9]{Meh}, \cite[Section 4.3]{AGZ} and the references therein) which is the unique
strong solution of the system of stochastic differential equations
\begin{equation}
 \label{eq_Dyson_BM}
\mathrm{d}X_i(t)=\frac{\beta}{2}\,\sum_{j\neq i} \frac{1}{X_i(t)-X_j(t)}\,\mathrm{d}t +
\mathrm{d}W_i(t), \quad i=1,2,\ldots,N
\end{equation}
with $W_1,W_2,\ldots,W_N$ being independent standard Brownian motions. If one solves
\eqref{eq_Dyson_BM} with zero initial condition, then the distribution of the solution at time $1$
is given by \eqref{eq_beta_Hermite}. When $\beta=2$ and one starts with zero initial condition, the
diffusion \eqref{eq_Dyson_BM} has two probabilistic interpretations: it can be either viewed as a
system of $N$ independent standard Brownian motions conditioned never to collide via a suitable
Doob's $h$-transform; or it can be regarded as the evolution of the eigenvalues of a Hermitian
random matrix whose elements evolve as (independent) Brownian motions.

\medskip

An alternative way of adding a second dimension to the ensemble in \eqref{eq_beta_Hermite} involves the
so-called corner processes. For $\beta=2$ take a $N\times N$ GUE matrix $M$ and let
$x^N_1\le x^N_2\le \ldots\le x^N_N$ be its ordered eigenvalues. More generally for every $1\le k\le N$ let
$x^k_1\le x^k_2 \le \ldots\le x^k_k$ be the eigenvalues of the top-left $k\times k$ submatrix (``corner'') of
$M$. It is well-known that the eigenvalues \emph{interlace} in the sense that $x_i^k\le x_i^{k-1}\le x_{i+1}^k$ for
$i=1,\dots,k-1$ (see Figure \ref{Fig_interlace} for a schematic illustration of the eigenvalues).

\begin{figure}[h]
\begin{center}
 {\scalebox{0.8}{\includegraphics{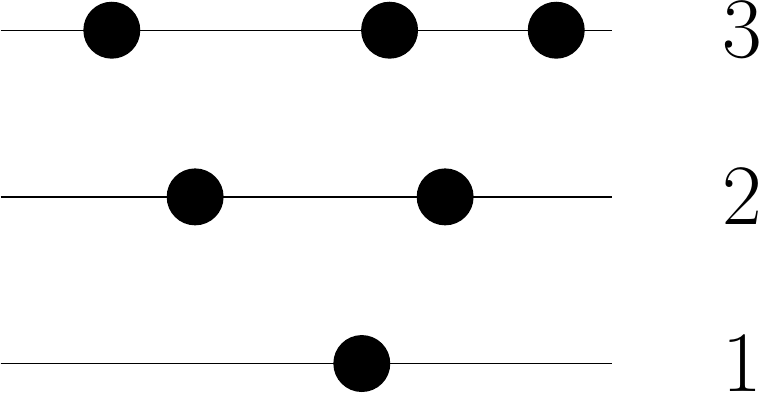}}}
\end{center}
\caption{Interlacing particles arising from eigenvalues of corners of a $3\times 3$ matrix. Row
number $k$ on the picture corresponds to eigenvalues of the $k\times k$ corner.}
\label{Fig_interlace}
\end{figure}

The joint distribution of $x^k_i$, $1\leq i\leq k\leq N$ is known as GUE-corners process (some
authors also use the name GUE-minors process) and its study was initiated in \cite{Bar} and
\cite{JN}. The GUE-corners process is uniquely characterized by two properties: its projection to
the set of particles $x^N_1,x^N_2,\ldots,x^N_N$ is given by \eqref{eq_beta_Hermite} with
$\beta=2$, and the conditional distribution of $x^k_i$, $1\leq i\leq k\leq N-1$ given
$x^N_1,x^N_2,\ldots,x^N_N$ is \emph{uniform} on the polytope defined by the interlacing conditions
above, see \cite{GN}, \cite{Bar}. Due to the combination of the gaussianity and uniformity
embedded into its definition, the GUE--corners process appears as a universal scaling limit for a
number of 2d models of statistical mechanics, see \cite{OR}, \cite{JN}, \cite{GP}, \cite{G-ASM},
\cite{GS}.

\medskip

Similarly, one can construct corners processes for $\beta=1$ and $\beta=4$, see e.g.\ \cite{N}.
Extrapolating the resulting formulas for the joint density of eigenvalues to general values of
$\beta>0$ one arrives at the following definition.

\begin{definition}\label{defbetacorner} The Hermite $\beta$ corners process of variance $t>0$ is the unique
probability distribution on the set of reals $x^k_i$, $1\leq i\leq k\leq N$ subject to the interlacing
conditions $x_i^k\le x_i^{k-1}\le x_{i+1}^k$ whose density is proportional to
\begin{equation}
\label{eq_beta_Hermite_corners_intro}
 \prod_{i<j} (x_j^N-x_i^N)
 \prod_{i=1}^N \exp\left(- \frac{(x_i^N)^2}{2t}\right)
 \prod_{k=1}^{N-1} \prod_{1\le i<j\le k} (x_j^k-x_i^k)^{2-\beta} \prod_{a=1}^k \prod_{b=1}^{k+1} |x^k_a-x^{k+1}_b|^{\beta/2-1}.
\end{equation}
\end{definition}
The fact that the projection of the Hermite $\beta$ corners process of variance $1$ onto level $k$
(that is, on the coordinates $x_1^k,x^k_2,\ldots,x_k^k$) is given by the corresponding Hermite
$\beta$ ensemble of \eqref{eq_beta_Hermite} can be deduced from the Dixon-Anderson integration
formula (see \cite{Dixon}, \cite{And}), which was studied before in the context of \emph{Selberg
integrals} (see \cite{Sel}, \cite[Chapter 4]{For}). One particular case of the Selberg integral is
the evaluation of the normalizing constant for the probability density of the Hermite $\beta$
ensemble of \eqref{eq_beta_Hermite}. We provide more details in this direction in Section
\ref{Section_limit_at_fixed_time}.

\bigskip

The ultimate goal of the present article is to \emph{combine} Dyson Brownian motions and corner
processes in a single picture. In other words, we aim to introduce a relatively simple diffusion
on interlacing particle configurations whose projection on a fixed level is given by Dyson
Brownian motion of \eqref{eq_Dyson_BM}, while its fixed time distributions are given by the
Hermite $\beta$ corners processes of Definition \ref{defbetacorner}.

\medskip

One would think that a natural way to do this (at least for $\beta=1,2,4$) is to consider a $N\times
N$ matrix of suitable Brownian motions and to project it onto the (interlacing) set of eigenvalues of the matrix
and its top--left $k\times k$ corners, thus generalizing the original construction of Dyson.
However, the resulting stochastic process ends up being quite nasty even in the case $\beta=2$. It
is shown in \cite{ANV} that already for $N=3$ (and at least \emph{some} initial conditions) the
projection is \emph{not} a Markov process. When one considers only two adjacent levels (that is, the
projection onto $x_1^N,x^N_2,\ldots,x_N^N; x_1^{N-1},x^{N-1}_2,\ldots,x_{N-1}^{N-1}$), then it can be
proven (see \cite{ANV}) that the projection \emph{is} Markovian, but the corresponding SDE is very complicated.

\medskip

An alternative elegant solution for the case $\beta=2$ was given by Warren \cite{W}. Consider the
process $(Y^k_i:\;1\leq i\leq k\leq N)$ defined through the following inductive procedure: $Y^1_1$
is a standard Brownian motion with zero initial condition; given $Y^1_1$, the processes $Y^2_1$ and
$Y^2_2$ are constructed as independent standard Brownian motions started at zero and
\emph{reflected} on the trajectory of $Y^1_1$ in such a way that $Y^2_1(t)\le Y^1_1(t)\le Y^2_2(t)$
holds for all $t\geq0$. More generally, having constructed the processes on the first $k$ levels
(that is, $Y^m_i$, $1\leq i\leq m\leq k$) one defines $Y^{k+1}_i$ as an independent standard
Brownian motion started at $0$ and reflected on the trajectories of $Y^k_{i-1}$ and $Y^k_{i}$ in
such a way that $Y^{k-1}_{i-1}(t)\le Y^k_i(t)\le Y^{k-1}_i(t)$ remains true for all $t\geq0$ (see
\cite{W} and also \cite{GS} for more details). Warren shows that the projection of the dynamics on
a level $k$ (that is, on $Y^k_1,Y^k_2,\ldots,Y^k_k$) is given by a $k$-dimensional Dyson Brownian
Motion of \eqref{eq_Dyson_BM} with $\beta=2$, and that the fixed time distributions of the process
$(Y^k_i:\;1\leq i\leq k\leq N)$ are given by the Hermite $\beta$ corners processes of Definition
\ref{defbetacorner} with $\beta=2$.

\medskip

Our aim is to construct a generalization of the Warren process for general values of $\beta$. In
other words, we want to answer the question ``What is the general $\beta$ analogue of the
reflected interlacing Brownian Motions of \cite{W}?''.

\subsection{Our results}

Our approach to the construction of the desired general $\beta$ multilevel stochastic process is
based on its discrete space approximation. In \cite{GS} we proved that the reflected interlacing
Brownian motions of \cite{W} can be obtained as a diffusive scaling limit for a class of
stochastic dynamics on \emph{discrete} interlacing particle configurations. The latter dynamics
are constructed from independent random walks by imposing the \emph{local} block/push interactions
between particles to preserve the interlacing conditions. The special cases of such processes
arise naturally in the study of 2d statistical mechanics systems such as random stepped surfaces
and various tilings (cf.\ \cite{BF}, \cite{Nordenstam}, \cite{BG_shuf}, \cite{B-Schur}).

In Section \ref{sec:disc} we introduce a deformation $X^{multi}_{disc}(t)$ of these processes
depending on a real parameter $\theta$ (which is omitted from the notation; $\theta=1$ corresponds
to the previously known case). The resulting discrete space dynamics is an intriguing interacting
particle system with \emph{global} interactions whose state space is given by interlacing particle
configurations with integer coordinates. Computer simulations of this dynamics for $\theta=1/2$ and
$\theta=2$ can be found at \cite{Han}.

We further study the diffusive limit of $X^{multi}_{disc}(s)$ under the rescaling of time
$s=\eps^{-1} t$ and $\eps^{-1/2}$ scaling of space as $\eps\downarrow 0$. Our first result is that
for any fixed $\theta>0$, the rescaled processes are \emph{tight} as $\eps\downarrow 0$, see
Theorem \ref{Theorem_multilevel_tight} for the exact statement. The continuous time, continuous
space processes $Y^{mu}(t)$ defined as \emph{subsequential limits} $\eps\downarrow 0$ of the
family $X^{multi}_{disc}(s)$, are our main heros and we prove a variety of results about them for
different values of $\theta$.

\begin{enumerate}
 \item For any $\theta\ge 2$ we show in Theorem \ref{Theorem_Limit_SDE} that $Y^{mu}(t)$ satisfies a system of SDEs
 \eqref{eq_intDBM_intro}.
 \item For any $\theta\ge 1/2$ and any $1\le k\le N$ we show in Theorem \ref{Theorem_restriction_DBM} that if $Y^{mu}(t)$ is started from
 a
 \emph{$\theta$--Gibbs} initial condition (zero initial condition is a particular case), then
  the $k$--dimensional restriction of $N(N-1)/2$
 dimensional process $Y^{mu}(t)$ to the level $k$ is $2\theta$--Dyson Brownian motion, i.e.\ the
 vector $(Y^{mu}(t)^k_1, Y^{mu}(t)^k_2,\dots, Y^{mu}(t)^k_k)$ solves \eqref{eq_Dyson_BM} with $\beta=2\theta$.
 \item For any $\theta> 0$ we show  that if $Y^{mu}(t)$ is started from zero initial condition,
 then its distribution at time $t$ is the Hermite $2\theta$ corners process of variance $t$, i.e.\
 \eqref{eq_beta_Hermite_corners_intro} with $\beta=2\theta$. In fact, we prove a more general statement, see Theorem
 \ref{Theorem_restriction_DBM} and Corollary \ref{cor_zero_initial}.
 \item For $\theta=1$ using the results of \cite{GS} one shows that $Y^{mu}(t)$ is  the reflected interlacing
Brownian motions of \cite{W}.
\end{enumerate}

The above results are complemented by the following uniqueness theorem for the system of SDEs
\eqref{eq_intDBM_intro}. In particular, it implies that for $\theta\ge 2$ all the subsequential
limits $Y^{mu}(t)$ of $X^{multi}_{disc}(s)$ as $\eps\downarrow 0$, are the same.

\begin{theorem}[Theorem
\ref{theorem_intDBM}]  For any $N\in\nn$ and $\theta>1$, the system of SDEs
\begin{equation}
\label{eq_intDBM_intro}
  \mathrm{d}Y^k_i(t) = \Biggl(\sum_{m\neq i} \frac{1-\theta}{Y^k_i(t)-Y^k_m(t)}-\sum_{m=1}^{k-1} \frac{1-\theta}
  {Y_i^k(t)-Y_m^{k-1}(t)}\Biggr)\,\mathrm{d}t + \mathrm{d}W_i^k(t), \quad 1\leq i\leq k\leq N,
\end{equation}
where $W_i^k$, $1\leq i\leq k\leq N$ are independent standard Brownian motions, possesses a unique
weak solution taking values in the cone \eq \overline{\mathcal{G}^N} =\left\{y=(y^k_i)_{1\leq
i\leq k\leq N}\in\rr^{N(N+1)/2}:\;y^{k-1}_{i-1}\leq y^k_i\leq y^{k-1}_i\right\} \en for any
initial condition $X(0)$ in the interior of $\overline{\mathcal{G}^N}$.
\end{theorem}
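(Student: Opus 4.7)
The plan is to exploit the triangular structure of \eqref{eq_intDBM_intro}: the drift in the equation for level $k$ involves only particles on levels $k-1$ and $k$. This allows one to construct and uniquely determine the weak solution by induction on $k$, conditioning on the paths of the lower levels at each stage. The base case $k=1$ is immediate since $Y^1_1$ is a standard Brownian motion started from the prescribed value, unique in law.

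For the inductive step, suppose the joint law of $(Y^m_i)_{1\le i\le m\le k-1}$ is already uniquely determined. Treating the continuous path $(Y^{k-1}_m(t))_{m=1}^{k-1}$ as given, the level-$k$ equations become a time-inhomogeneous SDE for $(Y^k_1,\dots,Y^k_k)$ whose drift is real-analytic on the open set $\mathcal{R}(t)=\{y\in\mathbb{R}^k:\,y^{k-1}_{i-1}(t)<y_i<y^{k-1}_i(t)\text{ for all }i,\ y_i\ne y_j\text{ for }i\ne j\}$; classical SDE theory then gives a unique strong solution up to the first exit time from $\mathcal{R}$. The heart of the theorem is to show that this local solution extends to a weak solution in $\overline{\mathcal{G}^N}$ globally in time and is unique in law. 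The same-level coefficient $(1-\theta)/(Y^k_i-Y^k_m)$ is attractive when $\theta>1$, but the interlacing forces $Y^{k-1}_j$ to sit strictly between $Y^k_j$ and $Y^k_{j+1}$, ruling out same-level collisions. The between-level coefficient $(\theta-1)/(Y^k_i-Y^{k-1}_m)$ is Bessel-repulsive: the gap $R=Y^k_i-Y^{k-1}_m$ obeys $dR=(\theta-1)R^{-1}\,dt+\sqrt{2}\,dB+(\text{bounded})\,dt$, i.e.\ a perturbed Bessel process of dimension $\theta$.

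The main obstacle is controlling the between-level collisions and justifying the extension uniformly in $\theta>1$. For $\theta\ge 2$ the Bessel comparison gives strict non-collision: a Lyapunov function of the form $L=\sum_i\bigl(-\log(Y^{k-1}_i(t)-Y^k_i)-\log(Y^k_i-Y^{k-1}_{i-1}(t))\bigr)$ has generator image bounded from above along trajectories, so the process never reaches the boundary of $\mathcal{R}$ and the strong solution is global and unique. For $1<\theta<2$ the Bessel dimension $\theta$ is less than $2$, so the gap can vanish on a Lebesgue-null set of times, and one must interpret \eqref{eq_intDBM_intro} as a weak solution in the closed cone with the drift viewed as a process locally integrable along trajectories. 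My plan here is to truncate the singular drift at scale $1/n$, obtain the unique strong solution for each $n$, establish tightness in path space, identify any subsequential limit as a weak solution in $\overline{\mathcal{G}^N}$ via the martingale problem, and prove uniqueness in law by combining the interior strong uniqueness with a Girsanov change of measure on the process stopped when the smallest gap first drops below $1/n$, passing to the limit using that the collision set has zero Lebesgue measure. Iterating the induction from $k=1$ up to $k=N$ yields the unique weak solution asserted by the theorem.
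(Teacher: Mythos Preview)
Your level-by-level induction and Bessel heuristic capture the right intuition, but there is a genuine gap in the range $1<\theta<2$, and it is precisely the point where the paper invests most of its work. Two claims in your sketch do not hold as stated. First, ``the interlacing forces $Y^{k-1}_j$ to sit strictly between $Y^k_j$ and $Y^k_{j+1}$, ruling out same-level collisions'' is circular: for $1<\theta<2$ you yourself allow between-level gaps to vanish, so interlacing only tells you that a same-level collision $Y^k_j=Y^k_{j+1}$ is simultaneously a \emph{triple} collision $Y^k_j=Y^{k-1}_j=Y^k_{j+1}$; it does not rule it out. Second, in your Bessel comparison for $R=Y^k_i-Y^{k-1}_m$, the ``$(\text{bounded})\,dt$'' term collects all remaining drift contributions, including the same-level attractions $(1-\theta)/(Y^k_i-Y^k_{m'})$ and the level-$(k-2)$ repulsions $(\theta-1)/(Y^{k-1}_m-Y^{k-2}_{m'})$ coming from the $Y^{k-1}_m$ equation. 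None of these is bounded near a triple collision or when the lower-level path happens to touch level $k-2$, so the comparison with a Bessel process of dimension $\theta$ is not justified without further localization.

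The paper's proof is organized around exactly this difficulty and takes a different decomposition. Rather than inducting level by level, it introduces the stopping time $\tau_\delta$ at which \emph{three} particles on adjacent levels first come within $\delta$ of one another. Up to $\tau_\delta$, close particles can be grouped into disjoint \emph{pairs}, and a Girsanov change of measure removes every drift term except the single Bessel drift $(\theta-1)/R$ inside each pair; this is what makes the ``other terms are bounded'' step rigorous and yields weak existence and uniqueness for the system stopped at $\tau_\delta$ (Proposition~\ref{Proposition_uniqueness_integral}). The remaining task is to show $\tau_0=\infty$, i.e.\ that no triple collision ever occurs. This is done not with a logarithmic Lyapunov function but with carefully chosen \emph{quadratic} Lyapunov functions such as $Z=\tfrac12\bigl((Y^{k-1}_i-Y^k_i)^2+(Y^k_{i+1}-Y^{k-1}_i)^2\bigr)$, combined with a Feller-type test (Lemma~\ref{hit0lemma}) after a time change; see Lemmas~\ref{lemma_AB_follows_CDEF} and~\ref{lemma_CDEF}. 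The induction that appears there is over levels, but it is applied to the six specific collision configurations of Figure~\ref{Figure_6events}, not to the global construction of the solution.

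Your outline for $\theta\ge2$ is closer to being complete, and the logarithmic barrier you propose is in the spirit of the standard argument for single-level Dyson Brownian motion; but even there you would need to check the cross terms carefully, and the paper instead deduces $\widehat\tau_0=\infty$ for $\theta\ge2$ directly from the Bessel-of-dimension-$\theta$ representation obtained after the Girsanov change. For $1<\theta<2$ your truncation/tightness/martingale-problem plan is too schematic: the crux is uniqueness in law past the (nonempty) set of collision times, and your final Girsanov step ``on the process stopped when the smallest gap first drops below $1/n$'' faces the same issue as above---several gaps may be small simultaneously, and you have not explained what measure you change to.
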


It would be interesting to extend all the above results to general $\theta>0$. We believe (but we
do not have a proof), that the identification of $Y^{mu}(t)$ with a solution of
\eqref{eq_intDBM_intro} is valid for any $\theta>1$ and that the identification of the projection
of $Y^{mu}(t)$ on level $N$ with $\beta=2\theta$ Dyson Brownian motion is valid for any
$\theta>0$. On the other hand, $Y^{mu}(t)$ can not be the solution to \eqref{eq_intDBM_intro} for
$\theta\le 1$. Indeed, we know that when $\theta=1$ the process $Y^{mu}(t)$ coincides with
reflected interlacing Brownian motions, which hints that one should introduce additional local
times terms in \eqref{eq_intDBM_intro}. In addition, the interpretation of the solution to
\eqref{eq_intDBM_intro} as a generalization of the one-dimensional Bessel process to a process in
the Gelfand-Tseitlin cone suggests that the corresponding process for $\theta<1$ is no longer a
semimartingale and should be defined and studied along the lines of \cite[Chapter XI, Exercise
(1.26)]{RY}.

\subsection{Our methods}

Our approach to the construction and study of the discrete approximating process
$X^{multi}_{disc}(s)$ is related to \emph{Jack symmetric polynomials}. Recall that Jack
polynomials $J_{\lambda}(x_1,x_2,\ldots,x_N;\theta)$, indexed by Young diagrams $\lambda$ and a
positive parameter $\theta$, are eigenfunctions of Sekiguchi differential operators (\cite{Sek},
\cite[Chapter VI, Section 10]{M}, \cite[Chapter 12]{For})
$$
 D(u;\theta)=\frac{1}{\prod_{i<j} (x_i-x_j)} \det\left[ x_i^{N-j}\left(x_i\,\frac{\partial}{\partial
 x_i}+(N-j)\theta+u\right)\right]_{i,j=1,2,\ldots,N}.
$$
One can also define $J_{\lambda}(x_1,x_2,\ldots,x_N;\theta)$ as limits of \emph{Macdonald
polynomials} $P_\lambda(\cdot;q,t)$ as $q,t\to 1$ in such a way that $t=q^{\theta}$ (see
\cite{M}). For the special values $\theta=1/2,\,1,\,2$ these polynomials are spherical functions
of Gelfand pairs $O(N)\subset U(N)$, $U(N)\subset U(N)\times U(N)$, $U(2N)\subset Sp(N)$,
respectively, and are also known as \emph{Zonal polynomials} (see e.g. \cite[Chapter 7]{M} and the
references therein). It is known that spherical functions of compact type (corresponding to the
above Gelfand pairs) degenerate to the spherical functions of Euclidian type, which in our case
are related to real symmetric, complex Hermitian and quaternionic Hermitian matrices, respectively
(see e.g.\ \cite[Section 4]{OO-Shifted} and the references therein). In particular, in the case
$\theta=1$ this is a manifestation of the fact that the tangent space to the unitary group $U(N)$
at identity can be identified with the set of Hermitian matrices. Due to all these facts it comes
at no surprise that Hermite $\beta$ ensembles can be obtained as limits of discrete probabilistic
structures related to Jack polynomials with parameter $\theta=\beta/2$.

\medskip

On the discrete level our construction of the multilevel stochastic dynamics is based on a
procedure introduced by Diaconis and Fill \cite{DF}, which has recently been used extensively in
the study of Markov chains on interlacing particle configurations (see e.g. \cite{BF},
\cite{BG_shuf}, \cite{B-Schur}, \cite{BG}, \cite{BC}). The idea is to use \emph{commuting Markov
operators} and conditional independence to construct a multilevel Markov chain with given single
level marginals. In our case these operators can be written in terms of Jack polynomials. In the
limit the commutation relation we use turns into the following statement, which might be of
independent interest.

\medskip

Let $P_N(t;\beta)$ denote the Markov transition operators of the $N$-dimensional Dyson Brownian
Motion of \eqref{eq_Dyson_BM} and let $L^N_{N-1}(\beta)$ denote the Markov transition operator
corresponding to conditioning the $(N-1)$-st level (that is,
$x^{N-1}_1,x^{N-1}_2,\ldots,x^{N-1}_{N-1}$) on the $N$-th level (that is,
$x^N_1,x^N_2,\ldots,x^N_N$) in the Hermite $\beta$ corners process.

\begin{proposition}[Corollary of Theorem \ref{Theorem_restriction_DBM}]
For any $\beta\ge 1$ the links $L^N_{N-1}(\beta)$ given by the stochastic transition kernels \eq
\begin{split}
\frac{\Gamma(N\beta/2)}{\Gamma(\beta/2)^N} \prod_{1\leq i<m\leq N-1} (x^{N-1}_m-x^{N-1}_i)
\prod_{i=1}^{N-1} \prod_{j=1}^N |x^N_j-x^{N-1}_i|^{\beta/2-1} \prod_{1\leq j<n\leq N}
(x^N_n-x^N_j)^{1-\beta}
\end{split}
\en intertwine the semigroups $P_N(t;\beta)$ and $P_{N-1}(t;\beta)$ in the sense that \eq
L^N_{N-1}(\beta) P_N(t;\beta)  = P_{N-1}(t;\beta)L^N_{N-1}(\beta),\quad t\geq0. \en
\end{proposition}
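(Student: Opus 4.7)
The plan is to obtain the intertwining as a direct consequence of Theorem \ref{Theorem_restriction_DBM} by using the multilevel process $Y^{mu}(t)$ to couple the two semigroups. The link $L^N_{N-1}(\beta)$ displayed in the statement is exactly the $\theta$-Gibbs conditional of the level $N{-}1$ variables given the level $N$ variables (with $\theta=\beta/2$), as one sees by integrating out levels $1,\dots,N-2$ in the density of Definition \ref{defbetacorner}: the factor $\prod_{i<j}(x^{N-1}_j-x^{N-1}_i)^{2-\beta}$ from the definition combines with the $(\beta{-}1)$-power Vandermonde produced by Dixon--Anderson on the lower levels to yield the first-power Vandermonde visible in the statement, and dividing the resulting joint density by the $\beta$-Hermite marginal on level $N$ produces the $\prod(x^N_n-x^N_j)^{1-\beta}$ factor.

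Fix $\theta=\beta/2\ge 1/2$ and let $\mu_0$ be an arbitrary (sufficiently regular) probability measure on the open Weyl chamber at level $N$. Consider the initial distribution on $\overline{\mathcal{G}^N}$ obtained by sampling $x^N\sim\mu_0$ and then, conditionally on $x^N$, sampling the lower levels from the $\theta$-Gibbs conditional law; in particular the initial conditional law of $x^{N-1}$ given $x^N$ is $L^N_{N-1}(\beta)$. Since $\theta\ge 1/2$, Theorem \ref{Theorem_restriction_DBM} applies and supplies two facts. First, the level-$k$ restriction of $Y^{mu}(t)$ is a $k$-dimensional $\beta$-Dyson Brownian motion for every $1\le k\le N$; in particular the level $N$ marginal at time $t$ is $\mu_0 P_N(t;\beta)$, while the level $N{-}1$ marginal at time $t$ equals $\bigl(\mu_0 L^N_{N-1}(\beta)\bigr)P_{N-1}(t;\beta)$, by Markov evolution from its initial marginal $\mu_0 L^N_{N-1}(\beta)$. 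Second (this is the strengthened statement referenced in Corollary \ref{cor_zero_initial}), the $\theta$-Gibbs property is propagated in time, so at every $t\ge 0$ the conditional law of level $N{-}1$ given level $N$ is still $L^N_{N-1}(\beta)$. Marginalizing out $x^N$ at time $t$ then gives the same level $N{-}1$ marginal as $\mu_0 P_N(t;\beta)L^N_{N-1}(\beta)$.

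Equating the two expressions for the level $N{-}1$ marginal at time $t$ and letting $\mu_0$ range over a measure-separating family yields
\begin{equation*}
L^N_{N-1}(\beta)\,P_N(t;\beta)=P_{N-1}(t;\beta)\,L^N_{N-1}(\beta),\qquad t\ge 0,
\end{equation*}
in the operator convention adopted in the statement, which is the desired intertwining.

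The genuinely non-routine input is the preservation of the $\theta$-Gibbs property along the multilevel dynamics (together with the Markov property of each single level); once Theorem \ref{Theorem_restriction_DBM} and its Gibbs-preservation companion are available, the intertwining is an immediate bookkeeping consequence. This also explains the restriction $\beta\ge 1$: it comes from the hypothesis $\theta\ge 1/2$ in Theorem \ref{Theorem_restriction_DBM}, and extending the result to $\beta<1$ would require an extension of that theorem to the regime where $Y^{mu}(t)$ is expected to cease to be a semimartingale.
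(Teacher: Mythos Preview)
Your argument is correct and is essentially the approach the paper intends: the statement is labeled a ``Corollary of Theorem \ref{Theorem_restriction_DBM}'' precisely because the intertwining follows by equating the two expressions for the level-$(N{-}1)$ marginal at time $t$, using part (a) (each level is DBM) and part (b) ($\theta$-Gibbs preservation) of that theorem exactly as you do.

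Two minor clean-ups. First, the Gibbs-preservation input you invoke is part (b) of Theorem \ref{Theorem_restriction_DBM} itself, not Corollary \ref{cor_zero_initial}; the latter only treats the zero initial condition. Second, to say that level $N{-}1$ is itself a $\beta$-DBM you need Theorem \ref{Theorem_restriction_DBM}(a) applied to level $N{-}1$, whereas the formal statement of the theorem only names level $N$. This is harmless because the multilevel dynamics $X^{multi}_{disc}$ on levels $1,\dots,N{-}1$ is autonomous (it does not see level $N$) and the restriction of a Jack--Gibbs measure to the first $N{-}1$ levels is again Jack--Gibbs, so the theorem applies with $N$ replaced by $N{-}1$; it is worth saying this explicitly. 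Finally, your identification of $L^N_{N-1}(\beta)$ with the $\theta$-Gibbs link is correct but can be read off directly from the $k=N$ factor in \eqref{eq_theta_conditional} without any Dixon--Anderson integration, since that product already factorizes into one-step conditionals.
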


The latter phenomenon can be subsumed into a general theory of intertwinings for diffusions, which
for example also includes the findings in \cite{STW} and \cite{PS}.

\bigskip

Due to the presence of singular drift terms neither existence, nor uniqueness of the solution of
\eqref{eq_intDBM_intro} is straightforward. In the case of systems of SDEs with singular drift
terms one typically shows the existence and uniqueness of strong solutions by truncating the
singularity first (thus, obtaining a well-behaved system of SDEs) and by proving afterwards that
the solution cannot reach the singularity in finite time using suitable Lyapunov functions, see
e.g.\ \cite[proof of Proposition 4.3.5]{AGZ}. However, for $1<\theta<2$ the solutions of
\eqref{eq_intDBM_intro} \emph{do reach} some of the singularities. In a similar case of the
$\beta$--Dyson Brownian motion \eqref{eq_Dyson_BM} at $0<\beta<1$ the existence and uniqueness
theorem was established in \cite{CL} using multivalued $SDE$s, but we do not know how to use the
techniques of \cite{CL} in our multilevel setting. In addition, due to the intrinsic asymmetry
built into the drift terms the solution of \eqref{eq_intDBM_intro} seems to be beyond the scope of
the processes that can be constructed using Dirichlet forms (see e.g. \cite{PW} for Dirichlet form
constructions of symmetric diffusions with a singular drift at the boundary of their domain and
for the limitations of that method). Instead, by localizing in time, using appropriate Lyapunov
functions, and  by an application of Girsanov's Theorem, we are able to reduce
\eqref{eq_intDBM_intro} to a number of non-interacting Bessel process, for which the existence and
uniqueness of the solution of the corresponding SDE is well-known. This has an additional
advantage over Dirichlet form type constructions, since it allows to establish convergence to the
solution of \eqref{eq_intDBM_intro} via martingale problem techniques, which is how our proof of
Theorem \ref{Theorem_Limit_SDE} goes.

Note also that for $\theta=1$ ($\beta=2$) the interactions in the definition of
$X^{multi}_{disc}(s)$  become local and we studied the convergence of such dynamics to the process
of Warren in \cite{GS}. The proof in \cite{GS} was based on the continuity of a suitable Skorokhod
reflection map. For general values of $\theta$ neither the discrete dynamics, nor the continuous
dynamics can be obtained as the image of an explicitly known process under the Skorokhod
reflection map of \cite{GS}.

\subsection{Further developments and open problems.} It would be interesting to study the asymptotic
behavior of both the discrete and the continuous dynamics as the number of levels $N$ goes to
infinity. There are at least two groups of questions here.

\smallskip

The \emph{global} fluctuations of Dyson Brownian motions as $N\to\infty$ are known to be Gaussian
(see \cite[Section 4.3]{AGZ}); moreover, the limiting covariance structure can be described by the
Gaussian Free Field (see \cite{B-CLT}, \cite{BG-CLT}). In addition, the asymptotic fluctuations of
the Hermite $\beta$ corners processes of Definition \ref{defbetacorner} are also Gaussian and can
be described via the Gaussian Free Field (cf.\ \cite{BG-CLT}). This raises the question of whether
the 3-dimensional global fluctuations of the solution to \eqref{eq_intDBM_intro} are also
asymptotically (as $N\to\infty$) Gaussian and how the limiting covariance structure might look
like. A partial result in this direction was obtained for $\beta=2$ in \cite{BF}.

\smallskip

The \emph{edge} fluctuations (that is, the fluctuations of the rightmost particle as $N\to\infty$)
in the Hermite $\beta$ ensemble given by \eqref{eq_beta_Hermite} can be described via the
$\beta$-Tracy-Widom distribution (see \cite{RRV}). Moreover, in the present article we link the
Hermite $\beta$ ensemble to a certain discrete interacting particle system. This suggests trying
to find the $\beta$-Tracy-Widom distribution in the asymptotics of the edge fluctuations of that
interacting particle system or its simplified versions.

\medskip

\subsection{ Acknowledgements.} We would like to thank Amir Dembo for many fruitful discussions
and, in particular, for suggesting the use of a suitable Girsanov change of measure in the proof of
Theorem \ref{theorem_intDBM}. We also want to thank Alexei Borodin, Grigori Olshanski and Mark
Adler for helpful comments. V.G. was partially supported by RFBR-CNRS grant 11-01-93105.

\section{Discrete space dynamics via Jack polynomials} \label{sec:disc}

\subsection{Preliminaries on Jack polynomials}

Our notations generally follow the ones in \cite{M}. In what follows $\Lambda^N$ is the algebra of
symmetric polynomials in $N$ variables. In addition, we let $\Lambda$ be the algebra of symmetric
polynomials in countably many variables, or \emph{symmetric functions}. An element of $\Lambda$ is
a formal symmetric power series of bounded degree in the variables $x_1,x_2,\dots$. One way to
view $\Lambda$ is as an algebra of polynomials in Newton power sums $p_k=\sum_i (x_i)^k$. There
exists a unique canonical projection $\pi_N:\,\Lambda\to\Lambda_N$, which sets all variables
except for $x_1,x_2,\ldots,x_N$ to zero (see \cite[Chapter 1, Section 2]{M} for more details).

\medskip

A partition of size $n$, or a \emph{Young diagram} with $n$ boxes, is a sequence of non-negative
integers $\lambda_1\ge\lambda_2\ge\ldots\ge 0$ such that $\sum_i \lambda_i=n$. $|\lambda|$ stands
for the number of boxes in $\lambda$ and $\ell(\lambda)$ is the number of non-empty rows in
$\lambda$ (that is, the number of non-zero sequence elements $\lambda_i$ in $\lambda$). Let $\Y$
denote the set of all Young diagrams, also let $\Y^N$ denote the set of all Young diagrams
$\lambda$ with at most $N$ rows, i.e.\ such that $\lambda_{N+1}=0$. Typically, we will use the
symbols $\lambda$, $\mu$ for Young diagrams. We adopt the convention that the empty Young diagram
$\emptyset$ with $|\emptyset|=0$ also belongs to $\Y$ and $\Y^N$. For a box $\,\square=(i,j)$ of a
Young diagram $\lambda$ (that is, a pair $(i,j)$ such that $\lambda_i\ge j$), $a(i,j)$ and $l(i,j)$
are its arm and leg lengths:
$$
 a(i,j)=\lambda_i-j,\quad l(i,j)=\lambda'_j-i,
$$
where $\lambda'_j$ is the row length in the \emph{transposed} diagram $\lambda'$ defined by
$$
 \lambda'_j=|\{i:\lambda_i\ge j\}|.
$$
Further, $a'(i,j)$, $l'(i,j)$ are co-arm and co-leg lengths:
$$
 a'(i,j)=j-1,\quad l'(i,j)=i-1.
$$

\smallskip

We write $\J_\lambda(\,\cdot;\;\theta)$ for \emph{Jack polynomials}, which are indexed by Young diagrams $\lambda$
and positive reals $\theta$.
Many facts about these polynomials can be found in \cite[Chapter VI, Section 10]{M}. Note however
that in that book Macdonald uses the parameter $\alpha$ given by our $\theta^{-1}$. We use
$\theta$, following \cite{KOO}. $\J_\lambda$ can be viewed either as an element of the algebra
$\Lambda$ of symmetric functions in countably many variables $x_1,x_2,\ldots$, or (specializing
all but finitely many variables to zeros) as a symmetric polynomial in $x_1,x_2,\ldots,x_N$ from
the algebra $\Lambda^N$. In both interpretations the leading term of $\J_\lambda$ is given by
$x_1^{\lambda_1} x_2^{\lambda_2}\cdots x_{\ell(\lambda)}^{\lambda_{\ell(\lambda)}}$. When $N$ is
finite, the polynomials $\J_\lambda(x_1,\dots,x_N;\theta)$ are known to be the eigenfunctions of
the Sekiguchi differential operator:
\begin{multline}
\label{eq_Sekiguchi}
 \frac{1}{\prod_{i<j} (x_i-x_j)} \det\left[ x_i^{N-j}\left(x_i\,\frac{\partial}{\partial
 x_i}+(N-j)\theta+u\right)\right]_{i,j=1,2,\ldots,N} \J_\lambda(x_1,\dots,x_N;\theta)\\=\left(\prod_{i=1}^N (\lambda_i + (N-i)\theta +u)\right) \J_\lambda(x_1,\dots,x_N;\theta) .
\end{multline}
The eigenrelation \eqref{eq_Sekiguchi} can be taken as a definition for the Jack polynomials.
 We also need
dual polynomials $\JQ_\lambda$ which differ from $\J_\lambda$ by an \emph{explicit} multiplicative
constant:
$$
 \JQ_\lambda=\J_\lambda\cdot\prod_{\square\in\lambda} \frac{a(\square)+\theta l(\square)+\theta}{a(\square)+\theta
 l(\square)+1}.
$$

\smallskip

Next, we recall that \emph{skew Jack polynomials} $\J_{\lambda/\mu}$ can be defined as follows. Take two infinite sets
of variables $x$ and $y$ and consider a Jack polynomial $\J_\lambda(x,y;\theta)$. In particular, the latter is a symmetric
polynomial in the $x$ variables. The coefficients $\J_{\lambda/\mu}(y;\theta)$ in its decomposition in the linear basis
of Jack polynomials in the $x$ variables are symmetric polynomials in the $y$ variables and are referred to as skew Jack polynomials:
\begin{equation}
\label{eq_Skew_Jack}
 \J_\lambda(x,y;\theta) = \sum_{\mu} \J_{\mu}(x;\theta) \J_{\lambda/\mu}(y;\theta).
\end{equation}
Similarly, one writes
$$
 \JQ_\lambda(x,y;\theta) = \sum_{\mu} \JQ_{\mu}(x;\theta) \JQ_{\lambda/\mu}(y;\theta).
$$

Throughout the article the parameter $\theta$ remains fixed and, thus, we will often omit it from
the notations writing simply $\J_\lambda(x)$, $\JQ_\lambda(x)$, $\J_{\lambda/\mu}(x)$,
$\JQ_{\lambda/\mu}(x)$.

\smallskip

A \emph{specialization} $\rho$ is an algebra homomorphism from $\Lambda$ to the set of complex numbers.
We call the specialization $\rho$ which maps a polynomial to its free term the \emph{empty}
specialization. A specialization is called Jack-positive if its values on all (skew) Jack polynomials with a
fixed parameter $\theta>0$ are real and non-negative. The following statement gives a classification of all
Jack-positive specializations.

\begin{proposition}[\cite{KOO}] For any fixed $\theta>0$, Jack-positive specializations can be parameterized by triplets $(\alpha,\beta,\gamma)$,
where $\alpha$, $\beta$ are sequences of real numbers with
$$\alpha_1\ge\alpha_2\ge\ldots\ge 0,\quad \beta_1\ge\beta_2\ge\ldots\ge 0, \quad \sum_i
(\alpha_i+\beta_i)<\infty
$$
and $\gamma$ is a non-negative real number. The specialization corresponding to a triplet $(\alpha,\beta,\gamma)$ is given by its
values on Newton power sums $p_k$, $k\geq1$:
\begin{eqnarray*}
&&p_1\mapsto p_1(\alpha,\beta,\gamma)= \gamma+\sum_i (\alpha_i+\beta_i), \\
&&p_k\mapsto p_k(\alpha,\beta,\gamma)= \sum_i \alpha_i^k + (-\theta)^{k-1} \sum_i \beta_i^k, \quad k\ge 2.
\end{eqnarray*}
\end{proposition}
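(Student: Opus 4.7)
The plan is to prove the two directions separately. For the easy direction that every triplet $(\alpha,\beta,\gamma)$ gives a Jack-positive specialization, the main tool is the skew Jack expansion \eqref{eq_Skew_Jack}: concatenating variable sets makes the set of Jack-positive specializations closed under pointwise products, and reapplying the same expansion to a skew Jack polynomial yields skew positivity. It then suffices to verify three elementary cases. A single $\alpha$-variable sends $p_k\mapsto \alpha^k$ and is Jack-positive because $\J_\lambda$ has nonnegative coefficients in the monomial basis; a single $\beta$-variable sends $p_k\mapsto (-\theta)^{k-1}\beta^k$ and is handled via the $\omega_\theta$-involution exchanging $\J_\lambda(\cdot;\theta)$ with a scalar multiple of $\J_{\lambda'}(\cdot;\theta^{-1})$; and the pure Plancherel component $p_1\mapsto\gamma$, $p_k\mapsto 0$ for $k\ge 2$, is obtained as a $\gamma=N\alpha$, $\alpha\downarrow 0$, $N\uparrow\infty$ limit of $(\alpha,0,0)$-specializations, for which positivity on every fixed $\J_\lambda$ passes to the limit since the value is a polynomial in $\alpha$ and $N$.

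For the converse, I would exploit a Choquet-theoretic structure on the set of Jack-positive specializations. After normalizing $\rho(p_1)=1$, each such $\rho$ determines a coherent family of probability measures on $\{\lambda\in\Y:\,|\lambda|=n\}$ via
\[
 M_n^\rho(\lambda) \;\propto\; \J_\lambda(\rho;\theta)\,\JQ_\lambda(1^n;\theta),
\]
and coherence under the Jack-weighted down-transitions is exactly the skew Jack--Pieri rule. A Vershik--Kerov type reconstruction argument shows that this correspondence is a bijection between Jack-positive specializations and coherent families, turning the set of specializations into a Choquet simplex whose extremals are the ergodic coherent families. One is thus reduced to classifying the latter.

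To identify the ergodic families with triplets, I would apply the asymptotic/ergodic method of Vershik--Kerov--Okounkov--Olshanski. For an ergodic $\rho$, the rescaled row lengths $\lambda_i/n$ should converge $M_n^\rho$-a.s.\ to $\alpha_i$, the rescaled column lengths $\theta\lambda'_j/n$ to $\beta_j$, and the residue $\gamma=1-\sum_i(\alpha_i+\beta_i)$ should capture the delocalized bulk. The bridge back to Newton power sums is an asymptotic identity of the form
\[
 p_k(\rho) \;=\; \lim_{n\to\infty}\, n^{-k+1}\!\!\sum_{(i,j)\in\lambda}\!\bigl((j-1)-\theta(i-1)\bigr)^{k-1},\qquad k\ge 2,
\]
valid $M_n^\rho$-a.s.\ on an ergodic family; combining this with the limits above matches the claimed formula $p_k(\rho)=\sum_i\alpha_i^k+(-\theta)^{k-1}\sum_j\beta_j^k$, and the $k=1$ case is forced by the normalization. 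The hard part is precisely this law-of-large-numbers statement for content power sums on an ergodic coherent family; its proof in \cite{KOO} relies on the algebra of Jack shifted-symmetric functions together with variance estimates on normalized Jack character values and a Borel--Cantelli-type argument. The Choquet setup and the easy direction are essentially formal consequences of the combinatorics of Jack polynomials, so the entire weight of the theorem concentrates on this ergodic/asymptotic input.
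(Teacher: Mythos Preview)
The paper does not prove this proposition at all: it is stated with a citation to \cite{KOO} and used as a black box, so there is no ``paper's own proof'' to compare against. Your outline is a reasonable high-level summary of the strategy actually carried out in \cite{KOO}, with the easy direction handled by closure under products plus three atomic cases, and the hard direction reduced to an ergodic/boundary classification of coherent systems on the Jack-deformed Young graph.

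A couple of minor imprecisions in your sketch are worth flagging. First, your proposed formula $M_n^\rho(\lambda)\propto \J_\lambda(\rho)\,\JQ_\lambda(1^n)$ is not quite the right object: the link between Jack-positive specializations and coherent families in \cite{KOO} goes through harmonic functions on the Young graph with Jack edge multiplicities (the down-transitions are weighted by $\psi_{\lambda/\mu}$, not by $\JQ_\lambda(1^n)$), and the identification with specializations passes through the algebra of shifted-symmetric functions rather than a direct Jack-measure ansatz. Second, in the easy direction, nonnegativity of \emph{skew} Jack values at a single $\alpha$-variable is not merely ``$\J_\lambda$ has nonnegative monomial coefficients'' but rather the explicit branching-rule formula for $\psi_{\lambda/\mu}$, and similarly the $\beta$-case uses the dual Pieri coefficients $\psi'_{\lambda/\mu}$; both are known to be nonnegative products of arm/leg factors, which is what actually does the work. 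These are refinements rather than gaps: the architecture you describe is the one in the cited reference.
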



\begin{rmk} If all parameters of the specialization are taken to be zero, then we arrive at the empty
specialization. \end{rmk}

We prepare the following explicit formula for Jack-positive specializations for future use.

\begin{proposition}[{\cite[Chapter VI, (10.20)]{M}}]
\label{proposition_alpha}
 Consider the Jack--positive specialization $a^N$ with $\alpha_1=\alpha_2=\ldots=\alpha_N=a$ and
 all other parameters set to zero. We have
 $$
  \J_\lambda(a^N)=\begin{cases} a^{|\lambda|}\prod_{\square\in\lambda} \dfrac{N\theta+a'(\square)-\theta
  l'(\square)}{a(\square)+\theta l(\square) +\theta} & \mathrm{if\;}\ell(\lambda)\le N,\\
  0&\mathrm{otherwise.}
  \end{cases}
$$
\end{proposition}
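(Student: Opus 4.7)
The strategy is to reduce to $a=1$ by homogeneity and then recognize the evaluation $\J_\lambda(1^N;\theta)$ as the $q\to 1$ limit of a classical principal-specialization formula for Macdonald polynomials. Since $\J_\lambda$ is homogeneous of degree $|\lambda|$ in its variables, the specialization $\alpha_1=\cdots=\alpha_N=a$ (all other parameters zero) factors as $\J_\lambda(a^N;\theta)=a^{|\lambda|}\J_\lambda(1^N;\theta)$, so it suffices to evaluate at $a=1$. For the vanishing when $\ell(\lambda)>N$, I would use the unitriangularity of $\J_\lambda$ in the monomial basis with respect to the dominance order: writing $\J_\lambda=\sum_{\mu\le\lambda} c_{\lambda\mu} m_\mu$, the equivalence $\mu\le\lambda \Leftrightarrow \mu'\ge\lambda'$ yields $\ell(\mu)=\mu'_1\ge\lambda'_1=\ell(\lambda)>N$ for every $\mu$ in the expansion, so that each $m_\mu(x_1,\dots,x_N)$ vanishes identically.

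For the main case $\ell(\lambda)\le N$, I would start from Macdonald's identity (Macdonald VI.6.11$'$)
\[
 P_\lambda(1,t,t^2,\dots,t^{N-1};\,q,t)\;=\;t^{n(\lambda)}\prod_{\square\in\lambda}\frac{1-q^{a'(\square)}t^{N-l'(\square)}}{1-q^{a(\square)}t^{l(\square)+1}},
\]
specialize $t=q^\theta$, and let $q\to 1$. Under this degeneration the specialization point $x_i=q^{\theta(i-1)}$ collapses to $(1,1,\dots,1)$, each ratio $(1-q^\alpha)/(1-q)$ tends to $\alpha$, the prefactor $t^{n(\lambda)}\to 1$, and the Macdonald polynomial $P_\lambda(\,\cdot\,;q,q^\theta)$ converges to the Jack polynomial $\J_\lambda(\,\cdot\,;\theta)$ in the normalization of the excerpt (monic leading monomial $m_\lambda$). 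The identity collapses to $\J_\lambda(1^N;\theta)=\prod_{\square\in\lambda}(N\theta+a'(\square)-\theta l'(\square))/(a(\square)+\theta l(\square)+\theta)$, and reinstating the homogeneity factor $a^{|\lambda|}$ gives the claim.

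The main obstacle is matching normalizations: one must verify that the paper's $\J_\lambda$ really coincides with the $q\to 1$ limit of $P_\lambda(\,\cdot\,;q,q^\theta)$ rather than with $\JQ_\lambda$ or another classical normalization, a point fixed by the monic-leading-term convention stated just after \eqref{eq_Sekiguchi}. A self-contained alternative that avoids Macdonald polynomials is induction on $|\lambda|$ via the Jack Pieri rule for multiplication by $p_1$: evaluating at $1^N$ yields the linear recurrence $N\cdot\J_\lambda(1^N)=\sum_{\mu=\lambda+\square} c_{\lambda,\mu}\J_\mu(1^N)$ with explicit Pieri coefficients, and verifying that the conjectured product satisfies this recurrence reduces to a rational identity in the arm, leg, co-arm, and co-leg statistics of adjacent Young diagrams.
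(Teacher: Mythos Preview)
The paper does not prove this proposition at all: it is stated with a citation to Macdonald's book \cite[Chapter VI, (10.20)]{M} and no argument is given. So there is no ``paper's own proof'' to compare against.

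Your derivation is correct and is in fact the standard route to this identity. The homogeneity reduction to $a=1$ is immediate, and the vanishing for $\ell(\lambda)>N$ via the monomial expansion and the conjugation-reversal of dominance order is clean. The main step---degenerating Macdonald's principal specialization $P_\lambda(1,t,\dots,t^{N-1};q,t)$ at $t=q^\theta$ and sending $q\to1$---is exactly how Macdonald himself obtains (10.20) in Chapter VI, Section 10 (the Jack section is largely a specialization of the preceding Macdonald-polynomial theory). Your caution about normalization is well placed but resolves as you say: the paper's $\J_\lambda$ is monic in $m_\lambda$, which matches Macdonald's $P_\lambda$, and the limit $P_\lambda(\cdot;q,q^\theta)\to\J_\lambda(\cdot;\theta)$ holds coefficientwise in this normalization. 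The Pieri-rule alternative you sketch is also valid and is closer in spirit to how some of the other Jack identities in the paper (e.g.\ Proposition~\ref{Prop_sumpoisson}) are handled, though it is more laborious here.
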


Taking the limit $N\to\infty$ of specializations $\left(\frac{s}{N}\right)^N$ of Proposition
\ref{proposition_alpha} we obtain the following.

\begin{proposition}
\label{proposition_plancherel}
 Consider the Jack-positive specialization $\mathfrak{r}_s$ with $\gamma=s$ and
 all other parameters set to zero. We have
 $$
  \J_\lambda(\mathfrak{r}_s)=s^{|\lambda|} \theta^{|\lambda|} \prod_{\square\in\lambda} \frac{1}{a(\square)+\theta l(\square) +\theta}.
 $$
\end{proposition}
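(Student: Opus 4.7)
The strategy, indicated by the sentence preceding the statement, is to obtain $\J_\lambda(\mathfrak{r}_s)$ as the $N\to\infty$ limit of the explicit formula of Proposition \ref{proposition_alpha} applied to $a=s/N$. My plan has two parts: first, compute the limit of the right-hand side of that formula; second, verify that this limit indeed equals $\J_\lambda(\mathfrak{r}_s)$.

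For the first part, I substitute $a=s/N$ into Proposition \ref{proposition_alpha}. Since $\lambda$ is fixed, for all $N\ge \ell(\lambda)$ one has
\[
\J_\lambda\bigl((s/N)^N\bigr) = (s/N)^{|\lambda|} \prod_{\square\in\lambda} \frac{N\theta + a'(\square) - \theta l'(\square)}{a(\square) + \theta l(\square) + \theta}.
\]
Since the co-arm and co-leg lengths of boxes of $\lambda$ are bounded by constants depending only on $\lambda$, each numerator factor equals $N\theta\bigl(1+O(1/N)\bigr)$, so the numerator product is asymptotic to $(N\theta)^{|\lambda|}$. The $N$-dependence cancels against the prefactor $(s/N)^{|\lambda|}$, giving
\[
\lim_{N\to\infty} \J_\lambda\bigl((s/N)^N\bigr) = s^{|\lambda|}\theta^{|\lambda|}\prod_{\square\in\lambda} \frac{1}{a(\square) + \theta l(\square) + \theta},
\]
which is exactly the claimed formula.

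For the second part, I must identify this limit with $\J_\lambda(\mathfrak{r}_s)$. Since a specialization $\rho$ is an algebra homomorphism $\Lambda\to\mathbb{C}$ and $\J_\lambda$ is a polynomial in the finitely many Newton power sums $p_1,\dots,p_{|\lambda|}$, the value $\rho(\J_\lambda)$ depends polynomially, hence continuously, on the numbers $\rho(p_1),\dots,\rho(p_{|\lambda|})$. A direct computation gives $p_k\bigl((s/N)^N\bigr)=N(s/N)^k=s^k N^{1-k}$, which tends to $s$ for $k=1$ and to $0$ for $k\ge 2$. These limits match the values $p_k(\mathfrak{r}_s)$ read off from the classification of Jack-positive specializations with $\gamma=s$ and all other parameters zero. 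Consequently $\J_\lambda\bigl((s/N)^N\bigr)\to \J_\lambda(\mathfrak{r}_s)$, which completes the identification.

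I do not anticipate any serious obstacle. The only points requiring care are that the boxwise asymptotic expansion is uniform because $\lambda$ has only finitely many boxes, and that pointwise convergence of specializations on the power-sum generators transfers automatically to any Jack polynomial because the latter lies in the subalgebra of $\Lambda$ generated by finitely many $p_k$.
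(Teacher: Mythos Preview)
Your proof is correct and follows exactly the approach the paper indicates in the sentence preceding the proposition: take the limit $N\to\infty$ of Proposition \ref{proposition_alpha} with $a=s/N$. The paper leaves the details implicit, and you have filled them in appropriately, including the justification that convergence of the power-sum values $p_k((s/N)^N)\to p_k(\mathfrak{r}_s)$ implies convergence of $\J_\lambda$ on these specializations.
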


\begin{rmk} The specialization with finitely many equal parameters $\beta_i$ and all other
parameters set to zero also admits an explicit formula, but we do not need it here.
\end{rmk}

\subsection{Probability measures related to Jack polynomials} \label{Section_limit_at_fixed_time}

We start with the definition of Jack probability measures.

\begin{definition}
\label{def_Jack_mes} Given two Jack-positive specializations $\rho_1$ and $\rho_2$, the Jack
probability measure $\JM_{\rho_1;\rho_2}$ on $\Y$ is defined through
$$
   \JM_{\rho_1;\rho_2}(\lambda) = \dfrac{\J_\lambda(\rho_1)
   \JQ_\lambda(\rho_2)}{H_\theta(\rho_1;\rho_2)}
$$
 with the normalization constant being given by
$$
 H_{\theta}(\rho_1;\rho_2)=\exp\Big(\sum_{k=1}^{\infty} \frac{\theta}{k}\,p_k(\rho_1)p_k(\rho_2)\Big).
$$
\end{definition}

\begin{rmk}
The above definition makes sense only if $\rho_1$, $\rho_2$ are such that
$$
  \sum_\lambda \J_\lambda(\rho_1) \JQ_\lambda(\rho_2)<\infty,
$$
in which case the latter sum equals to the expression for $H_{\theta}(\rho_1;\rho_2)$ above due to
the Cauchy-type identity for Jack polynomials (see e.g. \cite[(10.4), Chapter VI, Section 10]{M}).
\end{rmk}
\begin{rmk}
 The construction of probability measures via specializations of symmetric polynomials
 was originally suggested by Okounkov in the context of \emph{Schur measures} \cite{Ok}. Recently
 similar constructions for more general polynomials have lead to many interesting results starting
 from the paper \cite{BC} by Borodin and Corwin. We refer to \cite[Introduction]{BCGS} for the
 chart of probabilistic objects which are linked to various degenerations of \emph{Macdonald
 polynomials}.
\end{rmk}

\smallskip

The following statement is a corollary of Propositions \ref{proposition_alpha} and
\ref{proposition_plancherel}.
\begin{proposition} \label{Proposition_prelimit_one_level}
Take specializations $1^N$ and $\mathfrak{r}_s$ of Propositions \ref{proposition_alpha} and
\ref{proposition_plancherel}. Then, $\JM_{1^N;\mathfrak{r}_s}(\lambda)$ vanishes unless
$\lambda\in\Y^N$ and in the latter case we have
\begin{equation}
\label{eq_prelimit_density}
 \JM_{1^N;\mathfrak{r}_s}(\lambda)=\exp(-\theta s N) s^{|\lambda|}\theta^{|\lambda|} \prod_{\square\in\lambda}
 \frac{N\theta+a'(\square)-\theta
  l'(\square)}{(a(\square)+\theta l(\square) +\theta)
 (a(\square)+\theta
 l(\square)+1)}.
\end{equation}
\end{proposition}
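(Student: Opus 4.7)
The proof is a direct computation unpacking Definition \ref{def_Jack_mes} for the two specializations $\rho_1=1^N$ and $\rho_2=\mathfrak{r}_s$, so my plan is to compute each of the three pieces $\J_\lambda(1^N)$, $\JQ_\lambda(\mathfrak{r}_s)$, and $H_\theta(1^N;\mathfrak{r}_s)$ separately and then assemble them.

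First, applying Proposition \ref{proposition_alpha} with $a=1$, one gets
$$\J_\lambda(1^N)=\prod_{\square\in\lambda}\frac{N\theta+a'(\square)-\theta l'(\square)}{a(\square)+\theta l(\square)+\theta}$$
if $\ell(\lambda)\le N$ and $0$ otherwise. This vanishing accounts precisely for the statement that $\JM_{1^N;\mathfrak{r}_s}$ is supported on $\Y^N$, so from this point on one restricts to $\lambda\in\Y^N$.

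Next, by Proposition \ref{proposition_plancherel},
$$\J_\lambda(\mathfrak{r}_s)=s^{|\lambda|}\theta^{|\lambda|}\prod_{\square\in\lambda}\frac{1}{a(\square)+\theta l(\square)+\theta},$$
and multiplying by the explicit conversion factor from $\J_\lambda$ to $\JQ_\lambda$ given just after Proposition \ref{proposition_alpha} produces a telescoping cancellation in the hook-type factors, yielding
$$\JQ_\lambda(\mathfrak{r}_s)=s^{|\lambda|}\theta^{|\lambda|}\prod_{\square\in\lambda}\frac{1}{a(\square)+\theta l(\square)+1}.$$
For $H_\theta(1^N;\mathfrak{r}_s)$ I would use the description of Jack-positive specializations on power sums: $p_k(1^N)=N$ for every $k\ge 1$, while for $\mathfrak{r}_s$ (where only $\gamma=s$ is nonzero) one has $p_1(\mathfrak{r}_s)=s$ and $p_k(\mathfrak{r}_s)=0$ for $k\ge 2$. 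Only the $k=1$ term in the defining series for $H_\theta$ survives, giving $H_\theta(1^N;\mathfrak{r}_s)=\exp(\theta Ns)$.

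Substituting the three pieces into $\JM_{1^N;\mathfrak{r}_s}(\lambda)=\J_\lambda(1^N)\JQ_\lambda(\mathfrak{r}_s)/H_\theta(1^N;\mathfrak{r}_s)$ gives exactly \eqref{eq_prelimit_density} after combining the two denominator products into a single product over boxes of $\lambda$. There is no real obstacle here — the only thing to be careful about is the distinction between $\J_\lambda$ and $\JQ_\lambda$ in the definition of the Jack measure and the corresponding cancellation of one of the two hook-type products; once that bookkeeping is correct, the formula drops out. The exponential prefactor $\exp(-\theta sN)$ comes purely from $1/H_\theta$, confirming the claim.
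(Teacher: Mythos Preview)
Your proof is correct and is exactly the intended argument: the paper states the proposition simply as a corollary of Propositions \ref{proposition_alpha} and \ref{proposition_plancherel} without spelling out the details, and what you have written is precisely the direct computation those two propositions (together with the definition of $\JQ_\lambda$ and of $H_\theta$) yield. The only trivial inaccuracy is bibliographic: the conversion factor between $\J_\lambda$ and $\JQ_\lambda$ is recalled in the preliminaries paragraph preceding, not following, Proposition \ref{proposition_alpha}.
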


Next, we consider limits of the measures $\JM_{1^N;\mathfrak{r}_s}$ under a diffusive rescaling of
$s$ and $\lambda$. Define the open  Weyl chamber ${\mathcal{W}^N}=\{y\in\rr^N:\;y_1< y_2 <\ldots<
y_N\}$ and let $\overline{\mathcal{W}^N}$ be its closure.

\begin{proposition} \label{proposition_convergence_fixed_time} Fix some $N\in\nn$. Then, under the rescaling
$$
 s= \eps^{-1} \frac{t}{\theta}, \quad \lambda_i= \eps^{-1}t+ \eps^{-1/2} y_{N+1-i},\quad i=1,2,\ldots,N,
$$
the measures $\JM_{1^N;\mathfrak{r}_s}$ converge weakly in the limit $\eps\to0$ to the probability
measure with density \eq\label{eq_beta_Hermite_density} \frac{1}{Z} \prod_{i<j} (y_j-y_i)^{2\theta}
\prod_{i=1}^N \exp\left(- \frac{y_i^2}{2t}\right) \en on the closed Weyl chamber
$\overline{\mathcal{W}^N}$, where \eq\label{whatisZ} Z=t^{\theta
\frac{N(N-1)}{2}+\frac{N}2}(2\pi)^{N/2}\prod_{j=1}^{N} \frac{\Gamma(j\theta)}{\Gamma(\theta)}. \en
\end{proposition}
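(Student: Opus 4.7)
By Proposition~\ref{Proposition_prelimit_one_level} the pre-limit measure is supported on $\Y^N$ with explicit density \eqref{eq_prelimit_density}. The plan is to substitute $\lambda_i = \eps^{-1}t + \eps^{-1/2}y_{N+1-i}$ and $s = \eps^{-1}t/\theta$, multiply by the lattice-to-continuum Jacobian $\eps^{-N/2}$ (since the unit lattice spacing in each $\lambda_i$ becomes spacing $\eps^{1/2}$ in each $y_i$), and prove pointwise convergence of the resulting rescaled density on the open Weyl chamber $\mathcal W^N$ to the right-hand side of \eqref{eq_beta_Hermite_density}. Weak convergence on $\overline{\mathcal W^N}$ will then follow by interpreting the sum over $\lambda$ as a Riemann sum and combining pointwise density convergence with a uniform Gaussian tail bound; alternatively, Scheffé's lemma applied to a suitable smoothing gives the same conclusion.

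The main algebraic step is to express all three products over boxes in \eqref{eq_prelimit_density} as products of Gamma functions in the variables $\lambda_1,\ldots,\lambda_N$. A row-by-row evaluation of the numerator gives
\[
\prod_{\square\in\lambda}\bigl(N\theta + a'(\square)-\theta l'(\square)\bigr) = \prod_{i=1}^{N}\frac{\Gamma(\lambda_i+(N-i+1)\theta)}{\Gamma((N-i+1)\theta)},
\]
while, with the convention $\lambda_{N+1}=0$, the denominator factors satisfy
\[
\prod_{\square\in\lambda}(a+\theta l+\theta)(a+\theta l+1) = \prod_{1\le i\le j\le N}\frac{\Gamma(\lambda_i-\lambda_{j+1}+(j-i+1)\theta)\,\Gamma(\lambda_i-\lambda_{j+1}+(j-i)\theta+1)}{\Gamma(\lambda_i-\lambda_j+(j-i+1)\theta)\,\Gamma(\lambda_i-\lambda_j+(j-i)\theta+1)},
\]
an identity which one verifies by induction on $|\lambda|$ or deduces from the standard principal-specialization formula for $\J_\lambda(1^N;\theta)$.

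The heart of the argument consists of applying Stirling's formula $\log\Gamma(z)=(z-\tfrac12)\log z - z + \tfrac12\log(2\pi) + O(z^{-1})$ to each Gamma factor and tracking contributions by scale in $\eps$. Writing $\lambda_i = \eps^{-1}t + \eps^{-1/2}\mu_i$ with $\mu_i=y_{N+1-i}$, the $O(\eps^{-1})$ and $O(\eps^{-1/2})$ divergences produced by $\Gamma(\lambda_i+(N-i+1)\theta)$ and by the $j=N$ pieces of the denominator (where $\lambda_{N+1}=0$ yields arguments of order $\eps^{-1}$) combine with the prefactor $\exp(-\theta sN)(s\theta)^{|\lambda|}$ to cancel exactly. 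The second-order Taylor remainder $\delta^2/(2z)$ in the Stirling expansion of $\log\Gamma(\eps^{-1}t + \delta)$ then produces the Gaussian weights $\prod_i e^{-y_i^2/(2t)}$. For the pair-wise denominator terms with $i<j<N$, whose arguments are of order $\eps^{-1/2}$, applying $\Gamma(\eps^{-1/2}u+c)/\Gamma(\eps^{-1/2}u+c') \sim (\eps^{-1/2}u)^{c-c'}$ to each Gamma-ratio and summing the exponent differences over the two numerator Gammas attached to the pair $(i,j)$ gives a contribution of $(y_{N+1-j}-y_{N+1-i})^{2\theta}$, producing the Vandermonde factor with exponent $2\theta$. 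The remaining powers of $\eps$ collected in the process combine with the Jacobian $\eps^{-N/2}$ to yield an $\eps$-independent prefactor.

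The surviving prefactor is matched with $1/Z$ of \eqref{whatisZ} most cleanly by invoking the classical Selberg integral, which evaluates $\int_{\overline{\mathcal W^N}}\prod_{i<j}(y_j-y_i)^{2\theta}\prod_i e^{-y_i^2/(2t)}\,dy$ in exactly the closed form \eqref{whatisZ}; together with the pointwise convergence of densities this forces the prefactor to equal $1/Z$. The principal technical difficulty is the bookkeeping in the Stirling step: one must verify that all divergent terms of orders $\eps^{-1}\log\eps^{-1}$, $\eps^{-1}$, $\eps^{-1/2}\log\eps^{-1}$, and $\log\eps^{-1}$ cancel exactly when the prefactor, the row-wise numerator asymptotics, and the pair-wise denominator asymptotics are combined, and one must isolate cleanly the two distinct mechanisms---the quadratic Taylor remainder $\delta^2/(2z)$ on the one hand and the algebraic ratio $(\eps^{-1/2}u)^{c-c'}$ on the other---that produce respectively the Gaussian and the Vandermonde parts of \eqref{eq_beta_Hermite_density}.
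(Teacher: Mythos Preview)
Your approach is essentially the same as the paper's: both start from the explicit formula \eqref{eq_prelimit_density}, rewrite the box-products as Gamma functions, apply Stirling's formula to extract the Gaussian and Vandermonde factors, and use the Selberg integral together with the fact that both sides are probability measures to pin down the normalizing constant. The only difference is organizational---the paper carries out the Stirling analysis row by row (doing the first row explicitly and then stating that the other rows are analogous), whereas you write the full Gamma-product identity for the denominator at once and separate the analysis by the scale of the Gamma argument ($\eps^{-1}$ versus $\eps^{-1/2}$); neither organization is substantively different from the other.
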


\begin{rmk} Note that we have chosen the notation is such a way that the row lengths $\lambda_i$ are non-increasing, while the continuous coordinates $y_i$ are non-decreasing in $i$.
\end{rmk}

\begin{proof}[Proof of Proposition \ref{proposition_convergence_fixed_time}]
We start by observing that \eqref{eq_beta_Hermite_density}, \eqref{whatisZ} define a probability
density, namely that the total mass of the corresponding measure is $1$. Indeed, the computation of
the normalization constant is a particular case of the \emph{Selberg integral} (see \cite{Sel},
\cite{For}, \cite{Meh}). Since $\JM_{1^N;\mathfrak{r}_s}$ is also a probability measure, it
suffices to prove that as $\eps\to 0$
$$
\JM_{1^N;\mathfrak{r}_s}(\lambda)=\eps^{N/2}\,\frac{1}{Z}\prod_{i<j} (y_j-y_i)^{2\theta}
\prod_{i=1}^N \exp\left(-\frac{y_i^2}{2t}\right)(1+o(1))
$$
with the error term $o(1)$ being uniformly small on compact subsets of ${\mathcal{W}^N}$. The
product over boxes in the first row of $\lambda$ in \eqref{eq_prelimit_density} is
\begin{eqnarray*}
\prod_{i=1}^{\lambda_1} (N\theta+i-1) \prod_{i=1}^{N-1} \prod_{j=\lambda_{i+1}+1}^{\lambda_i}
\frac{1}{(\lambda_1-j+\theta (i-1) +\theta)(\lambda_1-j+\theta (i-1)+1)} \\
=\frac{\Gamma(N\theta +\lambda_1)}{\Gamma(N\theta)}
\prod_{i=1}^{N-1} \frac{\Gamma(\lambda_1-\lambda_{i}+i \theta)}{\Gamma(\lambda_1-\lambda_{i+1}+i\theta)}
\frac{\Gamma(\lambda_1-\lambda_{i}+i \theta +1-\theta )}{\Gamma(\lambda_1-\lambda_{i+1}+i\theta+1-\theta)} \\
=\frac{\Gamma(\theta) }{\Gamma(N\theta) \Gamma((N-1)\theta+\lambda_1+1)} \prod_{i=2}^N
\Big( \frac{\Gamma(\lambda_1-\lambda_i+i \theta)}{\Gamma(\lambda_1-\lambda_i+(i-1)\theta)}
\times \frac{\Gamma(\lambda_1-\lambda_{i}+(i-1)\theta +1)}
{\Gamma(\lambda_1-\lambda_{i}+(i-2)\theta+1)} \Big) \\
\sim\frac{\Gamma(\theta)
}{\Gamma(N\theta) \Gamma((N-1)\theta +\lambda_1+1)} \prod_{i=1}^{N-1}
(\eps^{-1/2}(y_N-y_i))^{2\theta},
\end{eqnarray*}
where $A(\eps)\sim B(\eps)$ means here $\lim_{\eps\to 0} \frac{A(\eps)}{B(\eps)}=1$. Further,
\begin{multline*}
\frac{e^{-{t}{\eps^{-1}}}\,t^{\lambda_1}{\eps^{-\lambda_1}}}{\Gamma((N-1)\theta +\lambda_1+1)}
\sim e^{-{t}{\eps^{-1}}} \Big({t}{\eps^{-1}}\Big)^{{t}{\eps^{-1}}+{y_N}{\eps^{-1/2}}+1/2} \\
\quad\quad\quad\quad\quad\quad\quad\quad\quad
\times\frac{1}{\sqrt{2\pi}}\left(\dfrac{(N-1)\theta+{t}{\eps^{-1}}+{y_N}{\eps^{-1/2}}+1}{e}\right)
^{-(N-1)\theta-{t}{\eps^{-1}}-{y_N}{\eps^{-1/2}}-1} \\
\sim \frac{\big({t}{\eps^{-1}}\big)^{1/2-(N-1)\theta-1}}{\sqrt{2\pi}}\,e^{(N-1)\theta+1+{y_N}{\eps^{-1/2}}}
\left(1+\eps^{1/2}\,\frac{y_N}{t}+\eps\,\frac{(N-1)\theta+1}{t}\right)^
{-(N-1)\theta-{t}{\eps^{-1}}-{y_N}{\eps^{-1/2}}-1} \\
\sim \frac{\big({t}{\eps^{-1}}\big)^{1/2-(N-1)\theta+\theta}}{\sqrt{2\pi}} e^{(N-1)\theta+1+{y_N}{\eps^{-1/2}}}
\exp\Big(\Big(\eps^{1/2}\,\frac{y_N}{t}+\eps\,\frac{(N-1)\theta+1}{t}
-\eps\frac{y_N^2}{2\,t^2}\Big)\Big(-{t}{\eps^{-1}}-{y_N}{\eps^{-1/2}}\Big)\Big) \\
=  \eps^{1/2+(N-1)\theta}\,\frac{ t^{-1/2-(N-1)\theta}}{\sqrt{2\pi}}
\exp\left(-\frac{y_N^2}{2\,t}\right).
\end{multline*}
Therefore, the factors coming from the first row of $\lambda$ in \eqref{eq_prelimit_density} are asymptotically given by
\[
  \frac{\Gamma(\theta) }{\Gamma(N\theta)}\,\eps^{1/2}\,\frac{t^{-1/2-(N-1)\theta}}{\sqrt{2\pi}}
    \exp\left(-\frac{y_N^2}{2\,t}\right).  \prod_{i=1}^{N-1} ((y_N-y_i))^{2\theta}.
\]
Performing similar computations for the other rows we get
$$
\JM_{1^N;\mathfrak{r}_s}(\lambda)=\eps^{N/2} \prod_{j=1}^N
\frac{\Gamma(\theta)\,t^{-1/2-(j-1)\theta} }{ \Gamma(j\theta) \sqrt{2\pi}} \prod_{i<j}
(y_j-y_i)^{2\theta} \prod_{i=1}^N \exp\left(-\frac{y_i^2}{2\,t}\right)(1+o(1)),
$$
which finishes the proof.
\end{proof}

\smallskip

We now proceed to the definition of probability measures on multilevel structures associated with Jack polynomials.
Let $\GT^{(N)}$ denote the set of sequences of Young diagrams $\lambda^1,\lambda^2,\ldots,\lambda^N$ such that
$\ell(\lambda^i)\le i$ for every $i$ and the Young diagrams interlace:
$$
 \lambda^{i+1}_1\ge \lambda^i_1\ge \lambda^{i+1}_2\ge\dots\ge\lambda^{i}_i\ge\lambda^{i+1}_{i+1},\quad i=1,2,\ldots,N-1.
$$
We write $\lambda^i\prec \lambda^{i+1}$ for the latter set of inequalities.

\begin{definition}
\label{def_Jack_Gibbs}
 A probability distribution $P$ on arrays $(\lambda^1\prec\dots\prec\lambda^N)\in\GT^{(N)}$ is called
 a \emph{Jack--Gibbs} distribution, if for any $\mu\in\GT_N$, such that $P(\lambda^N=\mu)>0$,
 the conditional distribution of $\lambda^1\dots,\lambda^{N-1}$ given that $\lambda^N=\mu$ is
 \begin{equation}
 \label{eq_Jack_Gibbs}
  P(\lambda^1,\dots,\lambda^{N-1}\mid \lambda^N=\mu)=\frac{\J_{\mu/\lambda^{N-1}}(1)\J_{\lambda^{N-1}/\lambda^{N-2}}(1) \cdots
\J_{\lambda^2/\lambda^1}(1)\J_{\lambda^1}(1)}{J_{\mu}(1^N)}.
 \end{equation}
\end{definition}
\begin{rmk}
 The fact that \eqref{eq_Jack_Gibbs} is a well-defined probability distribution follows from the
 definition of skew Jack polynomials \eqref{eq_Skew_Jack}.
\end{rmk}
\begin{rmk} When $\theta=1$, \eqref{eq_Jack_Gibbs} means that the conditional distribution of $\lambda^1\dots,\lambda^{N-1}$
is \emph{uniform} on the polytope defined by the interlacing conditions.
\end{rmk}

One important example of a Jack--Gibbs measure is given by the following definition, see \cite{BG},
\cite{BP}, \cite{BC}, \cite{BCGS} for a review of related constructions in the context of Schur
and, more generally, Macdonald polynomials.

\begin{definition}
Given a Jack-positive specialization $\rho$, we define the ascending Jack process $\JM^{asc}_{\rho;N}$ as
the probability measure on $\GT^{(N)}$ given by
\begin{equation}
\label{eq_Jack_ascending}
 \JM^{asc}_{\rho;N}(\lambda^1,\lambda^2,\ldots,\lambda^N)
=\dfrac {\JQ_{\lambda^N}(\rho)\J_{\lambda^N/\lambda^{N-1}}(1)\cdots
\J_{\lambda^2/\lambda^1}(1)\J_{\lambda^1}(1)}{H_\theta(\rho;1^N)}.
\end{equation}
\end{definition}

\begin{rmk} The above definition makes sense only if $\rho$ is such that
$$
  \sum_\lambda \J_\lambda(\rho) \JQ_\lambda(
  1^N)<\infty,
$$
in which case the sum equals to $H_{\theta}(\rho;1^N)$ as in Definitions \ref{def_Jack_mes},
\ref{def_Jack_Gibbs}.
\end{rmk}

\begin{rmk} If $\rho$ is the empty specialization, then $\JM^{asc}_{\rho;N}$ assigns mass $1$ to
the single element of $\GT^{(N)}$ such that $\lambda^i_j=0$, $1\leq i\leq j\leq N$.
\end{rmk}

Define (open) Gelfand-Tsetlin cone via
\[
{\mathcal{G}^N}=\left\{y\in\rr^{N(N+1)/2}:\;y^{j-1}_{i-1}< y^j_i< y^{j-1}_i,\;1\leq i\leq j\leq
N\right\},
\]
and let $\overline{\mathcal{G}^N}$ be its closure. A natural continuous analogue of Definition
\ref{def_Jack_Gibbs} is:

\begin{definition}\label{thetaGibbsdef} An absolutely continuous (with respect to the Lebesgue measure)
probability distribution $P$ on arrays $y\in {\mathcal{G}^N}$ is called $\theta$--Gibbs, if the
conditional distribution of the first $N-1$ levels $y^k_i$, $1\leq i \leq k\leq N-1$ given the
$N$th level $y^N_1,\dots,y^N_N$ has density
\begin{multline}
\label{eq_theta_conditional}
 P\left(y^k_i,  1\leq i \leq k\leq N-1 \mid y^N_1,\dots, y^N_N\right)\\=
 \prod_{k=2}^N\left(
\frac{\Gamma(k\theta)}{\Gamma(\theta)^k} \prod_{1\leq i<m\leq k-1} (y^{k-1}_m-y^{k-1}_i)
\prod_{i=1}^{k-1} \prod_{j=1}^k |y^k_j-y^{k-1}_i|^{\theta-1} \prod_{1\leq j<n\leq k}
(y^k_n-y^k_j)^{1-2\theta}\right).
\end{multline}
\end{definition}

\begin{proposition}
 \label{proposition_convergence_to_theta_Gibbs}
Let $P(q)$, $q=1,2,\dots$ be a sequence of Jack--Gibbs measures on $\GT^{(N)}$ and let
$\{\lambda^k_i(q)\}$ be $P(q)$--distributed random element of $\GT^{(N)}$. Suppose that there
exist two sequences $a(q)$ and $b(q)$ such that $\lim_{q\to\infty} b(q)=+\infty$ and as
$q\to\infty$ the $N$--dimensional vector
$$
 \left(\frac{\lambda^N_N-a(q)}{b(q)}, \frac{\lambda^N_{N-1}-a(q)}{b(q)},\dots, \frac{\lambda^N_1-a(q)}{b(q)} \right)
$$
weakly converges to a random vector whose distribution is absolutely continuous with respect to
the Lebesgue measure. Then the full $N(N-1)$--dimensional vector
$$
 \left(\frac{\lambda^k_i-a(q)}{b(q)}\right),\quad 1\le i\le k\le N
$$
also weakly converges and the limit distribution is $\theta$--Gibbs.
\end{proposition}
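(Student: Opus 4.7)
The plan is to exploit the Markov structure of Jack-Gibbs distributions. Iterating Definition \ref{def_Jack_Gibbs} across levels, the conditional law of $\lambda^{N-1},\lambda^{N-2},\ldots,\lambda^1$ given $\lambda^N$ factors as a composition of single-step transition kernels
\[
L^k_{k-1}(\nu\to\lambda) = \frac{\J_{\nu/\lambda}(1)\,\J_\lambda(1^{k-1})}{\J_\nu(1^k)}, \qquad \lambda\prec\nu,\ \lambda\in\Y^{k-1},\ \nu\in\Y^k,
\]
which are honest probability kernels by the branching rule $\J_\nu(1^k)=\sum_{\lambda\prec\nu}\J_{\nu/\lambda}(1)\J_\lambda(1^{k-1})$. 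Consequently, the full distribution of $(\lambda^1,\ldots,\lambda^N)$ equals the top-level marginal composed with these kernels. Combined with the hypothesized weak convergence of the rescaled top level, it suffices to prove that under the diffusive rescaling $\lambda^k_i=a(q)+b(q)\,y^k_{k+1-i}$ each discrete kernel $L^k_{k-1}$ converges, uniformly on compact subsets of the interior of the interlacing polytope, to the corresponding factor in the continuous $\theta$-Gibbs conditional density \eqref{eq_theta_conditional}.

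To establish kernel convergence, I would plug into $L^k_{k-1}$ two explicit product formulas: Proposition \ref{proposition_alpha} for the denominator $\J_\nu(1^k)$ and for the numerator piece $\J_\lambda(1^{k-1})$, together with a Pieri-type product expression for $\J_{\nu/\lambda}(1)$ for the horizontal strip $\nu/\lambda$ (such as \cite[Ch.\ VI, (6.24)]{M}; the interlacing $\lambda\prec\nu$ with $\ell(\lambda)\le k-1$ and $\ell(\nu)\le k$ forces $\nu/\lambda$ to be a horizontal strip). Under the scaling, all differences $\nu_i-\lambda_j$ diverge like $b(q)$, and each product-over-boxes becomes a product of ratios of Gamma functions in these differences. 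Stirling's formula, applied exactly as in the proof of Proposition \ref{proposition_convergence_fixed_time}, converts these Gamma ratios into powers of the limiting coordinate differences $y^k_j-y^{k-1}_i$, $y^{k-1}_m-y^{k-1}_i$ and $y^k_n-y^k_j$; the exponents $\theta-1$, $+1$ and $1-2\theta$ of \eqref{eq_theta_conditional} emerge from the exponents of $b(q)$ in each factor, and the Jacobian $b(q)^{-(k-1)}$ of the change of variables $\lambda^{k-1}\leftrightarrow y^{k-1}$ matches the scaling of Lebesgue measure.

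The overall constant $\Gamma(k\theta)/\Gamma(\theta)^k$ in \eqref{eq_theta_conditional} can either be extracted from the Stirling leading terms or, more cheaply, pinned down after the fact by requiring the limiting kernel to integrate to one on the interlacing polytope; this is precisely the Dixon--Anderson identity mentioned below Definition \ref{defbetacorner}. To promote kernel-plus-top-level convergence to joint weak convergence of the rescaled array, one uses that the limiting $\theta$-Gibbs kernel is integrable on the interior of the Gelfand--Tsetlin cone and that the absolute continuity of the top-level limit guarantees that the top-level particles are almost surely distinct, so no mass concentrates on the boundary faces of the polytope. The main obstacle I expect is the combinatorial bookkeeping in the horizontal-strip Pieri formula: one has to match carefully which box of $\nu$ or of $\lambda$ contributes which Gamma factor so that the various products over pairs $i<m$, $j<n$ and mixed pairs $(i,j)$ are assigned to the correct continuous factors. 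Beyond that, everything reduces to Stirling asymptotics and the Dixon--Anderson evaluation already implicit in the proof of Proposition \ref{proposition_convergence_fixed_time}.
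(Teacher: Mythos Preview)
Your proposal is correct and follows essentially the same route as the paper: verify that \eqref{eq_theta_conditional} is a probability density via the Dixon--Anderson identity, then show that the discrete conditional density \eqref{eq_Jack_Gibbs} converges to it uniformly on compacta in the interior of the interlacing polytope by inserting the explicit product formula for $\J_{\nu/\lambda}(1)$ and taking Gamma-ratio asymptotics. The only cosmetic difference is that the paper works directly with the branching-rule expression \eqref{eq_branching_rule} for the skew values (rewritten via $f(\alpha)=\Gamma(\alpha+1)/\Gamma(\alpha+\theta)\sim\alpha^{1-\theta}$) rather than first splitting into the step kernels $L^k_{k-1}$ and invoking Proposition~\ref{proposition_alpha} separately for $\J_\lambda(1^{k-1})$ and $\J_\nu(1^k)$; the two computations telescope into one another.
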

\begin{proof}
 We show
first that \eqref{eq_theta_conditional} defines a probability measure. To this end, we use a
version of the Dixon-Anderson identity (see \cite{Dixon}, \cite{And}, \cite[Chapter 4]{For}),
which reads \eq\label{eq_DA}
\begin{split}
 \int\int\ldots\int \prod_{1\le i<j \le m} |u_i-u_j| \prod_{i=1}^m \prod_{j=1}^{m+1}
 |u_i-v_j|^{\theta-1}\,\mathrm{d}u_1\,\mathrm{d}u_2\,\ldots\,\mathrm{d}u_m \\
= \frac{\Gamma(\theta)^{m+1}}{\Gamma((m+1)\theta)}\prod_{1\le i <j \le m+1} |v_i-v_j|^{2\theta-1},
\end{split}
\en where the integration is performed over the domain
$$
 v_1<u_1<v_2<u_2<v_3<\ldots<u_m<v_{m+1}.
$$
Applying \eqref{eq_DA} sequentially to integrate the density in \eqref{eq_theta_conditional} with
respect to the variables $y^1_1$, then $y^2_1$, $y^2_2$ and so on, we eventually arrive at $1$.

\medskip

Next, we should check that the quantity in \eqref{eq_Jack_Gibbs} (written in the rescaled
coordinates) converges uniformly on compact subsets of the polytope defined by the interlacing
conditions to \eqref{eq_theta_conditional}. The evaluation of $\J_{\nu/\mu}(1)$ is known as the
\emph{branching rule for Jack polynomials} (see e.g. \cite[(7.14')]{M} or \cite[(2.3)]{OO}) and
reads
\begin{multline}
\label{eq_branching_rule} \psi_{\nu/\mu} := \J_{\nu/\mu}(1) = \prod_{1\le i\le j\le k-1}
 \frac{(\mu_i-\mu_j+\theta(j-i)+\theta)_{\mu_j-\nu_{j+1}}}{(\mu_i-\mu_j+\theta(j-i)+1)_{\mu_j-\nu_{j+1}}}
  \frac{(\nu_i-\mu_j+\theta(j-i)+1)_{\mu_j-\nu_{j+1}}}{(\nu_i-\mu_j+\theta(j-i)+\theta)_{\mu_j-\nu_{j+1}}},
\end{multline}
where $\ell(\nu)\le k$, $\mu\prec\nu$ and we use the Pochhammer symbol notation
$$
 (a)_n= a(a+1)\cdots (a+n-1).
$$
Thus, with the notation $ f(\alpha)=\frac{\Gamma(\alpha+1)}{\Gamma(\alpha+\theta)}$ we have
\[
 \J_{\lambda^{k}/\lambda^{k-1}}(1)= \prod_{1\le i\le j\le k-1} \frac{f(\lambda^{k-1}_i-\lambda^{k-1}_j+\theta(j-i))f(\lambda^k_i-\lambda^k_{j+1}+\theta(j-i))}{
 f(\lambda^{k-1}_i-\lambda^k_{j+1}+\theta(j-i))f(\lambda^k_i-\lambda^{k-1}_j+\theta(j-i))}.
\]
The asymptotics $f(\alpha)\sim\alpha^{1-\theta}$ as $\alpha\to\infty$ shows
\begin{multline*}
 \J_{\lambda^{k}/\lambda^{k-1}}(1)\sim \eps^{(k-1)(1-\theta)/2} \prod_{i=1}^{k-1} f(\theta(j-i))  \prod_{1\le i< j\le k-1} (y^{k-1}_j-y^{k-1}_i)^{1-\theta} \\
\cdot \prod_{1\le i< j\le k} (y^{k}_j-y^{k}_i)^{1-\theta} \prod_{i=1}^{k-1}\prod_{j=1}^k
|y^{k-1}_i-y^k_j|^{\theta-1}.
\end{multline*}
One obtains the proposition by putting together the asymptotics of the factors in
\eqref{eq_Jack_Gibbs}.
\end{proof}

As a combination of Propositions \ref{proposition_convergence_fixed_time} and
\ref{proposition_convergence_to_theta_Gibbs} we obtain the following statement.

\begin{corollary} \label{corollary_convergence_multi_level_fixed_time}
Fix some $N\in\nn$. Then, under the rescaling
$$
 s=\eps^{-1} \frac{t}{\theta},\quad \lambda_i^j=\eps^{-1}t+\eps^{-1/2}y_{j+1-i}^j,\quad
 1\leq i\leq j\leq N
$$
the measures $\JM^{asc}_{\mathfrak{r}_s;N}$ converge weakly in the limit $\eps\to0$ to the
probability measure on the Gelfand-Tsetlin cone ${\mathcal{G}^N}$ with density
\eq\label{eq_beta_Hermite_corners}
 \frac{1}{Z} \prod_{i<j} (y_j^N-y_i^N)
 \prod_{i=1}^N \exp\left(- \frac{(y_i^N)^2}{2\,t}\right)
 \prod_{n=1}^{N-1} \prod_{1\le i<j\le n} (y_j^n-y_i^n)^{2-2\theta} \\
\cdot\prod_{a=1}^n \prod_{b=1}^{n+1} |y^n_a-y^{n+1}_b|^{\theta-1},
\en
where
\eq\label{eq_beta_Hermite_corners2}
Z=t^{\theta N(N-1) +N/2}(2\pi)^{N/2}\prod_{j=1}^{N} \frac{(\Gamma(j\theta))^2}{\Gamma(\theta)^{j+1}}.
\en
\end{corollary}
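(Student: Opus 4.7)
The plan is to derive this corollary as a direct consequence of Propositions \ref{proposition_convergence_fixed_time} and \ref{proposition_convergence_to_theta_Gibbs}. Two preliminary facts are needed: that the rescaled top-level marginal of $\JM^{asc}_{\mathfrak{r}_s;N}$ is precisely the measure studied in Proposition \ref{proposition_convergence_fixed_time}, and that $\JM^{asc}_{\mathfrak{r}_s;N}$ is a Jack--Gibbs measure in the sense of Definition \ref{def_Jack_Gibbs}.

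Both facts are purely algebraic. Summing \eqref{eq_Jack_ascending} over $\lambda^1\prec\cdots\prec\lambda^{N-1}$ and iterating the skew identity \eqref{eq_Skew_Jack} with all $x$-variables specialized to $1$ yields $\sum \J_{\lambda^{N-1}/\lambda^{N-2}}(1)\cdots\J_{\lambda^1}(1)\J_{\lambda^N/\lambda^{N-1}}(1)=\J_{\lambda^N}(1^N)$, so the $\lambda^N$-marginal of the ascending process equals $\J_{\lambda^N}(1^N)\JQ_{\lambda^N}(\mathfrak{r}_s)/H_\theta(\mathfrak{r}_s;1^N)=\JM_{1^N;\mathfrak{r}_s}(\lambda^N)$. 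Dividing \eqref{eq_Jack_ascending} by this marginal produces exactly \eqref{eq_Jack_Gibbs}, confirming the Jack--Gibbs property.

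With these in place, Proposition \ref{proposition_convergence_fixed_time} gives weak convergence of the rescaled top level to the absolutely continuous density \eqref{eq_beta_Hermite_density}, and Proposition \ref{proposition_convergence_to_theta_Gibbs} then upgrades this to weak convergence of the full rescaled array on $\GT^{(N)}$ to an absolutely continuous measure on $\mathcal{G}^N$ whose conditional law given the top level is the $\theta$--Gibbs density \eqref{eq_theta_conditional}. The joint limiting density is thus the product of \eqref{eq_beta_Hermite_density} and \eqref{eq_theta_conditional}, and \eqref{eq_beta_Hermite_corners} follows by matching exponents: the factor $(y_j^N-y_i^N)^{2\theta}$ combines with the $k=N$ contribution $(y_j^N-y_i^N)^{1-2\theta}$ to give $(y_j^N-y_i^N)$, and for each $2\le n\le N-1$ the Vandermonde at level $n$ coming from the $k=n+1$ term combines with the $(y_j^n-y_i^n)^{1-2\theta}$ from the $k=n$ term to give $(y_j^n-y_i^n)^{2-2\theta}$; the cross-level factors $|y_a^n-y_b^{n+1}|^{\theta-1}$ come directly from \eqref{eq_theta_conditional}. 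The normalization in \eqref{eq_beta_Hermite_corners2} is then obtained by multiplying \eqref{whatisZ} by the product of prefactors $\prod_{k=2}^N \Gamma(k\theta)/\Gamma(\theta)^k$ extracted from \eqref{eq_theta_conditional}.

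No step is genuinely hard: the analytic content is already contained in Propositions \ref{proposition_convergence_fixed_time} and \ref{proposition_convergence_to_theta_Gibbs}. What remains is the identification of the top-level marginal of the ascending Jack process via the Cauchy-type summation and careful bookkeeping of Vandermonde and cross-level exponents when multiplying \eqref{eq_beta_Hermite_density} by \eqref{eq_theta_conditional}.
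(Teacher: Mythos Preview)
Your proposal is correct and follows exactly the approach the paper takes: the corollary is stated there simply as ``a combination of Propositions \ref{proposition_convergence_fixed_time} and \ref{proposition_convergence_to_theta_Gibbs}'' with no further argument, and you have supplied precisely the bookkeeping (identifying the top-level marginal as $\JM_{1^N;\mathfrak{r}_s}$, verifying the Jack--Gibbs property of the ascending process, and combining exponents) that the paper leaves implicit. One minor slip: to obtain \eqref{eq_beta_Hermite_corners2} from \eqref{whatisZ} you divide by the prefactors $\prod_{k=2}^N \Gamma(k\theta)/\Gamma(\theta)^k$ rather than multiply, since those constants sit in the numerator of the conditional density; the resulting Gamma product you quote is nonetheless consistent with the paper's stated $Z$.
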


\begin{rmk} Note that the probability measure of \eqref{eq_beta_Hermite_corners} is precisely the
Hermite $\beta=2\theta$ corners process with variance $t$ (see Definition \ref{defbetacorner}).
When $\theta=1$, the factors $(y_j^n-y_i^n)^{2-2\theta}$ and $|y^n_a-y^{n+1}_b|^{\theta-1}$ in
\eqref{eq_beta_Hermite_corners} disappear, and the conditional distribution of $y^1,
y^2,\dots,y^{N-1}$ given $y^N$ becomes \emph{uniform} on the polytope defined by the interlacing
conditions. This distribution is known to be that of eigenvalues of \emph{corners} of a random
Gaussian $N\times N$ Hermitian matrix sampled from the Gaussian Unitary Ensemble (see e.g.
\cite{Bar}). Similarly, for $\theta=1/2$ and $\theta=2$ one gets the joint distribution of the
eigenvalues of corners of the Gaussian Orthogonal Ensemble and the Gaussian Symplectic Ensemble,
respectively (see e.g. \cite{N}, \cite[Section 4]{OO-Shifted}).
\end{rmk}

\subsection{Dynamics related to Jack polynomials}

We are now ready to construct the stochastic dynamics related to Jack polynomials.
Similar constructions for Schur, $q$-Whittacker and Macdonald polynomials can be found in
\cite{BF}, \cite{B-Schur}, \cite{BG} and \cite{BC}.

\begin{definition} Given two specializations $\rho,\rho'$ define their union $(\rho,\rho')$ through
the formulas:
$$
 p_k(\rho,\rho')=p_k(\rho)+p_k(\rho'),\quad k\geq1,
$$
where $p_k$, $k\geq1$ are the Newton power sums as before.
\end{definition}

Let $\rho$ and $\rho'$ be two Jack-positive specializations such that
$H_\theta(\rho;\rho')<\infty$. Define matrices $p^{\uparrow}_{\lambda\to\mu}$ and
$p^{\downarrow}_{\lambda\to\mu}$ with rows and columns indexed by Young diagrams as follows:
\begin{equation}
 p^{\uparrow}_{\lambda\to\mu}(\rho;\rho')= \frac{1}{H_\theta(\rho;\rho')}\,
 \frac{\J_{\mu}(\rho)}{\J_\lambda(\rho)}\,\JQ_{\mu/\lambda}(\rho'), \quad \lambda, \mu\in\Y, \, \J_\lambda(\rho)\ne 0, \label{eq_p_up} \\
\end{equation}
\begin{equation}
 p^{\downarrow}_{\lambda\to\mu}(\rho;\rho')=
\frac{\J_{\mu}(\rho)}{\J_\lambda(\rho,\rho')}\,
 \J_{\lambda/\mu}(\rho'), \quad \quad\, \lambda,\mu\in\Y, \, \J_\lambda(\rho,\rho')\ne 0. \label{eq_p_down}
\end{equation}

The next two propositions follow from well-known properties of Jack polynomials (see
\cite{B-Schur}, \cite{BC}, \cite{BG} for analogous results in the cases of
 Schur, $q$-Whittacker and Macdonald polynomials).

\begin{proposition}
\label{Prop_p_is_stochastic} The matrices $p^{\uparrow}_{\lambda\to\mu}$ and
$p^{\downarrow}_{\lambda\to\mu}$ are stochastic, that is, all matrix elements are
non-negative, and for every $\lambda\in\Y$ we have
\begin{eqnarray}
 \sum\limits_{\mu\in\Y} p^{\uparrow}_{\lambda\to\mu}(\rho,\rho')=1,& \text{ if } \J_\lambda(\rho)\ne 0, \label{eq_x9} \\
 \sum\limits_{\mu\in\Y} p^{\downarrow}_{\lambda\to\mu}(\rho,\rho')=1, & \text{ if }
\J_\lambda(\rho,\rho')\ne 0. \label{eq_x10}
\end{eqnarray}
\end{proposition}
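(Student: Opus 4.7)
The plan is to verify the two required properties separately, reducing each to a standard identity for Jack polynomials.

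Non-negativity of both matrices is essentially immediate from the definition of a Jack-positive specialization. Every factor $\J_\mu(\rho)$, $\J_{\lambda/\mu}(\rho')$, $\JQ_{\mu/\lambda}(\rho')$ appearing in the numerators of \eqref{eq_p_up} and \eqref{eq_p_down} is non-negative by Jack-positivity of $\rho$ and $\rho'$. The normalizing constant $H_\theta(\rho;\rho')$ is strictly positive because it is the exponential of a series with non-negative terms ($p_k(\rho),p_k(\rho')\ge 0$ for $k\ge 1$, up to the sign conventions compatible with Jack-positivity). Under the non-vanishing hypotheses of the statement, the denominators $\J_\lambda(\rho)$ and $\J_\lambda(\rho,\rho')$ are themselves strictly positive (by Jack-positivity, ``nonzero'' forces ``positive''), so each matrix entry is a ratio of non-negative quantities with positive denominator.

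For \eqref{eq_x10} the row-sum identity is simply the defining relation \eqref{eq_Skew_Jack} for skew Jack polynomials. Applying $\J_\lambda(x,y)=\sum_\mu \J_\mu(x)\J_{\lambda/\mu}(y)$ to the pair of specializations $(\rho,\rho')$ gives
\[
\sum_{\mu\in\Y}\J_\mu(\rho)\,\J_{\lambda/\mu}(\rho')=\J_\lambda(\rho,\rho'),
\]
so dividing by $\J_\lambda(\rho,\rho')$ yields $\sum_\mu p^{\downarrow}_{\lambda\to\mu}=1$.

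For \eqref{eq_x9} the key tool is the skew Cauchy identity
\[
\sum_{\mu\in\Y}\J_\mu(x)\,\JQ_{\mu/\lambda}(y)=H_\theta(x;y)\,\J_\lambda(x),
\]
which is standard and can be derived from the full Cauchy identity $\sum_\nu\J_\nu(x)\JQ_\nu(y)=H_\theta(x;y)$ by introducing an auxiliary family $z$ of variables, writing $\JQ_\nu(y,z)=\sum_\lambda \JQ_{\nu/\lambda}(y)\JQ_\lambda(z)$ via the dual version of \eqref{eq_Skew_Jack}, and equating coefficients of $\JQ_\lambda(z)$ on the two sides of $\sum_\nu \J_\nu(x)\JQ_\nu(y,z)=H_\theta(x;y)H_\theta(x;z)$. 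Specializing $x\mapsto\rho$, $y\mapsto\rho'$ and dividing by $\J_\lambda(\rho)H_\theta(\rho;\rho')$ gives $\sum_\mu p^{\uparrow}_{\lambda\to\mu}=1$. The assumption $H_\theta(\rho;\rho')<\infty$ together with non-negativity of all summands justifies termwise manipulation of the infinite series. No step is genuinely hard; the only subtlety is invoking the skew Cauchy identity, which is classical (see \cite[Chapter VI]{M}) and is exactly the analogue of the identities used in the Schur, $q$-Whittaker, and Macdonald settings cited just before the proposition.
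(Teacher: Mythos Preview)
Your proof is correct and is exactly the argument the paper has in mind: the paper does not write out a proof at all but simply states that the proposition ``follows from well-known properties of Jack polynomials,'' citing the analogous Schur/Macdonald constructions. The identities you invoke---the branching relation \eqref{eq_Skew_Jack} for $p^\downarrow$ and the skew Cauchy identity for $p^\uparrow$---are precisely those well-known properties, so your write-up supplies the details the paper omits.
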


\begin{proposition} \label{Prop_p_agrees_with_Schur} For any $\mu\in\Y$ and any
Jack-positive specializations $\rho_1,\rho_2,\rho_3$ we have
\begin{eqnarray*}
&&\sum_{\lambda\in\Y\mid\, {\JM}_{\rho_1;\rho_2}(\lambda)\ne 0} {\JM}_{\rho_1;\rho_2}(\lambda)
p^\uparrow_{\lambda\to\mu}(\rho_2;\rho_3)=
 {\JM}_{\rho_1,\rho_3;\rho_2} (\mu) \\
&&\sum_{\lambda\in\Y\mid\, {\JM}_{\rho_1;\rho_2,\rho_3}(\lambda)\ne 0}
{\JM}_{\rho_1;\rho_2,\rho_3}(\lambda) p^\downarrow_{\lambda\to\mu}(\rho_2;\rho_3)=
 {\JM}_{\rho_1;\rho_2} (\mu).
\end{eqnarray*}
\end{proposition}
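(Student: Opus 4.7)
The plan is to establish both identities by direct substitution of the definitions (\ref{eq_p_up}), (\ref{eq_p_down}), and Definition \ref{def_Jack_mes}, reducing each to a known Cauchy-type identity for Jack polynomials. Three basic facts will be used throughout: (a) the branching expansion (\ref{eq_Skew_Jack}) for $\J$ together with its $\JQ$-analogue $\JQ_\lambda(x,y)=\sum_\mu \JQ_{\lambda/\mu}(x)\JQ_\mu(y)$; (b) the scalar relation $\JQ_\lambda=c_\lambda \J_\lambda$ with $c_\lambda=\prod_{\square\in\lambda}\frac{a(\square)+\theta l(\square)+\theta}{a(\square)+\theta l(\square)+1}$, which forces $\JQ_{\lambda/\mu}=(c_\lambda/c_\mu)\J_{\lambda/\mu}$ by comparing the two expansions in (a); and (c) the factorization $H_\theta(\rho,\rho';\sigma)=H_\theta(\rho;\sigma)H_\theta(\rho';\sigma)$, which is immediate from the additivity of $p_k$ under union of specializations and the exponential form of $H_\theta$.

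For the first identity, after substituting the definitions I would use (b) to rewrite $\JQ_\lambda(\rho_2)\,\JQ_{\mu/\lambda}(\rho_3)/\J_\lambda(\rho_2)$ as $c_\mu\,\J_{\mu/\lambda}(\rho_3)$, absorb the remaining $c_\mu$ into $\J_\mu(\rho_2)$ to form $\JQ_\mu(\rho_2)$, and sum over $\lambda$ using the branching rule in (a) to collapse $\sum_\lambda \J_\lambda(\rho_1)\J_{\mu/\lambda}(\rho_3)$ into $\J_\mu(\rho_1,\rho_3)$. Combining the two factors of $H_\theta$ in the denominator via (c) into $H_\theta(\rho_1,\rho_3;\rho_2)$ then produces $\JM_{\rho_1,\rho_3;\rho_2}(\mu)$.

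For the second identity the analogous manipulation with (b) reduces the left-hand side to
\begin{equation*}
\frac{\JQ_\mu(\rho_2)}{H_\theta(\rho_1;\rho_2,\rho_3)}\sum_\lambda \J_\lambda(\rho_1)\,\JQ_{\lambda/\mu}(\rho_3),
\end{equation*}
so the proof comes down to the skew Cauchy identity
\begin{equation*}
\sum_\lambda \J_\lambda(\rho_1)\,\JQ_{\lambda/\mu}(\rho_3)=H_\theta(\rho_1;\rho_3)\,\J_\mu(\rho_1).
\end{equation*}
I would derive this identity by expanding $H_\theta(\rho_1;\rho_3,y)$ in an auxiliary argument $y$ in two ways: first as $\sum_\lambda \J_\lambda(\rho_1)\JQ_\lambda(\rho_3,y)$ followed by the $\JQ$-branching expansion in $(\rho_3,y)$, and second via (c) and the ordinary Cauchy identity as $H_\theta(\rho_1;\rho_3)\sum_\mu \J_\mu(\rho_1)\JQ_\mu(y)$, and then matching coefficients of $\JQ_\mu(y)$. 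Combined with one more application of (c) in the denominator, this delivers $\JM_{\rho_1;\rho_2}(\mu)$.

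The whole argument is bookkeeping, so I do not expect a single dominant obstacle; the place deserving most care is keeping track of the $c_\lambda/c_\mu$ factors at the level of skew functions, and giving a brief justification that the coefficient-matching underlying the skew Cauchy step is legitimate as an identity in the formal variable $y$. Both are standard and directly analogous to the Schur, $q$-Whittaker, and Macdonald analogues in \cite{B-Schur}, \cite{BC}, \cite{BG}.
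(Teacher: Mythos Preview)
Your proposal is correct and is precisely the standard argument the paper has in mind: the paper does not give a proof at all but simply states that the proposition follows from well-known properties of Jack polynomials, referring to \cite{B-Schur}, \cite{BC}, \cite{BG} for the Schur, $q$-Whittaker, and Macdonald analogues. Your sketch supplies exactly those details---the branching rule, the $c_\lambda$ relation between $\J$ and $\JQ$, the multiplicativity of $H_\theta$, and the skew Cauchy identity---in the expected way.
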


Let $X^N_{disc}(s)$, $s\geq0$ denote the continuous time Markov chain on $\Y^N$ with transition
probabilities given by $p^{\uparrow}(1^N;\mathfrak{r}_s)$, $s\geq0$ (and arbitrary initial
condition $X^N_{disc}(0)\in\Y^N$). We record the jump rates of $X^N_{disc}$ for later use.

\begin{proposition}
\label{prop_intensities_one_level} The jump rates of Markov chain $X^N_{disc}$ on $\Y^N$ are given
by
$$
 q_{\lambda\to\mu} =\begin{cases} \dfrac{\J_{\mu}(1^N)}{\J_\lambda(1^N)}\,\JQ_{\mu/\lambda}(\rho_1),& \mu=\lambda\sqcup
 \square,\\
 -\sum\limits_{\nu=\lambda\sqcup\square} q_{\lambda\to\nu},& \mu=\lambda,\\
 0,&\text{otherwise.}
  \end{cases}
$$
\end{proposition}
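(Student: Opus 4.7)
The plan is to read off the jump rates by expanding the transition kernel $p^{\uparrow}(1^N;\mathfrak{r}_s)$ to first order in $s$. The first task is to confirm that $\{p^{\uparrow}(1^N;\mathfrak{r}_s)\}_{s\ge 0}$ is a genuine continuous-time Markov semigroup; that is, to verify the Chapman--Kolmogorov relation
\begin{equation*}
\sum_\nu p^{\uparrow}_{\lambda\to\nu}(1^N;\mathfrak{r}_s)\, p^{\uparrow}_{\nu\to\mu}(1^N;\mathfrak{r}_t) = p^{\uparrow}_{\lambda\to\mu}(1^N;\mathfrak{r}_{s+t}).
\end{equation*}
I would combine three ingredients: the skew Cauchy-type identity $\JQ_{\mu/\lambda}(\rho,\rho') = \sum_\nu \JQ_{\nu/\lambda}(\rho)\,\JQ_{\mu/\nu}(\rho')$, obtained by iterating \eqref{eq_Skew_Jack}; the additivity $\mathfrak{r}_s\cup\mathfrak{r}_t = \mathfrak{r}_{s+t}$, visible at the level of power sums since $p_1(\mathfrak{r}_s)=s$ and $p_k(\mathfrak{r}_s)=0$ for $k\ge 2$; and the evaluation $H_\theta(1^N;\mathfrak{r}_s) = e^{\theta N s}$ obtained from the same power-sum data and $p_k(1^N)=N$. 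Stochasticity of each $p^{\uparrow}(1^N;\mathfrak{r}_s)$ is already granted by Proposition \ref{Prop_p_is_stochastic}.

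The decisive step is a short homogeneity argument. Since $\JQ_{\mu/\lambda}$ is a symmetric function of degree $|\mu|-|\lambda|$, and $\mathfrak{r}_s$ kills every power sum $p_k$ with $k\ge 2$ while sending $p_1\mapsto s$, the expansion of $\JQ_{\mu/\lambda}$ in the power-sum basis collapses upon specialization to a single monomial, so that
\begin{equation*}
\JQ_{\mu/\lambda}(\mathfrak{r}_s) = \JQ_{\mu/\lambda}(\mathfrak{r}_1)\, s^{|\mu|-|\lambda|}.
\end{equation*}
Hence $\JQ_{\mu/\lambda}(\mathfrak{r}_s) = o(s)$ as $s\downarrow 0$ unless $\mu = \lambda\sqcup\square$, in which case it is linear in $s$ with slope $\JQ_{\mu/\lambda}(\mathfrak{r}_1)$. (Here $\mathfrak{r}_1$ is what the statement denotes by $\rho_1$.)

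Finally, with $q_{\lambda\to\mu} = \lim_{s\downarrow 0} s^{-1} p^{\uparrow}_{\lambda\to\mu}(1^N;\mathfrak{r}_s)$ for $\mu\neq\lambda$, combining $H_\theta(1^N;\mathfrak{r}_s)^{-1} = 1 + O(s)$ with the monomial expansion above yields the off-diagonal entries of the claim. The diagonal rate is then forced by the row-sum identity in Proposition \ref{Prop_p_is_stochastic}, equivalently by the requirement that the rows of the generator sum to zero; as a consistency check, $\JQ_{\lambda/\lambda}=1$ makes $p^{\uparrow}_{\lambda\to\lambda}(1^N;\mathfrak{r}_s) = e^{-\theta N s}$, so the diagonal derivative is $-\theta N$, matching the row sum of the off-diagonal rates. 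There is no serious obstacle here; the only conceptual point to highlight is the recognition that $\JQ_{\mu/\lambda}(\mathfrak{r}_s)$ is a pure monomial in $s$, which is what restricts the admissible jumps to single-box additions.
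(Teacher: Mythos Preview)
Your argument is correct. The paper states this proposition without proof, evidently treating it as an immediate consequence of the definition of $X^N_{disc}$ via the transition kernels $p^{\uparrow}(1^N;\mathfrak{r}_s)$; your expansion via the homogeneity identity $\JQ_{\mu/\lambda}(\mathfrak{r}_s)=s^{|\mu|-|\lambda|}\JQ_{\mu/\lambda}(\mathfrak{r}_1)$ is exactly the standard way to read off the generator, and your semigroup check and diagonal consistency remark are both sound.
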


\begin{rmk} The jump rates $q_{\lambda\to\mu}$ are explicit. Indeed, $\J_\lambda(1^N)$ is
computed in Proposition \ref{proposition_alpha} and for $\mu=\lambda\sqcup (i,j)$
\begin{equation}
\label{eq_transition_explicit}
 \JQ_{\mu/\lambda}(\rho_1) =
   \theta \prod_{k=1}^{i-1} \frac{a(k,j)+\theta (i-k+1)}{a(k,j)+\theta (i-k)} \cdot
  \frac{a(k,j)+1+\theta (i-k-1)}{a(k,j)+1+\theta (i-k)},
\end{equation}
where the arm lengths are computed with respect to the (smaller) diagram $\lambda$ (see
\cite[Chapter VI, (6.20),(6.24), (7.13), (7.14)]{M}). In contrast, there are no fairly simple
 formulas for the transition probabilities $p^{\uparrow}(1^N;\mathfrak{r}_s)$, $s\geq0$ of
$X^N_{disc}$.
\end{rmk}

The following proposition turns out to be useful.

\begin{proposition}\label{Prop_sumpoisson} The process $|X^N_{disc}|:=\sum_{i=1}^N (X^N_{disc})_i$ is a
Poisson process with intensity $N\theta$.
\end{proposition}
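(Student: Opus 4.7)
The key observation is that the only transitions of $X^N_{disc}$ with positive rate are $\lambda \to \lambda \sqcup \square$ (adding a single box), by Proposition \ref{prop_intensities_one_level}. Consequently $|X^N_{disc}|$ is a pure birth process with unit jumps, and to identify it as a Poisson process with rate $N\theta$ it suffices to show that the \emph{total} exit rate from any $\lambda \in \Y^N$ equals $N\theta$, independently of $\lambda$.

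The plan is to extract this total rate directly from the stochasticity of $p^{\uparrow}(1^N;\mathfrak{r}_s)$. First I would recall that the Plancherel specialization $\mathfrak{r}_s$ has $p_1(\mathfrak{r}_s)=s$ and $p_k(\mathfrak{r}_s)=0$ for $k\ge 2$, and therefore homogeneity of Jack polynomials gives
\[
 \JQ_{\mu/\lambda}(\mathfrak r_s) = s^{|\mu|-|\lambda|}\,\JQ_{\mu/\lambda}(\mathfrak r_1),
\]
so the summed transition probabilities
\[
 \sum_{\mu\in\Y}\frac{\J_\mu(1^N)}{\J_\lambda(1^N)}\,\JQ_{\mu/\lambda}(\mathfrak r_s)
\]
form a genuine power series in $s$ (with only finitely many nonzero coefficients once $\lambda$ is fixed, since at most $N$ boxes can ever be added in a single row).

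Next, Proposition \ref{Prop_p_is_stochastic} together with the definition \eqref{eq_p_up} identifies this power series with $H_\theta(1^N;\mathfrak r_s)$. Using $p_k(1^N)=N$ and the formula for $H_\theta$ from Definition \ref{def_Jack_mes}, we obtain
\[
 H_\theta(1^N;\mathfrak r_s) = \exp\!\Bigl(\sum_{k\ge 1}\frac{\theta}{k}\,p_k(1^N)\,p_k(\mathfrak r_s)\Bigr) = \exp(\theta\, N\, s) = e^{N\theta s}.
\]
Matching the coefficient of $s^1$ on both sides of
\[
 \sum_{\mu\supseteq\lambda}\frac{\J_\mu(1^N)}{\J_\lambda(1^N)}\,s^{|\mu|-|\lambda|}\JQ_{\mu/\lambda}(\mathfrak r_1) \;=\; e^{N\theta s},
\]
and recalling from Proposition \ref{prop_intensities_one_level} that the $s^1$ coefficient on the left is precisely $\sum_{\mu=\lambda\sqcup\square} q_{\lambda\to\mu}$, yields the total exit rate
\[
 \sum_{\mu=\lambda\sqcup\square} q_{\lambda\to\mu} = N\theta.
\]

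Since this rate is independent of $\lambda$ and every jump increases $|X^N_{disc}|$ by exactly $1$, the process $|X^N_{disc}|$ is a standard Poisson process of intensity $N\theta$. I do not anticipate a real obstacle here; the only point to take care of is the justification of interchanging the sum over $\mu$ with the power series expansion, which is harmless because for fixed $\lambda$ only Young diagrams with $|\mu|\le |\lambda|+N$ (and of bounded row lengths) contribute to the $s^1$ coefficient, so the identification of coefficients is term-by-term.
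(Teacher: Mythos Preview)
Your argument is correct and takes a different route from the paper's. The paper computes the total exit rate by invoking the Pieri rule for Jack polynomials: it rewrites $\JQ_{\mu/\lambda}(\mathfrak r_1)$ in terms of the Pieri coefficients $\psi_{\mu/\lambda}$ and then uses the identity $\sum_{\mu=\lambda\sqcup\square} \J_\mu(1^N)\,\psi_{\mu/\lambda}\,\frac{\JQ_{\mu/\lambda}}{\J_{\mu/\lambda}}=\theta\,e_1(1^N)\,\J_\lambda(1^N)=N\theta\,\J_\lambda(1^N)$. You instead differentiate the stochasticity identity of Proposition~\ref{Prop_p_is_stochastic} at $s=0$: homogeneity turns the left side into a power series in $s$, the right side is $H_\theta(1^N;\mathfrak r_s)=e^{N\theta s}$, and the $s^1$ coefficient is exactly the total rate. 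This is arguably cleaner, since it bypasses the Pieri rule entirely and uses only facts already recorded in the paper.

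One small correction: your parenthetical claims that the power series has only finitely many nonzero coefficients (``at most $N$ boxes can ever be added in a single row'') and later that only $\mu$ with $|\mu|\le|\lambda|+N$ contribute are both false---$\mu$ can be arbitrarily large. What is true, and is all you need, is that for each fixed $d\ge0$ there are only finitely many $\mu\supseteq\lambda$ with $|\mu|-|\lambda|=d$, so each coefficient of $s^d$ is a finite sum and the termwise identification with the Taylor coefficients of $e^{N\theta s}$ is legitimate (e.g.\ by differentiating at $s=0$, or by noting that both sides are convergent power series agreeing for all $s>0$). This does not affect the validity of the argument.
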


\begin{proof}
According to Proposition \ref{prop_intensities_one_level} the process $|X^N_{disc}|$
increases by $1$ with rate
$$
\sum_{\mu=\lambda\sqcup\square} \dfrac{\J_{\mu}(1^N)}{\J_\lambda(1^N)} \JQ_{\mu/\lambda}(\rho_1).
$$
Note that when $\mu=\lambda\sqcup\square$, $\J_{\mu/\lambda}$ is a symmetric polynomial of degree $1$ with leading coefficient $\psi_{\mu/\lambda}$ defined in \eqref{eq_branching_rule}. Therefore,
$$
\JQ_{\mu/\lambda}(\rho_1)= \psi_{\mu/\lambda}\,\frac{\JQ_{\mu/\lambda}(\rho_1)}{\J_{\mu/\lambda}(\rho_1)}\,p_1(\rho_1)
= \psi_{\mu/\lambda} \frac{\JQ_{\mu/\lambda}(\rho_1)}{\J_{\mu/\lambda}(\rho_1)}.
$$
Now, an application of the \emph{Pierry rule} for Jack polynomials (see \cite[(6.24) and Section 10 in
 Chapter VI]{M}) yields that
$$
 \sum_{\mu=\lambda\sqcup\square} {\J_{\mu}(1^N)}\,\psi_{\mu/\lambda}
 \,\frac{\JQ_{\mu/\lambda}(\rho_1)}{\J_{\mu/\lambda}(\rho_1)}=\theta e_1(1^N)\J_\lambda(1^N)=N\theta \J_\lambda(1^N) .
$$
This finishes the proof.
\end{proof}

Proposition \ref{Prop_p_agrees_with_Schur} implies the following statement.

\begin{proposition}
Suppose that the initial condition $X^N_{disc}(0)$  is the empty Young diagram (so that
$\lambda_1=\lambda_2=\ldots=\lambda_N=0$). Then, for any fixed $s>0$ the law of $X^N_{disc}(s)$ is
given by $\JM_{1^N;\mathfrak{r}_s}$ (see Proposition \ref{Proposition_prelimit_one_level} for an
explicit formula).
\end{proposition}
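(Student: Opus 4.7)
The plan is to unpack the definitions directly. Since by construction the time-$s$ transition kernel of $X^N_{disc}$ is $p^\uparrow(1^N;\mathfrak{r}_s)$, starting from $X^N_{disc}(0)=\emptyset$ I have $\P(X^N_{disc}(s)=\mu)=p^{\uparrow}_{\emptyset\to\mu}(1^N;\mathfrak{r}_s)$. So the task reduces to a one-line identification of this expression with the Jack measure density.

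Applying \eqref{eq_p_up} and using $\J_\emptyset\equiv 1$ together with $\JQ_{\mu/\emptyset}=\JQ_\mu$ (which follows from \eqref{eq_Skew_Jack}: evaluating $\J_\mu(x,y)=\sum_\nu \J_\nu(x)\J_{\mu/\nu}(y)$ at $x$ equal to the empty specialization collapses the sum to the single term $\nu=\emptyset$, and similarly for $\JQ$), one obtains
\[
   p^{\uparrow}_{\emptyset\to\mu}(1^N;\mathfrak{r}_s)=\frac{\J_\mu(1^N)\,\JQ_\mu(\mathfrak{r}_s)}{H_\theta(1^N;\mathfrak{r}_s)},
\]
which is exactly $\JM_{1^N;\mathfrak{r}_s}(\mu)$ by Definition \ref{def_Jack_mes}.

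Equivalently, one can derive this from the first identity of Proposition \ref{Prop_p_agrees_with_Schur} by taking $\rho_1$ to be the empty specialization, $\rho_2=1^N$, and $\rho_3=\mathfrak{r}_s$. The measure $\JM_{\emptyset;1^N}$ is the point mass on $\emptyset$ (the empty specialization annihilates $\J_\lambda$ unless $\lambda=\emptyset$), the union of specializations $(\emptyset,\mathfrak{r}_s)$ equals $\mathfrak{r}_s$ at the level of Newton power sums, and the two Jack measures $\JM_{\mathfrak{r}_s;1^N}$ and $\JM_{1^N;\mathfrak{r}_s}$ coincide because both $H_\theta$ and the combinatorial factor $\JQ_\lambda/\J_\lambda=\prod_{\square\in\lambda}\frac{a(\square)+\theta l(\square)+\theta}{a(\square)+\theta l(\square)+1}$ are invariant under swapping the two specializations.

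The only preliminary step that is not completely formal is to verify that $s\mapsto p^\uparrow(1^N;\mathfrak{r}_s)$ is genuinely a Markov semigroup, so that the time-$s$ distribution of the chain is obtained by a single application of this kernel. Composing two such kernels and using \eqref{eq_p_up}, this amounts to the coproduct identity $\JQ_{\mu/\lambda}(x,y)=\sum_\nu \JQ_{\nu/\lambda}(x)\JQ_{\mu/\nu}(y)$, itself obtained by iterating \eqref{eq_Skew_Jack}, combined with the multiplicativity $H_\theta(\rho;(\rho',\rho''))=H_\theta(\rho;\rho')H_\theta(\rho;\rho'')$ and the additivity $\mathfrak{r}_{s_1+s_2}=(\mathfrak{r}_{s_1},\mathfrak{r}_{s_2})$ of the Plancherel specializations. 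All three ingredients are standard Jack-polynomial facts, so there is no genuine obstacle.
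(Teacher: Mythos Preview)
Your proof is correct and matches the paper's own justification: the paper simply states that the proposition is an immediate consequence of Proposition~\ref{Prop_p_agrees_with_Schur}, which is exactly your second route. Your first route (direct evaluation of $p^{\uparrow}_{\emptyset\to\mu}$) is even more elementary and amounts to the same computation, and your remark on the semigroup property is a welcome clarification that the paper leaves implicit.
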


Our next goal is to define a stochastic dynamics on $\GT^{(N)}$. The construction we use is
parallel to those of \cite{BF}, \cite{BG_shuf}, \cite{B-Schur}, \cite{BC}, \cite{BG}; it is
based on an idea going back to \cite{DF}, which allows to couple the dynamics of Young diagrams
of different sizes. We start from the degenerate discrete time dynamics $\lambda^0(n)=\emptyset$,
$n\in\nn_0$ and construct the discrete time dynamics of $\lambda^1,\lambda^2,\ldots,\lambda^N$
inductively. Given $\lambda^{k-1}(n)$, $n\in\nn_0$ and a Jack-positive specialization $\rho$
we define the process $\lambda^k(n)$, $n\in\nn_0$ with a given initial condition $\lambda^k(0)$
satisfying $\lambda^{k-1}(0)\prec\lambda^k(0)$ as follows. We let the distribution of
$\lambda^k(n+1)$ depend only on $\lambda^k(n)$ and $\lambda^{k-1}(n+1)$ and be given by
\begin{equation}
\label{eq_2lev_discrete_time}
 \P(\lambda^k(n+1)=\nu\mid \lambda^k(n)=\lambda,\, \lambda^{k-1}(n+1)=\mu)=\dfrac{\JQ_{\nu/\lambda}(\rho)\,\J_{\nu/\mu}(1)}
{\sum_{\kappa\in\Y} \JQ_{\kappa/\lambda}(\rho)\,\J_{\kappa/\mu}(1)}.
\end{equation}
Carrying out this procedure for $k=1,2,\ldots,N$ we end up with a discrete time Markov chain
$\hat X^{multi}_{disc}(n;\rho)$, $n\in\nn_0$ on $\GT^{(N)}$.

\begin{definition}
\label{Def_X_multi} Define the continuous time dynamics $X^{multi}_{disc}(s)$, $s\geq0$ on
$\GT^{(N)}$ with an initial condition $X^{multi}_{disc}(0)\in\GT^{(N)}$ as the limit
$$
\lim_{\eps\to 0} \hat X^{multi}_{disc}(\lfloor \eps^{-1}s \rfloor;\mathfrak{r}_\eps),
$$
where all dynamics $\hat X^{multi}_{disc}(\cdot;\mathfrak{r}_\eps)$ are started from the initial
condition $X^{multi}_{disc}(0)$ and the specialization $\mathfrak{r}_\eps$ is defined as in
Proposition \ref{proposition_plancherel}.
\end{definition}

\begin{rmk} Alternatively, we could have started from the specialization $\rho$ with a single
$\alpha$ parameter $\alpha_1=\eps$ and we would have arrived at the same continuous time dynamics.
Analogous constructions of the continuous time dynamics in the context of Schur and Macdonald polynomials
can be found in \cite{BF} and \cite{BC}.
\end{rmk}

\smallskip

Note that when $\lambda\subset\kappa$ the term $\JQ_{\kappa/\lambda}(\mathfrak{r}_\eps)$ is of
order $\eps^{|\kappa|-|\lambda|}$ as $\eps\to 0$. Therefore, the leading order term in the sum on
the right-hand side of \eqref{eq_2lev_discrete_time} comes from the choice $\kappa=\lambda$ unless
that $\kappa$ violates $\mu\subset\kappa$, in which case the leading term corresponds to taking
$\kappa=\mu$. Moreover, the first-order terms come from the choices $\kappa=\lambda\sqcup\square$
and the resulting terms turn into the jump rates of the continuous time dynamics. Summing up, the
continuous time dynamics $X^{multi}_{disc}(s)$, $s\geq0$ looks as follows: given the trajectory of
$\lambda^{k-1}$, a box $\square$ is added to the Young diagram $\lambda^k$ at time $t$ at the rate
\begin{equation}
\label{eq_jump_intensity}
 q(\square,\lambda^k(s-),\lambda^{k-1}(s))
=\JQ_{(\lambda^{k}(s-)\sqcup\square)/\lambda^{k}(s-)}(\rho_1)\,
\dfrac{\J_{(\lambda^{k}(s-)\sqcup\square)/\lambda^{k-1}(s)}(1)}{\J_{\lambda^{k}(s-)/\lambda^{k-1}(s)}(1)}.
\end{equation}
In particular, the latter jump rates incorporate the following push interaction: if the coordinates
of $\lambda^{k-1}$ evolve in a way which violates the interlacing condition $\lambda^{k-1}\prec
\lambda^{k}$, then the appropriate coordinate of $\lambda^k$ is pushed in the sense that a box is
added immediately to the Young diagram $\lambda^k$ to restore the interlacing. The factors on the
right-hand side of \eqref{eq_jump_intensity} are explicit. Indeed, the first one is given by
\eqref{eq_transition_explicit} and the second one by \eqref{eq_branching_rule}. Simulations of the
continuous time dynamics for $\theta=0.5$ and $\theta=2$ can be found at \cite{Han}.

\medskip

The following statement is proved by repeating the argument of \cite[Sections 2.2, 2.3]{BF}, see
also \cite{B-Schur}, \cite{BG}, \cite{BC}.

\begin{proposition}\label{prop_intertwining_disc} Suppose that $X^{multi}_{disc}(s)$, $s\geq0$ is started from
a random initial condition with a Jack--Gibbs distribution. Then
\begin{itemize}
\item
 Restriction of $X^{multi}_{disc}(s)$ to level $N$ coincides with $X^{N}_{disc}(s)$, $s\geq0$ started from the restriction
 to level $N$ of the  initial condition.
\item
 The distribution of $X^{multi}_{disc}(s)$ at time $s>0$ is again a Jack--Gibbs distribution. Moreover, if
 $X^{multi}_{disc}(0)$ has distribution $\JM^{asc}_{\rho;N}$, then $X^{multi}_{disc}(s)$ has distribution
 $\JM^{asc}_{\rho,\mathfrak{r}_s;N}$.
\end{itemize}
\end{proposition}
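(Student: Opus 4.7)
The proof follows the Diaconis--Fill intertwining scheme, adapted to Jack polynomials as in \cite{BF,B-Schur,BG,BC}. The plan is to establish both conclusions at the discrete-time level (for $\hat X^{multi}_{disc}(n;\rho)$) by induction on $N$, and then pass to the continuous-time limit via Definition \ref{Def_X_multi}. The central algebraic input is the intertwining
$$
P_k\, L^k_{k-1} \;=\; L^k_{k-1}\, P_{k-1},
$$
where $P_k=p^\uparrow(1^k;\rho)$ denotes the one-step kernel of $\hat X^k_{disc}$ and
$$
L^k_{k-1}(\mu,\lambda)\;=\;\frac{\J_{\mu/\lambda}(1)\,\J_\lambda(1^{k-1})}{\J_\mu(1^k)}
$$
is the Jack--Gibbs conditional link from level $k$ down to level $k-1$; its stochasticity is the specialization $\J_\mu(1^k)=\sum_\lambda \J_\lambda(1^{k-1})\,\J_{\mu/\lambda}(1)$ of \eqref{eq_Skew_Jack}. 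I would verify this commutation directly by expanding both sides via the definitions of $P_k$ and $L^k_{k-1}$ and reducing the resulting identity on skew Jack polynomials to the Cauchy/Pieri identities packaged in Proposition \ref{Prop_p_agrees_with_Schur}.

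Granting the commutation, the induction on $N$ proceeds as follows. Assume the proposition holds for the subsystem on levels $1,\dots,N-1$, so that $(\lambda^1(n),\dots,\lambda^{N-1}(n))$ is Jack--Gibbs and its top marginal $\lambda^{N-1}(n)$ evolves as $\hat X^{N-1}_{disc}$. A direct computation with the transition rule \eqref{eq_2lev_discrete_time}, combined with the commutation above, shows that the conditional joint law of $(\lambda^{N-1}(n+1),\lambda^N(n+1))$ given $\lambda^N(n)$ factors as $P_N(\lambda^N(n),\nu)\,L^N_{N-1}(\nu,\cdot)$. This simultaneously identifies the marginal transition of $\lambda^N$ as $P_N$ and preserves the Jack--Gibbs product structure at time $n+1$. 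For the refined assertion on the ascending measure, note that $\JM^{asc}_{\rho;N}$ has level-$N$ marginal $\JM_{1^N;\rho}$ (by the branching rule, summing out the lower levels), which under one step of $P_N$ driven by $\mathfrak{r}_\eps$ becomes $\JM_{1^N;\rho,\mathfrak{r}_\eps}$ by the first identity of Proposition \ref{Prop_p_agrees_with_Schur}; combined with the already-established preservation of the Jack--Gibbs property, this promotes the full joint law at time $n+1$ to $\JM^{asc}_{\rho,\mathfrak{r}_\eps;N}$, and iteration handles arbitrary $n$.

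To pass to continuous time with $n=\lfloor\eps^{-1}s\rfloor$, observe that the union of $\lfloor\eps^{-1}s\rfloor$ copies of $\mathfrak{r}_\eps$ has $p_1\to s$ and all higher Newton power sums vanishing, hence converges to $\mathfrak{r}_s$; by continuity of $\JM^{asc}_{\,\cdot\,;N}$ in its specialization, the time-$s$ distribution converges weakly to $\JM^{asc}_{\rho,\mathfrak{r}_s;N}$, and the level-$N$ marginal of $X^{multi}_{disc}(s)$ is identified with $X^N_{disc}(s)$ in the limit. The main technical obstacle I anticipate is the explicit verification of the commutation $P_k L^k_{k-1}=L^k_{k-1} P_{k-1}$: although it is formally guaranteed by the algebra of Jack polynomials, its clean reduction to the standard identities (Cauchy, Pieri, branching) requires careful combinatorial bookkeeping of the skew coefficients $\JQ_{\cdot/\cdot}(\rho)$ and $\J_{\cdot/\cdot}(1)$. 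Once this is in hand, the induction and the continuous-time passage are essentially routine.
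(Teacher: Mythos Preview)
Your proposal is correct and follows essentially the same route as the paper, which does not give a self-contained proof but simply refers to the standard Diaconis--Fill/Borodin--Ferrari argument in \cite[Sections 2.2, 2.3]{BF} (see also \cite{B-Schur}, \cite{BG}, \cite{BC}). Your outline---the intertwining $P_k L^k_{k-1}=L^k_{k-1}P_{k-1}$ via the skew Cauchy identity, the inductive propagation of the Jack--Gibbs structure using the update rule \eqref{eq_2lev_discrete_time}, and the passage to continuous time by letting the union of $\lfloor\eps^{-1}s\rfloor$ copies of $\mathfrak{r}_\eps$ converge to $\mathfrak{r}_s$---is precisely that argument specialized to Jack polynomials.
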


\begin{rmk} In fact, there is a way to generalize Proposition \ref{prop_intertwining_disc} to a
statement describing the restriction of our multilevel dynamics started from Jack--Gibbs initial
conditions to any monotonous space-time path (meaning that we look at level $N$ for some time,
then at level $N-1$ and so on). We refer the reader to \cite[Proposition 2.5]{BF} for a precise
statement in the setting of multilevel dynamics based on Schur polynomials.
\end{rmk}

\section{Convergence to Dyson Brownian Motion}
\label{Section_DBM}

The goal of this section is to prove that the Markov chain $X^N_{disc}(s)$ converges in the diffusive scaling limit to the Dyson Brownian Motion.

\smallskip

To start with, we recall the existence and uniqueness result for Dyson Brownian motions with $\beta>0$ (see e.g.\ \cite[Proposition 4.3.5]{AGZ} for the case $\beta\geq1$ and \cite[Theorem 3.1]{CL} for the case $0<\beta<1$).

\begin{proposition}\label{Prop_gen_DBM_thm}
For any $N\in\nn$ and $\beta>0$, the system of SDEs \eq\label{gen_DBM}
\mathrm{d}X_i(t)=\frac{\beta}{2}\,\sum_{j\neq i} \frac{1}{X_i(t)-X_j(t)}\,\mathrm{d}t +
\mathrm{d}W_i(t), \en $i=1,2,\ldots,N$, with $W_1,W_2,\ldots,W_N$ being independent standard
Brownian motions, has a unique strong solution taking values in the Weyl chamber $
\overline{\mathcal{W}^N}$ for any initial condition $X(0)\in\overline{\mathcal{W}^N}$. Moreover,
for all initial conditions, the stopping time \eq
\tau:=\inf\{t>0:\;X_i(t)=X_{i+1}(t)\;\mathrm{for\;some\;}i\} \en is infinite with probability $1$
if $\beta\geq1$ and finite with positive probability if $0<\beta<1$.
\end{proposition}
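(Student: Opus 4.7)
The plan is to follow the standard two-regime strategy, essentially as in \cite[Prop.~4.3.5]{AGZ} for $\beta\ge 1$ and \cite[Thm.~3.1]{CL} for $0<\beta<1$, so the task reduces to outlining the known ingredients.

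For the existence and uniqueness part when $\beta\ge 1$, I would first truncate the singular drift: for each $M>0$, replace $1/(x_i-x_j)$ by a smooth, bounded, Lipschitz approximation $b^M_{ij}(x)$ that agrees with $1/(x_i-x_j)$ on $\{|x_i-x_j|\ge 1/M\}$. The truncated SDE has globally Lipschitz coefficients, so it admits a unique strong solution $X^M$. Introducing the stopping time
\[
\tau_M=\inf\{t\ge 0:\;X_i^M(t)-X_j^M(t)=1/M\text{ for some } i\ne j\},
\]
it suffices to show $\tau_M\uparrow\infty$ almost surely; if so, the solutions $X^M$ are consistent on $[0,\tau_M]$ and assemble into the unique strong solution of \eqref{gen_DBM} up to the collision time $\tau=\lim_M\tau_M$. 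Pathwise uniqueness up to $\tau$ is immediate from the Lipschitz character of the drift away from the diagonals.

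The core step is the Lyapunov argument that controls collisions and therefore shows $\tau=\infty$ almost surely when $\beta\ge 1$. I would apply It\^o's formula to
\[
V(x)=-\sum_{1\le i<j\le N}\log(x_j-x_i)+\tfrac12\sum_{i=1}^N x_i^2,
\]
evaluated along the truncated process up to $\tau_M$. A direct computation (using that for $\beta\ge 1$ the coefficient $\tfrac{\beta}{2}-\tfrac12$ is nonnegative and that the cross terms $\sum_{i,j,k\text{ distinct}}\tfrac{1}{(x_i-x_j)(x_i-x_k)}$ vanish by symmetrization) shows that the drift in $dV(X^M(t))$ is bounded above by an affine function of $V(X^M(t))$. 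Gronwall then gives $\sup_{t\le T}\mathbb E[V(X^M(t\wedge\tau_M))]<\infty$, which forces $\tau_M\to\infty$ and simultaneously shows that $\tau=+\infty$ with probability one. This handles uniqueness on all of $[0,\infty)$ by the usual localization.

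For $0<\beta<1$, the above Lyapunov computation loses the key sign (the $(\tfrac{\beta}{2}-\tfrac12)$ factor becomes negative), and in fact one expects positive probability of collisions. Here I would appeal to the multivalued SDE framework of C\'epa--L\'epingle: the SDE \eqref{gen_DBM} is recast as $dX(t)+\partial\Phi(X(t))\,dt\ni dW(t)$ on $\overline{\mathcal W^N}$ with $\Phi(x)=-\tfrac{\beta}{2}\sum_{i<j}\log(x_j-x_i)$ (extended by $+\infty$ off the chamber), and \cite[Thm.~3.1]{CL} provides the unique strong solution taking values in $\overline{\mathcal W^N}$. The positive probability of $\tau<\infty$ for $\beta<1$ follows from a one-dimensional comparison: the process $R_{ij}(t)=X_{j}(t)-X_i(t)$ dominates a squared Bessel-type process of effective dimension $\beta+1<2$ (after absorbing the repulsive contributions from particles $k\ne i,j$ via localization and the reflection principle), and such a process hits $0$ in finite time with positive probability.

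The main obstacle is the multivalued SDE step for $0<\beta<1$: verifying the hypotheses of C\'epa--L\'epingle (maximal monotonicity of $\partial\Phi$, nonexplosion, and matching the reflection on $\partial\overline{\mathcal W^N}$ with the singular drift in the interior) is the only nonstandard piece, but it is carried out in \cite{CL}, and all the remaining steps are routine localization and It\^o-calculus exercises.
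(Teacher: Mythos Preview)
The paper does not prove this proposition at all: it is stated as a known result with references to \cite[Proposition~4.3.5]{AGZ} for $\beta\ge 1$ and \cite[Theorem~3.1]{CL} for $0<\beta<1$, and your outline tracks precisely those two sources (truncation plus the logarithmic Lyapunov function for $\beta\ge 1$, the multivalued SDE framework for $0<\beta<1$). So your approach is correct and coincides with the paper's (cited) one.

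One small wording issue worth fixing: in the $0<\beta<1$ collision argument you write that $R_{ij}$ ``dominates'' a Bessel-type process of dimension $\beta+1<2$. Domination from above by something that hits zero does not force $R_{ij}$ to hit zero; in fact the contributions from the other particles push the gap \emph{apart}, so naive comparison goes the wrong way. The actual argument (as in \cite{CL}) is by localization: on an event of positive probability where the remaining particles stay bounded away, the extra drift is bounded and the gap behaves like a Bessel process of dimension $<2$, which reaches zero with positive probability. Your parenthetical ``via localization'' suggests you have this in mind, but the word ``dominates'' should be replaced by a statement about comparison after localizing.
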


We write $D^N=D([0,\infty),\rr^N)$ for the space of right-continuous paths with left limits taking
values in $\rr^N$ and endow it with the usual Skorokhod topology (see e.g. \cite{EK}).

\begin{theorem} \label{theorem_DBM}
Fix $\theta\ge 1/2$ and let $\eps>0$ be a small parameter. Let $N$--dimensional stochastic
processes $Y^N_\eps(t)=(Y^N_\eps(t)_1,\dots,Y^N_\eps(t)_N)$ be defined through
$$
(Y^N_\eps(t))_i= \eps^{1/2}\Big((X^N_{disc})_{N+1-i}\Big(\eps^{-1}\frac{t}{\theta}\Big)-\eps^{-1}t\Big),
\quad i=1,\dots,N,
$$
where $(X^N_{disc})_i$ is $i$-th coordinate of the process $X^N_{disc}$. Suppose that as $\eps\to
0$, the initial condition $Y^N_\eps(0)$ converges to a point $Y(0)$ in the interior of
$\overline{\mathcal{W}^N}$. Then the process $Y^N_{\eps}(t)$ converges in the limit
$\eps\downarrow0$ in law in $D^N$ to the $\beta=2\theta$-Dyson Brownian motion, that is, to the
unique strong solution of \eqref{gen_DBM} with $\beta=2\theta$.
\end{theorem}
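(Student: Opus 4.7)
My plan is to apply the standard martingale problem machinery: establish tightness of $\{Y^N_\eps\}_{\eps>0}$ in $D^N$, show that the infinitesimal generator $L^\eps$ of $Y^N_\eps$ converges on a rich class of test functions to the generator $L$ of the $\beta=2\theta$ Dyson Brownian motion, and conclude via the strong uniqueness in Proposition \ref{Prop_gen_DBM_thm} (applicable since $\beta=2\theta\geq 1$). Tightness is straightforward: by Proposition \ref{Prop_sumpoisson}, the total number of jumps of $X^N_{disc}$ on a time window of length $T/(\theta\eps)$ is $\mathrm{Poisson}(NT/\eps)$, with each rescaled jump of $Y^N_\eps$ having size $\eps^{1/2}$; the partition ordering forces $Y_1\le\cdots\le Y_N$ along the trajectory, so any subsequential limit is $\overline{\mathcal{W}^N}$-valued and, by Aldous' criterion, has continuous paths.

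Writing $\lambda_{N+1-i}=\eps^{-1}t+\eps^{-1/2}y_i$ and absorbing the $-\eps^{-1}t$ subtraction into a deterministic $-\eps^{-1/2}$ drift, the generator of $Y^N_\eps$ acting on $f\in C^\infty_c(\mathcal{W}^N)$ reads
\[
L^\eps f(y)=\frac{\eps^{-1}}{\theta}\sum_{i=1}^N q_{N+1-i}(\lambda)\bigl[f(y+\eps^{1/2}e_i)-f(y)\bigr]-\eps^{-1/2}\sum_i\partial_i f(y).
\]
A second-order Taylor expansion then reduces the convergence $L^\eps f\to Lf=\tfrac{1}{2}\sum_i\partial_i^2 f+\theta\sum_i\sum_{j\ne i}\tfrac{\partial_i f}{y_i-y_j}$ (uniform on compact subsets of $\mathcal{W}^N$) to establishing the asymptotic
\[
q_r(\lambda)=\theta+\theta^2\eps^{1/2}\sum_{m\neq r}\frac{1}{y_{N+1-r}-y_{N+1-m}}+O(\eps).
\]

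The main obstacle is this expansion of $q_r$. By Proposition \ref{prop_intensities_one_level}, $q_r=\tfrac{\J_\mu(1^N)}{\J_\lambda(1^N)}\,\JQ_{\mu/\lambda}(\mathfrak{r}_1)$ with $\mu=\lambda\sqcup(r,\lambda_r+1)$. Formula \eqref{eq_transition_explicit} combined with $a(k,j)=\lambda_k-\lambda_r-1\sim\eps^{-1/2}$ immediately gives $\JQ_{\mu/\lambda}(\mathfrak{r}_1)=\theta(1+O(\eps))$, since the two ratios in each bracket differ from $1$ by $O(1/a(k,j))$ with opposite signs, producing joint $O(a(k,j)^{-2})$ corrections. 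For the Jack ratio I apply Proposition \ref{proposition_alpha} box by box. The leg changes along column $\lambda_r+1$ contribute $\prod_{k<r}\tfrac{\lambda_k-\lambda_r-1+\theta(r-k)}{\lambda_k-\lambda_r-1+\theta(r-k+1)}$; the arm changes along row $r$, after grouping boxes by the value of $l(r,j')$ (piecewise constant on intervals determined by the gaps $\lambda_r-\lambda_s$, $s>r$) and telescoping, produce $\tfrac{\theta}{\lambda_r+(N-r+1)\theta}\prod_{s>r}\tfrac{\lambda_r-\lambda_s+\theta(s-r+1)}{\lambda_r-\lambda_s+\theta(s-r)}$; and the new box at $(r,\lambda_r+1)$ contributes $(\lambda_r+(N-r+1)\theta)/\theta$, which cancels the denominator of the row-$r$ factor exactly. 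The net result is the clean identity
\[
\frac{\J_\mu(1^N)}{\J_\lambda(1^N)}=\prod_{k<r}\frac{\lambda_k-\lambda_r-1+\theta(r-k)}{\lambda_k-\lambda_r-1+\theta(r-k+1)}\,\prod_{s>r}\frac{\lambda_r-\lambda_s+\theta(s-r+1)}{\lambda_r-\lambda_s+\theta(s-r)}.
\]
Each factor equals $1+\theta/(\lambda_r-\lambda_m)+O(\eps)$ for the corresponding index $m$, so the product equals $1+\theta\eps^{1/2}\sum_{m\neq r}\tfrac{1}{y_{N+1-r}-y_{N+1-m}}+O(\eps)$; combining with the $\JQ$ factor yields the claimed expansion of $q_r$.

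Finally, since $\beta=2\theta\geq 1$ Dyson Brownian motion almost surely does not reach the boundary of $\overline{\mathcal{W}^N}$ (Proposition \ref{Prop_gen_DBM_thm}), I localize the generator convergence up to stopping times keeping the process in compact subsets of $\mathcal{W}^N$; these stopping times tend to infinity in the limit, and the Ethier--Kurtz martingale problem framework then identifies the unique subsequential limit with the $\beta=2\theta$ Dyson Brownian motion, completing the proof.
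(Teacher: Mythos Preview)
Your rate expansion and the martingale-problem identification of subsequential limits are essentially the paper's Steps~1 and~3: the product formula you obtain for $\J_\mu(1^N)/\J_\lambda(1^N)$ is exactly \eqref{eq_rates_single_level} (the paper carries the $\JQ_{\mu/\lambda}$ factor along rather than estimating it separately, but the outcome is the same), and your localization away from $\partial\overline{\mathcal W^N}$ via the non-collision for $\beta\ge 1$ mirrors the stopping times $\tau_\delta$ of Step~3b.

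The genuine gap is tightness. Knowing from Proposition~\ref{Prop_sumpoisson} that the \emph{total} number of jumps on $[0,T]$ is $\mathrm{Poisson}(NT/\eps)$ controls the sum $\sum_i (Y^N_\eps)_i$, but it does not control the modulus of continuity of an individual coordinate. Each $(Y^N_\eps)_i$ carries a deterministic drift $-\eps^{-1/2}$, so on a window of length $\delta$ it moves down by $\eps^{-1/2}\delta\to\infty$ unless its \emph{own} jump rate is close to $\eps^{-1}$; your Poisson bound on total jumps says nothing about how those jumps are allocated among coordinates, and the ordering $Y_1\le\cdots\le Y_N$ does not prevent one coordinate from plunging while another surges. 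Aldous' criterion is not verified by what you wrote. The paper's Step~2 confronts exactly this issue: it uses the explicit rate formula \eqref{eq_rates_single_level} to bound the jump rate of $(Y^N_\eps)_i$ by $\eps^{-1}+C\eps^{-1/2}$ whenever the gap to the neighboring particle exceeds $\Delta/3$, couples $(Y^N_\eps)_i$ with a Poisson process of that rate, and argues inductively from $i=1$ upward (for the positive part) and from $i=N$ downward (for the negative part). You already have the rate expansion needed for this; what is missing is the per-coordinate coupling argument that turns it into a modulus-of-continuity bound.
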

\begin{rmk}
We believe that Theorem \ref{theorem_DBM} should hold for any $\theta>0$. However, the case
$0<\theta< 1/2$ seems to be harder and one would need additional arguments for it.
\end{rmk}

Let us first present a plan of the proof of Theorem \ref{theorem_DBM}. In Step 1 we study the
asymptotics of jump rates of $X^N_{disc}$ in the scaling limit of Theorem \ref{theorem_DBM}. In
Step 2 we prove the tightness of processes $Y^N_\eps$ as $\eps\to 0$. In Step 3 we prove that
subsequential limits of $Y^N_\eps$ solve the stochastic differential equation \eqref{gen_DBM}.
This fact and the uniqueness of the solution to \eqref{gen_DBM} yield together Theorem
\ref{theorem_DBM}.


\subsection{Step 1: Rates}
$Y^N_\eps(t)$ is a continuous time Markov process with state space $\overline{\mathcal{W}^N}$, a
(constant) drift of $-\eps^{-1/2}$ in each coordinate and jump rates
$$
p^N_\eps(y,y',t)=\theta^{-1}\,\eps^{-1}\,q_{\frac{t}{\theta}\eps^{-1}+\hat{y}\eps^{-1/2}\,\rightarrow\,\frac{t}{\theta}\eps^{-1}+\hat{y'}\eps^{-1/2}},
$$
 where $\hat{y}$, $\hat{y'}$ are the vectors (which we view as rows of Young diagrams) obtained from $y$, $y'$ by reordering the coordinates
in decreasing order and intensities $q_{\lambda\to\mu}$ are given in Proposition
\ref{prop_intensities_one_level}. If we write $y'\approx_\eps y$ for vectors $y'$, $y$ which differ
in exactly one coordinate with the difference being $\eps^{1/2}$, then $p^N_\eps(y,y',t)=0$ unless
$y'\approx_\eps y$. As we will see, in fact, $p^N_\eps(y,y',t)$ does not depend on $t$.

Now, take two sequences $y'\approx_\eps y$ with $y'_{N+1-i}-y_{N+1-i}=\eps^{1/2}$ for some fixed
$i\in\{1,2,\ldots,N\}$. Define Young diagrams $\lambda$ and $\mu$ via
$\lambda_l=\frac{t}{\theta}\eps^{-1}+y_{N+1-l}\eps^{-1/2}$,
$\mu_l=\frac{t}{\theta}\eps^{-1}+y'_{N+1-j}\eps^{-1/2}$, then $\mu_i=\lambda_i+1$ and
$\mu_l=\lambda_l$ for $j\ne i$. Also set $j=\lambda_i+1$, that is, $\mu=\lambda\sqcup(i,j)$.
\begin{lemma}
\label{Lemma_expansion_of_rates_single}
For sequences $y'\approx_\eps y$ differing in the $(N+1-i)$-th coordinate as above, we have in the limit $\eps\to 0$:
$$
p^N_\eps(y,y',t)= \eps^{-1}+ \eps^{-1/2} \Biggl(\sum_{j\neq i}
\frac{\theta}{y_{N+1-i}-y_{N+1-j}}\Biggr) + O(1),
$$
where the error $O(1)$ is uniform on compact subsets of the open Weyl chamber $\mathcal W^N$.
\end{lemma}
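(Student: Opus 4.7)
The plan is to derive a closed-form expression for the jump rate $q_{\lambda\to\mu}$ and then expand it in $\eps$. Writing $\mu=\lambda\sqcup(i,j)$ with $j=\lambda_i+1$, Proposition \ref{prop_intensities_one_level} gives $q_{\lambda\to\mu}=\frac{\J_\mu(1^N)}{\J_\lambda(1^N)}\,\JQ_{\mu/\lambda}(\rho_1)$, with $\JQ_{\mu/\lambda}(\rho_1)$ explicit from \eqref{eq_transition_explicit}; the ratio $\J_\mu(1^N)/\J_\lambda(1^N)$ is computable from Proposition \ref{proposition_alpha} by tracking the three types of boxes whose arm or leg length changes when $(i,j)$ is added: the new box itself (contributing $(N\theta+j-1-\theta(i-1))/\theta$), the $i-1$ boxes $(k,j)$ above it (whose leg increases by one), and the $\lambda_i$ boxes $(i,m)$ to its left (whose arm increases by one).

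The first step is to simplify the row-$i$ contribution, which a priori is a product of $\lambda_i\sim\eps^{-1}$ terms. Grouping the boxes $(i,m)$ by the index $r\ge i$ for which $\lambda_{r+1}<m\le\lambda_r$ (so that $\lambda'_m=r$) turns the inner product in $m$ into a telescoping product that collapses to a single ratio per value of $r$. After this collapse, the column-$j$ part of $\J_\mu/\J_\lambda$ cancels precisely against the first product in \eqref{eq_transition_explicit}, and a small reorganization yields the clean closed form
\begin{equation*}
q_{\lambda\to\mu}=\theta\prod_{k=1}^{i-1}\frac{\lambda_k-\lambda_i+\theta(i-k-1)}{\lambda_k-\lambda_i+\theta(i-k)}\cdot\prod_{r=i+1}^{N}\frac{\lambda_i-\lambda_r+\theta(r-i+1)}{\lambda_i-\lambda_r+\theta(r-i)}.
\end{equation*}
As a sanity check, summing this expression over all addable boxes returns $N\theta$, consistent with Proposition \ref{Prop_sumpoisson}. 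The identity also makes clear that $q_{\lambda\to\mu}$ depends only on the pairwise differences $\lambda_i-\lambda_l$, which explains the claimed $t$-independence of $p^N_\eps$.

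Once the closed form is in hand the asymptotic expansion is straightforward. Every factor has the shape $1\pm\theta/[(\lambda_l-\lambda_i)+\theta c]$ with bounded $c$, and the scaling $\lambda_l=\eps^{-1}t/\theta+y_{N+1-l}\eps^{-1/2}$ gives $\lambda_i-\lambda_l=(y_{N+1-i}-y_{N+1-l})\eps^{-1/2}$, of order $\eps^{-1/2}$ with magnitude uniformly bounded away from zero on compact subsets of $\mathcal{W}^N$. A direct Taylor expansion then yields $1+\theta\eps^{1/2}/(y_{N+1-i}-y_{N+1-l})+O(\eps)$ for each of the $N-1$ factors, so that multiplying together and using $p^N_\eps=\theta^{-1}\eps^{-1}q_{\lambda\to\mu}$ gives the asserted expansion, with uniform error on compact subsets of $\mathcal{W}^N$.

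The main obstacle is the algebraic step producing the closed form. Because the row-$i$ product has $\sim\eps^{-1}$ many factors, a term-by-term expansion is infeasible, and the telescoping collapse (together with the subsequent cancellation against the $\JQ$ factor) is what makes the problem tractable. Once the structural identity is established, both the final Taylor expansion and the uniformity of the remainder on compact subsets of the Weyl chamber are routine.
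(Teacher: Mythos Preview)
Your proposal is correct and follows essentially the same route as the paper's proof: both compute $q_{\lambda\to\mu}$ from Proposition~\ref{prop_intensities_one_level}, collapse the $O(\eps^{-1})$-term row-$i$ product by grouping boxes according to the column heights $\lambda'_m$ (equivalently, by the index $r$ with $\lambda_{r+1}<m\le\lambda_r$), observe the cancellation of the column-$j$ part of $\J_\mu(1^N)/\J_\lambda(1^N)$ against one of the two factors in \eqref{eq_transition_explicit}, and arrive at the identical closed form~\eqref{eq_rates_single_level} before Taylor expanding. Your presentation makes the telescoping and the $t$-independence more explicit, but the substance is the same.
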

\begin{proof}
Using Proposition \ref{prop_intensities_one_level} we have \eq\label{eq_intensity_single_analysis}
\begin{split}
p^N_\eps(y,y',t)=\frac{\eps^{-1}}{\theta}\,\frac{J_\mu(1^N)}{J_\lambda(1^N)}\tilde{J}_{\mu/\lambda}(\rho_1)
=\frac{\eps^{-1}}{\theta}\,\frac{\prod_{\square\in\mu} \dfrac{N\theta+a'(\square)-\theta
l'(\square)}{a(\square)+\theta l(\square) +\theta}}
 {\prod_{\square\in\lambda} \dfrac{N\theta+a'(\square)-\theta l'(\square)}{a(\square)+\theta l(\square) +\theta}} \\
 \cdot\,\theta \prod_{l=1}^{i-1} \frac{a(l,j)+\theta (i-l+1)}{a(l,j)+\theta (i-l)} \cdot
  \frac{a(l,j)+1+\theta (i-l-1)}{a(l,j)+1+\theta (i-l)} \\
=\frac{\eps^{-1}}{\theta}\,\prod_{k=1}^{j-1}
\frac{j-k-1+\theta(\lambda'_k-i+1)}{j-k+\theta(\lambda'_k-i+1)}
\cdot \prod_{l=1}^{i-1} \frac{\lambda_l-j+\theta(i-l)}{\lambda_l-j+\theta(i-l+1)} \\
\cdot\,((N-i+1)\theta+j-1) \prod_{l=1}^{i-1} \frac{\lambda_l-j+\theta (i-l+1)}{\lambda_l-j+\theta
(i-l)} \cdot
  \frac{\lambda_l-j+1+\theta (i-l-1)}{\lambda_l-j+1+\theta (i-l)} \\
=\frac{\eps^{-1}}{\theta}\,\prod_{k=1}^{j-1} \frac{j-k-1+\theta(\lambda'_k-i+1)}{j-k+\theta(\lambda'_k-i+1)}((N-i+1)\theta+j-1) \\
 \cdot\prod_{l=1}^{i-1} \frac{({y}_{N+1-l}-{y}_{N+1-i})\eps^{-1/2}+\theta(i-l-1)}{({y}_{N+1-l}-{y}_{N+1-i})\eps^{-1/2}+\theta (i-l)}.
\end{split}
\en
Now, for any $y$ the corresponding Young diagram $\lambda$ has $\lambda_N$ columns of length $N$,
$(\lambda_{N-1}-\lambda_N)$ columns of length $(N-1)$, $(\lambda_{N-2}-\lambda_{N-1})$ columns of
length $(N-2)$ etc. Therefore, for such $y$ the latter expression for $p^N_\eps(y,y',t)$ simplifies
to
\eq\label{eq_rates_single_level}
\begin{split}
&\eps^{-1}\prod_{r=0}^{N-i-1}
\frac{j-1-\lambda_{N-r}+\theta(N-i-r+1)}{j-1-\lambda_{N-r}+\theta(N-i-r)}
\prod_{l=1}^{i-1} \frac{({y}_{N+1-l}-{y}_{N+1-i})\eps^{-1/2}+\theta(i-l-1)}{({y}_{N+1-l}-{y}_{N+1-i})\eps^{-1/2}+\theta (i-l)} \\
&=\eps^{-1}\prod_{k=i+1}^N
\frac{({y}_{N+1-i}-{y}_{N+1-k})\eps^{-1/2}+\theta(k-i+1)}{({y}_{N+1-i}-{y}_{N+1-k})\eps^{-1/2}+\theta(k-i)}
\prod_{l=1}^{i-1}
\frac{({y}_{N+1-l}-{y}_{N+1-i})\eps^{-1/2}+\theta(i-l-1)}{({y}_{N+1-l}-{y}_{N+1-i})\eps^{-1/2}+\theta
(i-l)}\\
&=\eps^{-1}+ \eps^{-1/2} \left(\sum_{j\neq i}
 \frac{\theta}{y_{N+1-i}-y_{N+1-j}}\right) + O(1),
\end{split}
\en
with remainder $O(1)$ being uniform over $y$ such that $|y_{N+1-i}-y_{N+1-j}|>\delta$ for $j\ne i$
and a fixed $\delta>0$.
\end{proof}

\subsection{Step 2: Tightness}
\label{Section_tightness_single}

Let us show that the family $Y^N_\eps$, $\eps\in(0,1)$ is tight on $D^N$. To this end, we aim to
apply the necessary and sufficient condition for tightness of  \cite[Corollary 3.7.4]{EK} and need
to show that, for any fixed $t\geq0$, the random variables $Y^N_\eps(t)$ are tight on $\rr^N$ as
$\eps\downarrow 0$ and that for every $\Delta>0$ and $T>0$ there exists a $\delta>0$ such that
\[
\limsup_{\eps\downarrow0}\;\pp\Big(\sup_{0\leq s<t\leq T,t-s<\delta}
\big|(Y^N_\eps)_i(t)-(Y^N_\eps)_i(s)\big|>\Delta\Big)<\Delta,\;\; i=1,2,\ldots,N.
\]
Let us explain how to obtain the desired controls on $(Y^N_\eps(t))_+$ (the vector of positive
parts of the components of $Y^N_\eps(t)$) and
\eq\label{tight}
\sup_{0\leq s<t\leq T,t-s<\delta} \big((Y^N_\eps)_i(t)-(Y^N_\eps)_i(s)\big),\quad i=1,2,\ldots,N.
\en

To control $(Y^N_\eps(t))_+$ and the expressions in \eqref{tight}, we proceed by induction over the index
of the coordinates in $(Y^N_\eps(t))$. For the first coordinate $(Y^N_\eps)_1$ the explicit formula
\eqref{eq_rates_single_level} in step 1 shows that the jump rates of the process $(Y^N_\eps)_1$ are
bounded above by $\eps^{-1}$. Hence, a comparison with a Poisson process with jump rate
$\eps^{-1}$, jump size $\eps^{1/2}$ and drift $-\eps^{-1/2}$ shows that $((Y^N_\eps)_1(t))_+$ and
the expression in \eqref{tight} for $i=1$ behave in accordance with the conditions of Corollary
3.7.4 in \cite{EK} as stated above. Next, we consider $(Y^N_\eps)_i$ for some
$i\in\{2,3,\ldots,N\}$. In this case, the formula \eqref{eq_rates_single_level} in step 1 shows
that, whenever the spacing $(Y^N_\eps)_i-(Y^N_\eps)_{i-1}$ exceeds $\Delta/3$, the jump rate of
$(Y^N_\eps)_i$ is bounded above by
\eq \label{eq_rates_bound}
\eps^{-1}+\sum_{j=1}^{i-1} \frac{\theta\,\eps^{-1/2}}{(Y^N_\eps)_i(t)-(Y^N_\eps)_j(t)}+C(\Delta)
\leq\eps^{-1}+\frac{3(i-1)\theta}{\Delta}\,\eps^{-1/2}+C(\Delta).
\en

Let us show that $(Y^N_\eps)_i$ can be coupled with a process $R_\eps$ with jump size $\eps^{1/2}$,
jump rate given by the right-hand side of the last inequality and drift $-\eps^{-1/2}$, so that,
whenever $(Y^N_\eps)_i-(Y^N_\eps)_{i-1}$ exceeds $\Delta/3$ and $(Y^N_\eps)_i$ has a jump to the
right, the Poisson process has a jump to the right as well.

To do this, recall that (by definition) the law of the jump times of $Y^N_\eps$ can be described
as follows. We take $N$ independent exponential random variables $a_1,\dots,a_N$
with means $r_j(Y^N_\eps)$, $j=1,\dots,N$ given by \eqref{eq_intensity_single_analysis}. If we let
$k$ be the index for which $a_k=\min(a_1,\dots,a_N)$, then at time $a_k$ the $k$-th particle
(that is, $(Y^N_\eps)_k$) jumps. After this jump we repeat the procedure again to determine the next jump.

Let $M$ denote the right-hand side of \eqref{eq_rates_bound} and consider in each step an additional
independent exponential random variable $b$ with mean $M-r_i(Y^N_\eps)$ if
$(Y^N_\eps)_i-(Y^N_\eps)_{i-1}$ exceeds $\Delta/3$ and with mean $M$ otherwise. Now, instead
of considering $\min(a_1,\dots,a_N)$, we consider $\min(a_1,\dots,a_N,b)$. If the minimum is given by $b$,
then no jump happens and the whole procedure is repeated. Now, we \emph{define} the jump times of
process $R_\eps$ to be all times when the clock of the $i$-th particle rings provided that
$(Y^N_\eps)_i-(Y^N_\eps)_{i-1}$ exceeds $\Delta/3$, and also all times when the auxilliary random variable
$b$ constitutes the minimum. One readily checks that $R_\eps$ is given by a Poisson jump process of constant
intensity $M$ and drift $-\eps^{-1/2}$.

Thus, the convergence of $R_\eps$ to Brownian motion with drift $3(i-1)\theta/\Delta$ and the
necessary and sufficient conditions of Corollary 3.7.4 in \cite{EK} for them imply the
corresponding conditions for $((Y^N_\eps)_i(t))_+$ and the quantities in \eqref{tight} for the
value of $i$ under consideration.

\medskip

It remains to observe that $(Y^N_\eps(t))_-$ (the vector of negative parts of the components of
$Y^N_\eps$) and
\[
\sup_{0\leq s<t\leq T,t-s<\delta} -\big((Y^N_\eps)_i(t)-(Y^N_\eps)_i(s)\big),\quad i=1,2,\ldots,N
\]
can be dealt with in a similar manner (but considering the rightmost particle first and moving from
right to left). Together these controls yield the conditions of \cite[Corollary 3.7.4]{EK}.

\medskip

We also note that, since the maximal size of the jumps tends to zero as $\eps\downarrow 0$, any
limit point of the family $Y^N_\eps$, $\eps\in(0,1)$ as $\eps\downarrow 0$ must have continuous
paths (see e.g.\ \cite[Theorem 3.10.2]{EK}).

\begin{rmk}
 Note that in the proof of the tightness result the condition $\theta\ge 1/2$ is not used.
\end{rmk}

\subsection{Step 3: SDE for subsequential limits}

\label{Section_SDE_DBM_limits}

Throughout this section we let $Y^N$ be an arbitrary limit point of the family  $Y^N_\eps$ as
$\eps\downarrow 0$. Our goal is to identify $Y^N$ with the solution of \eqref{gen_DBM}. We pick a
sequence of $Y^N_\eps$ which converges to $Y^N$ in law, and by virtue of the Skorokhod Embedding
Theorem (see e.g. Theorem 3.5.1 in \cite{Du}) may assume that all processes involved are defined on
the same probability space and that the convergence holds in the almost sure sense. In the rest of
this section all the limits $\eps\to 0$ are taken along this subsequence.

\bigskip





Define the set of functions
\[
\mathcal{F}:=\big\{f\in C_0^\infty(\overline{\mathcal{W}^N})|\;\exists\,\delta>0:\;f(\mathbf
x)=0\;\,\mathrm{whenever}\;\,\mathrm{dist}(\mathbf
x,\partial\overline{\mathcal{W}^N})\leq\delta\big\},
\]
where $\partial\overline{\mathcal{W}^N}$ denotes the boundary of $\overline{\mathcal{W}^N}$ and
$\mathrm{dist}$ stands for the $L^\infty$ distance:
$$
 \mathrm{dist}(\mathbf
x,\partial\overline{\mathcal{W}^N})=\min_{i=1,\dots,N-1} |x_{i+1}-x_i|.
$$

For a function $f\in\mathcal F$ we consider the processes \eq\label{fMG}
M^f(t):=f(Y^N(t))-f(Y^N(0))-\int_0^t \sum_{1\leq i\neq j\leq N} \frac{\theta}{Y^N_i(s)-Y^N_j(r)}
f_{y_i}(Y^N(r))\,\mathrm{d}r -\frac{1}{2}\,\int_0^t \sum_{i=1}^N f_{y_iy_i}(Y^N(r))\,\mathrm{d}r.
\en Here, $f_{y_i}$ ($f_{y_iy_i}$ resp.) stands for the first (second resp.) partial derivative of
$f$ with respect to $y_i$.

In Step 3a we show that the processes in \eqref{fMG} are martingales and identify their quadratic
covariations. In step 3b we use the latter results to derive the SDEs for the processes $Y^N_1$,\dots, $Y^N_N$.

\medskip

{\noindent \bf \emph{Step 3a.}} We now fix an $f\in\mathcal{F}$ and consider the family of
martingales
\eq\label{fMGdisc}
\begin{split}
M^f_\eps(t):=f(Y^N_\eps(t))-f(Y^N_\eps(0))-\int_0^t \Big(\sum_{i=1}^N &-\eps^{-1/2}\,f_{y_i}(Y^N_\eps(r)) \\
&+\sum_{y'\approx_\eps Y^N(r)} p^N_\eps(Y^N_\eps(r),y',s)(f(y')-f(Y^N(s)))\Big)\,\mathrm{d}r,\quad
\eps>0.
\end{split}
\en
Lemma \ref{Lemma_expansion_of_rates_single} implies that the integrand in \eqref{fMGdisc}
behaves asymptotically as
\[
\frac{1}{2}\sum_{i=1}^N f_{y_iy_i}(Y^N_\eps(r))+\sum_{i=1}^N
b_i(Y^N_\eps(r)) f_{y_i}(Y^N_\eps(r)) + O(\eps^{1/2}),
\]
where $b_{i}(y)=\sum_{j\neq i} \frac{\theta}{y_{i}-y_j}$. Note also that, for any fixed function $f\in\mathcal{F}$, the error terms can be bounded uniformly for all sequences $y'\approx_\eps y$ as above, since $f$ and all its partial derivatives are bounded and vanish in the neighborhood of the boundary $\partial\overline{{\mathcal W}^N}$ of
$\overline{{\mathcal W}^N}$.

By taking the limit of the corresponding martingales $M^f_\eps$ for a fixed $f\in\mathcal{F}$ and
noting that their limit $M^f$ can be bounded uniformly on every compact time interval, we conclude
that $M^f$ must be a martingale as well.

\begin{lemma} \label{Lemma_covariance_single} For any function $g\in\mathcal F$, the quadratic variation of $M^g$ is given by
\[
\big\langle M^g\big\rangle(t) = \sum_{j=1}^N \int_0^t g_{y_j}(Y^N(r))^2\,\mathrm{d}r.
\]
\end{lemma}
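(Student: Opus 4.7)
The plan is to obtain the formula by identifying $\langle M^g\rangle$ as the limit of the predictable quadratic variations $\langle M^g_\eps\rangle$ of the discrete-level martingales $M^g_\eps$ from \eqref{fMGdisc}. Since the constant drift $-\eps^{-1/2}$ in each coordinate of $Y^N_\eps$ is deterministic, it does not contribute to the quadratic variation, and the standard formula for a martingale built from a pure-jump Markov process gives
\[
\langle M^g_\eps\rangle(t) = \int_0^t \sum_{y'\approx_\eps Y^N_\eps(r)} p^N_\eps(Y^N_\eps(r), y', r)\,\bigl(g(y')-g(Y^N_\eps(r))\bigr)^2\,\mathrm{d}r.
\]

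Each allowed transition $y\to y'$ changes exactly one coordinate, say the $i$-th, by $+\eps^{1/2}$. A first-order Taylor expansion gives $g(y')-g(y) = \eps^{1/2}\,g_{y_i}(y) + O(\eps)$, where the error is uniform on compact subsets of $\WWo$ because $g$ is smooth and compactly supported. Since $g\in\mathcal F$ has support separated from $\partial\WW$ by some $\delta>0$, these estimates hold uniformly on the set $\{g\neq 0\}\cup\{g(\cdot+\eps^{1/2}e_i)\neq 0\}$ once $\eps$ is small enough. Combining with Lemma \ref{Lemma_expansion_of_rates_single}, which yields $p^N_\eps(y,y+\eps^{1/2}e_i,r) = \eps^{-1} + O(\eps^{-1/2})$ uniformly on the support of $g$, the integrand reduces to
\[
\sum_{i=1}^N g_{y_i}(Y^N_\eps(r))^2 + O(\eps^{1/2}),
\]
the error being uniform in $r$. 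Using the almost sure uniform convergence $Y^N_\eps\to Y^N$ on compact time intervals (from the Skorokhod embedding set up at the start of Step 3) and the continuity of $g_{y_j}$, bounded convergence gives
\[
\langle M^g_\eps\rangle(t)\ \longrightarrow\ \sum_{j=1}^N \int_0^t g_{y_j}(Y^N(r))^2\,\mathrm{d}r \quad\text{a.s., for each fixed }t.
\]

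It then remains to pass to the limit in the identity $\ev\!\bigl[(M^g_\eps(t))^2 - \langle M^g_\eps\rangle(t)\,\big|\,\mathcal F_s\bigr] = (M^g_\eps(s))^2 - \langle M^g_\eps\rangle(s)$ to conclude that the limit above is indeed the quadratic variation of $M^g$ (continuity of $M^g$, which follows from the continuity of any subsequential limit noted at the end of Step 2, ensures the predictable and optional quadratic variations coincide). The main obstacle is supplying the uniform integrability needed for this passage. This is handled by noting that on the support of $g$ the drift coefficients $b_i(y)=\sum_{j\neq i}\theta/(y_i-y_j)$ are bounded (coordinates are separated there by at least $\delta$), so the generator $\mathcal L_\eps g$ is bounded uniformly in $\eps$; together with the boundedness of $g$ itself, this makes $M^g_\eps(t)$ uniformly bounded on every bounded time interval, from which the desired $L^2$ convergence and identification follow.
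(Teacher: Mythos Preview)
Your proof is correct and takes a more direct route than the paper's. The paper works with the \emph{optional} quadratic variation $[M^g_\eps]$, i.e.\ the pathwise sum of squared jumps of $M^g_\eps$; to evaluate its limit it partitions the jump times of $Y^N_\eps$ according to which coordinate jumped, modifies this partition near the boundary (where $g$ vanishes anyway) so that each class has asymptotic density $\eps^{-1}$, and then invokes a law-of-large-numbers argument (using Proposition~\ref{Prop_sumpoisson} and Lemma~\ref{Lemma_expansion_of_rates_single}) to convert the jump sums into time integrals. You instead use the \emph{predictable} quadratic variation $\langle M^g_\eps\rangle$, for which the carr\'e-du-champ formula directly gives an integral of $\sum_{y'} p^N_\eps(y,y')(g(y')-g(y))^2$ against time, so a Taylor expansion combined with Lemma~\ref{Lemma_expansion_of_rates_single} is already enough. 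Both arguments conclude the same way, exploiting the uniform boundedness of $M^g_\eps$ on compact time intervals to pass to the limit in the martingale property of $(M^g_\eps)^2$ minus the relevant quadratic variation. Your route avoids the combinatorial analysis of jump-time densities and is shorter; the paper's route is more pathwise in spirit but is otherwise more labor-intensive here.
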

\begin{proof}
We start by determining the limit of the quadratic variation processes $[M^g_\eps]$ of $M^g_\eps$ as $\eps\to0$. For each $\eps>0$ and $j=1,\dots,N$ define $\mathcal S^{j}_{\eps}$ as the (random) set of all
times when the $j$-th coordinate of $Y^N_\eps$ jumps. Note that the sets $\mathcal
S^{j}_{\eps}$ are pairwise disjoint and their union $\bigcup_{j=1}^N S^j_\eps$ is a Poisson point
process of intensity $\eps^{-1}N$ (see Proposition \ref{Prop_sumpoisson}).

Recall that the quadratic variation process $[M^g_\eps](t)$ of $M^g_\eps$ is given by the sum of
squares of the jumps of the process $M^g_\eps$ (see e.g.\ \cite[Proposition 8.9]{CT}) and conclude
\begin{equation}
\label{eq_quadratic_prelimit} [M^g_\eps](t)=\sum_{j=1}^N \sum_{r\in \mathcal S^{j}_{\eps} \cap
[0,t]} \big(\eps^{1/2}g_{y_j}(Y^N_\eps(r)+O(\eps)\big)^2,
\end{equation}
with uniform error term $O(\eps)$. Suppose that the support of $g\in\mathcal F$ is contained in the
set
$$
\{\y\in\overline{\mathcal W^N}\mid {\rm dist}(\y,\partial\overline{\mathcal{W}^N})\ge2\delta\}
$$
and consider new $N$ pairwise disjoint sets $\widehat{\mathcal S}^{j}_{\eps}$, $j=1,\dots,N$ satisfying
$\bigcup_{j=1}^N \widehat{\mathcal S}^{j}_{\eps} = \bigcup_{j=1}^N {\mathcal S}^{j}_{\eps}$ and defined
through the following procedure. Take any $r\in \bigcup_{j=1}^N S^{j}_{\eps}$ and suppose that $r\in \mathcal S^{k}_{\eps}$.
If ${\rm dist}(Y^N_\eps(r),\partial\overline{\mathcal{W}^N})\ge\delta$, then put $r\in \widehat
{\mathcal S}^k_\eps$. Otherwise, take an independent random variable $\kappa$ sampled from the
uniform distribution on the set $\{1,2,\dots,N\}$ and put $r\in \widehat {\mathcal S}^\kappa_\eps$.
The definition implies that, for small enough $\eps$,
\begin{equation}\label{eq_quadratic_prelimit2}
[M^g_\eps](t)=\sum_{j=1}^N \sum_{r\in \widehat {\mathcal S}^{j}_{\eps} \cap [0,t]} \big(\eps^{1/2}g_{y_j}(Y^N_\eps(r)+O(\eps)\big)^2,
\end{equation}
Now, take any two reals $a<b$. We claim that the sets $\widehat{\mathcal S}^{k}_{\eps}$ satisfy the
following property almost surely: \eq\label{eq_x11} \lim_{\eps\to 0} \eps\,|\widehat {\mathcal
S}^{k}_{\eps} \cap [a,b]| = b-a. \en Indeed, the Law of Large Numbers for Poisson Point Processes
implies \eq\label{eq_x12} \lim_{\eps\to 0} \eps\,\Big|\Big(\bigcup_{k=1}^N \widehat {\mathcal
S}^{k}_{\eps}\Big)\cap[a,b]\Big|=N(b-a). \en On the other hand, Lemma
\ref{Lemma_expansion_of_rates_single} implies the following \emph{uniform} asymptotics as $\eps\to
0$:
$$
\pp(t\in \widehat{\mathcal S}^{k}_\eps\mid\Theta_{<t},t\in\bigcup_{j=1}^N \widehat {\mathcal S}^{j}_\eps)=\frac{1}{N}+o(1),
$$
where $\Theta_{<t}$ is the $\sigma$-algebra generated by the point process $\widehat {\mathcal
S}^{k}_\eps$, $j=1,\dots,N$ up to time $t$. Therefore, the conditional distribution of $|\widehat
{\mathcal S}^{k}_{\eps} \cap [a,b]|$ given $|(\bigcup_{k=1}^N \widehat {\mathcal S}^{k}_{\eps})
\cap [a,b]|$ can be sandwiched between two binomial distributions with parameters
$\frac{1}{N}\pm C(\eps)$, where $\lim_{\eps\to 0} C(\eps)=0$ (see e.g. \cite[Lemma 1.1]{LSS}).
Now, \eqref{eq_x12} and the Law of Large Numbers for the Binomial Distribution imply \eqref{eq_x11}.

It follows that the sums in \eqref{eq_quadratic_prelimit2} approximate the corresponding integrals and we obtain
\eq \label{quad_var}
\lim_{\eps\downarrow0}\,[M^g_\eps](t) = \sum_{j=1}^N \int_0^t g_{y_j}(Y^N(r))^2\,\mathrm{d}r.
\en
Finally, note that for each $g\in\mathcal F$, both $M^g_\eps(t)^2$ and $[M^g_\eps](t)$ are uniformly
integrable on compact time intervals (this can be shown for example by another comparison with a
Poisson jump process). By \cite[Chapter 7, Problem 7]{EK} it follows that the process
$$
M^g(t)^2-\sum_{j=1}^N \int_0^t g_{y_j}(Y^N(r))^2\,\mathrm{d}r= \lim_{\eps\to 0}
\left(M^g_\eps(t)^2-[M^g_\eps](t)\right)
$$
is a martingale. The lemma readily follows.
\end{proof}

\medskip

{\bf \noindent\textit{Step 3b.}} We are now ready to derive the SDEs for the processes
$Y_1^N,\dots,Y_N^N$. Define the stopping times $\tau_\delta$, $\delta>0$ by
\eq\label{eq_stopping}
\tau_\delta = \inf\{t\geq0:\;Y_{i}^N(t)-Y_{i-1}^N(t)\le\delta\; \mathrm{for\;some\;}i\}
\wedge\inf\{t\geq0:\;|Y_i^N(t)|\geq 1/\delta\;\mathrm{for\;some\;}i\}.
\en

Our next aim is to derive the stochastic integral equations for the processes
$$
\big(Y^N_{j}(t\wedge\tau_\delta)\big)_{j=1,\dots,N}.
$$

Let $f_j$, $j=1,\dots,N$, be an arbitrary function from $\mathcal F$ such that $f_j(\y)=y_j$ for
$\y$ inside the box $|y_j|\le 1/\delta$ and such that ${\rm
dist}(\y,\partial\overline{\mathcal{W}^N})\ge \delta\}$. The results of Step 3a imply that the
processes $M^{{f}_j}$ are martingales. Note that the definition of stopping times $\tau_\delta$
imply that on the time interval $[0,\tau_\delta]$ the processes $M^{{f}_j}$ and $M^{y_j}$ almost
surely coincide. At this point we can use Lemma \ref{Lemma_covariance_single} to conclude that
$$
Y^N_j(t\wedge\tau_\delta))-(Y^N_j(0)) -\int_{0}^{t\wedge\tau_\delta} \sum_{n\neq j} \frac{\theta}{Y^N_j(r)-Y^N_n(r)} \,\mathrm{d}r,\qquad j=1,\dots,N
$$
are martingales with quadratic variations given by $t\wedge\tau_\delta$ and with the quadratic
covariation between any two of them being zero. We may now apply the Martingale Representation
Theorem in the form of \cite[Theorem 3.4.2]{KS} to deduce the existence of independent standard
Brownian motions $W_1,\ldots,W_N$ (possibly on an extension of the underlying probability space)
such that
\begin{equation}
Y^N_j(t\wedge\tau_\delta))-(Y^N_j(0)) -\int_{0}^{t\wedge\tau_\delta} \sum_{n\neq j}
\frac{\theta}{Y^N_j(r)-Y^N_n(r)} \,\mathrm{d}r = \int_{0}^{t\wedge\tau_\delta} \mathrm{d}W_j(s),\;\;\;
j=1,\dots,N.
\end{equation}
To finish the proof of Theorem \ref{theorem_DBM}, it remains to observe that Proposition \ref{Prop_gen_DBM_thm} implies
$$
 \lim_{\delta\to 0} \tau_\delta=\infty
$$
with probability one. \qedhere

\begin{rmk} An alternative way to derive the system of SDEs for $Y^N_j$ is to use \cite[Chapter 5,
Proposition 4.6]{KS}. This will be important for us in Section \ref{Section_multi_SDE}, since
generalization of Lemma \ref{Lemma_covariance_single} to the multilevel setting is not
straightforward.
\end{rmk}

\subsection{Zero initial condition}

A refinement of the proof of Theorem \ref{theorem_DBM} involving Proposition
\ref{proposition_convergence_fixed_time} allows us to deal with the limiting process, which is
started from $0\in\overline{{\mathcal W}^N}$.

\begin{corollary}\label{Corollary_zero_initial}
Fix $\theta\ge 1$. In the notations of Theorem \ref{theorem_DBM} and assuming the convergence of
the initial conditions to $0\in\overline{\mathcal{W}^N}$, the process $X^N_{disc}$ converges in
the limit $\eps\downarrow0$ in law in $D^N$ to the $\beta=2\theta$-Dyson Brownian motion started
from $0\in\overline{{\mathcal W}^N}$, that is, to the unique strong solution of \eqref{gen_DBM}
with $\beta=2\theta$ and $Y(0)=0\in\overline{{\mathcal W}^N}$.
\end{corollary}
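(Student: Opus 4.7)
The plan is to reduce to Theorem \ref{theorem_DBM} by shifting the time origin by a small parameter $s>0$ and then sending $s\downarrow 0$. The tightness portion of the proof of Theorem \ref{theorem_DBM} (Section \ref{Section_tightness_single}) relies only on comparisons between the jump rates and auxiliary Poisson clocks and never uses that the initial condition lies in the interior of $\overline{\mathcal{W}^N}$. Thus the family $\{Y^N_\eps\}_{\eps\in(0,1)}$ is tight in $D^N$ as $\eps\downarrow 0$, and every subsequential limit $Y^N$ has continuous paths with $Y^N(0)=0$.

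For any fixed $s>0$, Proposition \ref{proposition_convergence_fixed_time}, applied to $X^N_{disc}$ started from the empty diagram, identifies the weak limit of $Y^N_\eps(s)$ as the $\beta=2\theta$ Hermite ensemble $\mu_s$ of variance $s$. The Vandermonde factor $\prod_{i<j}(y_j-y_i)^{2\theta}$ in the density \eqref{eq_beta_Hermite_density} shows that $\mu_s$ assigns no mass to $\partial\overline{\mathcal{W}^N}$. Next, fix $s>0$ and set $\tilde Y^N_\eps(t):=Y^N_\eps(s+t)$. By the Markov property of $X^N_{disc}$, conditionally on $Y^N_\eps(s)=y$ the process $\tilde Y^N_\eps$ has the same law as $Y^N_\eps$ started from $y$. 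Whenever $y$ lies in the interior of $\overline{\mathcal{W}^N}$, Theorem \ref{theorem_DBM} gives weak convergence of $\tilde Y^N_\eps$ to the $\beta=2\theta$-DBM $X^y$ started at $y$, and for $\theta\geq 1$ (so $\beta\geq 2\geq 1$) Proposition \ref{Prop_gen_DBM_thm} additionally yields that $y\mapsto\mathrm{Law}(X^y)$ is continuous on the interior with respect to weak convergence in $D^N$. Coupling with $Y^N_\eps(s)\Rightarrow\mu_s$ via the Skorokhod representation theorem then promotes this to the unconditional statement that $\tilde Y^N_\eps$ converges in law to the DBM $X^{(s)}$ with initial distribution $\mu_s$.

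On the other hand, the $\beta=2\theta$-DBM $X$ started from $0$ exists as a strong solution on $[0,\infty)$ by Proposition \ref{Prop_gen_DBM_thm} (using $\theta\geq 1$), and its time-$s$ marginal is $\mu_s$, so by the strong Markov property $(X(s+t))_{t\geq 0}$ is equal in law to $X^{(s)}$. Combining, $(Y^N(s+t))_{t\geq 0}\stackrel{d}{=}(X(s+t))_{t\geq 0}$ for every $s>0$. Since both $Y^N$ and $X$ have continuous paths on $[0,\infty)$ with value $0$ at time $0$, letting $s\downarrow 0$ along a countable dense set and invoking path-continuity at $0$ forces $Y^N\stackrel{d}{=}X$ on $[0,\infty)$, proving the corollary.

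The main obstacle is the continuity of the map $y\mapsto\mathrm{Law}(X^y)$ on the interior of $\overline{\mathcal{W}^N}$ and the associated Skorokhod-coupling upgrade of Theorem \ref{theorem_DBM} from deterministic to weakly convergent random initial conditions: one must check that the convergence in Theorem \ref{theorem_DBM} is sufficiently uniform over compact subsets of the interior. This is the step where the strengthened hypothesis $\theta\geq 1$ enters naturally, since for $\beta=2\theta\geq 2$ the singular drift in \eqref{gen_DBM} retains enough regularity near the boundary for the martingale-problem arguments of Section \ref{Section_SDE_DBM_limits} to yield the required uniformity, whereas in the regime $1/2\leq\theta<1$ a finer analysis near the set of near-collisions would be required.
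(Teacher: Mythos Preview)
Your approach is essentially the same as the paper's: shift the time origin by $s>0$, use Proposition \ref{proposition_convergence_fixed_time} to identify the time-$s$ marginal as $\mu_s$, apply Theorem \ref{theorem_DBM} on $[s,\infty)$, and then let $s\downarrow 0$. The paper compresses all of this into two sentences, while you spell out the mechanics (tightness, Markov property, Skorokhod coupling, path continuity at $0$) more carefully.

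One difference worth noting: the paper does not go through the conditioning-plus-continuity route you describe. Rather than invoking Theorem \ref{theorem_DBM} pointwise in $y$ and then integrating via continuity of $y\mapsto\mathrm{Law}(X^y)$, the paper simply re-runs the proof of Theorem \ref{theorem_DBM} (tightness, martingale problem, uniqueness) directly with the random time-$s$ initial condition. Since $\mu_s$ is supported in the interior, the martingale-problem identification of Step 3 goes through verbatim, and the ``obstacle'' you flag in your last paragraph never arises. This is a cleaner way to handle random initial conditions than the Skorokhod-coupling upgrade.

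Relatedly, your diagnosis of where the hypothesis $\theta\ge 1$ enters is off. The continuity of $y\mapsto\mathrm{Law}(X^y)$ on the interior of $\overline{\mathcal{W}^N}$ is a consequence of the local Lipschitz property of the drift there and holds for every $\theta>0$; likewise the martingale-problem argument of Section \ref{Section_SDE_DBM_limits} already works for $\theta\ge 1/2$. The restriction $\theta\ge 1$ is not used in that step. Where it is relevant is in the final identification: you need to know that the $\beta=2\theta$-DBM started from $0$ is well-posed and that its time-$s$ law is exactly $\mu_s$ (equivalently, that the DBM started from $\mu_t$ converges to the DBM started from $0$ as $t\downarrow 0$). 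For $\theta\ge 1$ the process leaves the boundary instantaneously and never returns (Proposition \ref{Prop_gen_DBM_thm}), which makes this continuity at the boundary initial point straightforward. Your invocation of the fact that the time-$s$ marginal of the DBM from $0$ equals $\mu_s$ is the right way to close the argument, and is arguably tidier than the paper's appeal to $\mu_t\to\delta_0$.

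A minor caveat: both your argument and the paper's implicitly treat the case where $X^N_{disc}$ starts from the empty diagram, since that is what Proposition \ref{proposition_convergence_fixed_time} covers. The corollary as stated allows any sequence of initial conditions converging to $0$; extending Proposition \ref{proposition_convergence_fixed_time} to that generality requires a small additional argument which neither proof supplies.
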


\begin{proof}
Using Proposition \ref{proposition_convergence_fixed_time} and arguing as in the proof of Theorem
\ref{theorem_DBM} one obtains the convergence of the rescaled versions of the process $X^N_{disc}$
on every time interval $[t,\infty)$ with $t>0$ to the solution of \eqref{gen_DBM} starting
according to the initial distribution of \eqref{eq_beta_Hermite_density}. Since
\eqref{eq_beta_Hermite_density} converges to the delta--function at the origin as $t\to 0$, we
identify the limit points of the rescaled versions of $X^N_{disc}$ with the solution of
\eqref{gen_DBM} started from $0\in\overline{{\mathcal W}^N}$.
\end{proof}

\section{Existence and uniqueness for multilevel DBM} \label{Section_multi_unique}

The aim of this section is to prove an analogue of Proposition \ref{Prop_gen_DBM_thm} for the
multilevel Dyson Brownian motion.

\begin{theorem}\label{theorem_intDBM}
For any $N\in\nn$ and $\theta> 1$ (i.e.\ $\beta=2\theta> 2$), and for any initial condition $X(0)$
in the interior of $\overline{\mathcal{G}^N}$, the system of SDEs \eq\label{eq_intDBM_SDE}
\begin{split}
  \mathrm{d}X^k_i(t) =
  \Biggl( \sum_{m\neq i} \frac{1-\theta}{X^k_i(t)-X^k_m(t)}-\sum_{m=1}^{k-1} \frac{1-\theta}{X_i^k(t)-X_m^{k-1}(t)}\Biggr)\,\mathrm{d}t + \mathrm{d}W_i^k ,\;\;\;1\leq i\leq k\leq N
\end{split}
\en with $W_i^k$, $1\leq i\leq k\leq N$ being independent standard Brownian motions, possesses a
unique weak solution taking values in the Gelfand--Tsetlin cone $\GG$.
\end{theorem}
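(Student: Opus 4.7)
The plan is to construct a weak solution by combining localization with a Girsanov change of measure that removes the attractive within-level drift, after which the system decouples into a family of Bessel-type SDEs that can be solved inductively over levels. The central difficulty is that for $1<\theta<2$ the solution actually reaches some of the drift singularities, so the direct Lyapunov approach used e.g.\ in \cite[Proposition 4.3.5]{AGZ} for single-level Dyson Brownian motion is unavailable.

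\textbf{Localization.} For each $\delta>0$ introduce the stopping time
\[
\sigma_\delta := \inf\bigl\{t\geq 0 : \mathrm{dist}(X(t),\partial\overline{\mathcal{G}^N})\leq \delta\bigr\} \wedge \inf\bigl\{t\geq 0 : \|X(t)\|\geq 1/\delta\bigr\}.
\]
On $[0,\sigma_\delta]$ the drift coefficients in \eqref{eq_intDBM_SDE} are bounded and Lipschitz, so classical SDE theory yields a pathwise unique strong solution up to $\sigma_\delta$, with $\sigma_\delta$ non-decreasing as $\delta\downarrow 0$. Thus everything reduces to proving $\sigma_\delta\uparrow\infty$ almost surely.

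\textbf{Girsanov reduction.} Set
\[
Z^k_i(s):=\sum_{m\neq i}\frac{1-\theta}{X^k_i(s)-X^k_m(s)},
\]
which is bounded on $[0,\sigma_\delta]$, so the exponential
\[
\mathcal{E}_\delta(t):=\exp\Bigl(-\sum_{k,i}\int_0^{t\wedge\sigma_\delta}Z^k_i(s)\,\mathrm{d}W^k_i(s)-\tfrac{1}{2}\sum_{k,i}\int_0^{t\wedge\sigma_\delta}|Z^k_i(s)|^2\,\mathrm{d}s\Bigr)
\]
is a true martingale and defines an equivalent measure $\widetilde{\P}_\delta$ under which, up to $\sigma_\delta$, the process satisfies the decoupled SDE
\[
\mathrm{d}X^k_i=(\theta-1)\sum_{m=1}^{k-1}\frac{1}{X^k_i-X^{k-1}_m}\,\mathrm{d}t+\mathrm{d}\widetilde{W}^k_i.
\]
Under $\widetilde{\P}_\delta$ the particles on any given level no longer interact. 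One now solves inductively in $k$: level $1$ is standard Brownian motion, and given the trajectories of levels $1,\dots,k-1$, each coordinate $X^k_i$ evolves independently of the other coordinates on level $k$ as a Brownian motion with drift whose behaviour near each singularity $X^k_i=X^{k-1}_m$ matches that of a Bessel process of dimension $2\theta-1>1$. Classical theory of Bessel-type SDEs (see e.g.\ \cite[Chapter XI]{RY}) provides weak existence and weak uniqueness for each such one-dimensional equation.

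\textbf{Non-explosion and the main obstacle.} The delicate step is to show $\sigma_\delta\uparrow\infty$ under $\widetilde{\P}_\delta$, from which the same conclusion under $\P$ follows by the equivalence of measures on $[0,\sigma_\delta]$. Proceeding by induction on the level under $\widetilde{\P}_\delta$: cross-level gaps $X^k_i-X^{k-1}_m$ can be dominated below by Bessel processes of dimension $2\theta-1>1$, which do not collapse to zero in a way producing finite-time explosion, and interlacing then propagates this control to within-level gaps; a standard Gronwall estimate on $\sum_{k,i}(X^k_i)^2$ rules out escape to infinity. Patching the local solutions as $\delta\downarrow 0$ yields a global weak solution under $\P$. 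Weak uniqueness is then obtained because any weak solution is related, via the same invertible Girsanov transformation on each $[0,\sigma_\delta]$, to the Bessel-reduced system, for which weak uniqueness holds level by level. I expect the principal technical obstacle to be selecting a Lyapunov function that bounds the expected quadratic variation $\sum_{k,i}\int_0^{t\wedge\sigma_\delta}|Z^k_i|^2\,\mathrm{d}s$ uniformly in $\delta$, thereby ensuring consistency of the Girsanov change of measure in the limit; this is precisely where the assumption $\theta>1$ becomes essential, since for $\theta\leq 1$ the analogous estimates diverge and additional local-time corrections are needed.
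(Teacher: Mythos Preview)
Your localization stops the process at distance $\delta$ from $\partial\overline{\mathcal{G}^N}$ and the whole argument hinges on the claim $\sigma_\delta\uparrow\infty$. This is precisely what fails for $1<\theta<2$. After your Girsanov change the cross-level gap $R=X^k_i-X^{k-1}_{i'}$ satisfies
\[
\mathrm{d}R=\frac{\theta-1}{R}\,\mathrm{d}t+(\text{locally bounded})\,\mathrm{d}t+\mathrm{d}\widetilde W^k_i-\mathrm{d}\widetilde W^{k-1}_{i'},
\]
so $R/\sqrt2$ is (after a further bounded Girsanov) a Bessel process of dimension $\theta$, \emph{not} $2\theta-1$: you have forgotten that $X^{k-1}_{i'}$ also carries an independent Brownian motion, doubling the quadratic variation of the gap. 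A Bessel process of dimension $\theta\in(1,2)$ hits zero almost surely, so $\lim_{\delta\downarrow 0}\sigma_\delta<\infty$ with positive probability and the patching argument never produces a global solution. You noted in your own preamble that singularities are reached when $1<\theta<2$, but your scheme does not accommodate this.

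The paper avoids this by replacing $\sigma_\delta$ with the first time $\tau_\delta$ that \emph{three} particles on adjacent levels come within $\delta$ of each other. Between successive random times $\sigma_{n-1}<\sigma_n\le\tau_\delta$ one records which (disjoint) pairs of adjacent-level particles are currently close, and the Girsanov change is designed to kill \emph{all} drift terms except the single singular one inside each such pair; the system then decouples into independent Brownian motions and Bessel processes of dimension $\theta$, for which pathwise uniqueness is classical. Two-particle collisions are thus absorbed into the Bessel description rather than stopping the clock. One then proves, via explicit quadratic Lyapunov functions of the form $\frac12\bigl((X^{k-1}_i-X^k_i)^2+(X^k_{i+1}-X^{k-1}_i)^2\bigr)$ and a Feller-type comparison, that triple collisions never occur, i.e.\ $\tau_0=\infty$; this is where $\theta>1$ is genuinely used. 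Your proposed uniform bound on $\sum_{k,i}\int_0^{t\wedge\sigma_\delta}|Z^k_i|^2\,\mathrm{d}s$ would not salvage the argument, because the obstruction is not integrability of the Girsanov density but the fact that the stopped domain is reached.
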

\begin{proof}

Given a stochastic process $X(t)$ taking values in $\overline{\mathcal{G}^N}$, for any fixed
$\delta>0$, let $\widehat \tau_{\delta}(X)$ denote
$$
 \widehat \tau_{\delta}[X]=\inf\{t\ge 0 \, :\, |X^k_i(t)-X^{k'}_{i'}(t)|\leq\delta,\, |k-k'|\le 1\},
$$
that is, the first time when two particles on adjacent levels are at the distance at most
$\delta$. Further, we
 define the stopping time $\tau_\delta[X]$ as the first time when \emph{three} particles on adjacent levels are at the
 distance at most $\delta$:
\begin{multline*}
\tau_{\delta}[X]=\inf\biggl\{t\ge 0\, : \, |X^k_i(t)-X^{k'}_{i'}(t)|\leq\delta,\,
|X^k_i(t)-X^{k''}_{i''}(t)|\leq\delta,\\  \text{ for } (k',i')\ne (k'',i'') \text{ such that }
|k-k'|=|k-k''|=1 \biggr\}.
\end{multline*}


Figure \ref{Figure_6events} shows schematically the six possible triplets of close to each other
particles at time $\tau_\delta$.

\begin{figure}[ht]
    \centering
  \includegraphics[width=15cm]{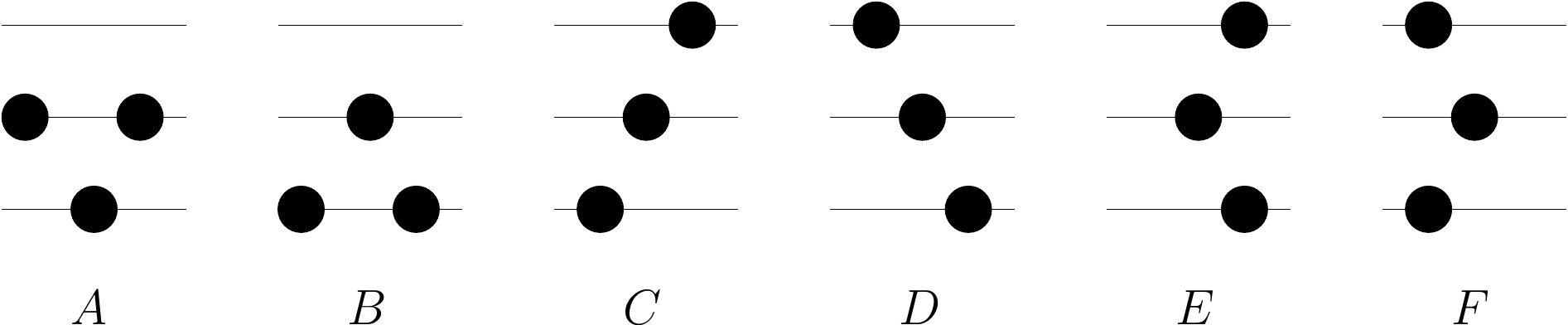}
    \caption{Six possible triplets of close to each other particles: One of these situations occurs
    at time $\tau_\delta$.}
    \label{Figure_6events}
\end{figure}

The following proposition will be proved in Section \ref{section_proof_of_uniqueness_integral}.

\begin{proposition} \label{Proposition_uniqueness_integral} For any $N\in\nn$, $\delta>0$ and $\theta> 1$ (i.e.\ $\beta=2\theta>2$), and for any initial condition $X(0)$ in the interior of $\overline{\mathcal{G}^N}$, the system of stochastic integral equations
\begin{equation}
\label{eq_stopped_integral}
\begin{split}
 X^k_i(t)-X^k_i(0)=
 \int_0^{t\wedge\tau_\delta[X]}\Big( \sum_{m\neq i} \frac{1-\theta}{X^k_i(s)-X^k_m(s)}-\sum_{m=1}^{k-1} \frac{1-\theta}{X_i^k(t)-X_m^{k-1}(t)}\Big)\,\mathrm{d}s
 + W_i^k(t\wedge\tau_\delta[X]), \\
1\leq i\leq k\leq N
\end{split}
\end{equation}
with $W_i^k$, $1\leq i\leq k\leq N$ being independent standard Brownian motions, possesses a unique weak solution.
\end{proposition}

In view of Proposition \ref{Proposition_uniqueness_integral} and the Kolmogorov extension theorem
(see e.g.\ \cite[Theorem 6.16]{Kal}), we can consider a product probability space which supports
independent  weak solutions of \eqref{eq_stopped_integral} for all $\delta>0$ and all initial
conditions in the interior of $\overline{\mathcal{G}^N}$. Choosing a sequence $\delta_l$,
$l\in\nn$ decreasing to zero, we can define on this space a process $X$ such that the law of
$X(t\wedge\tau_{\delta_1}[X])$, $t\ge0$ coincides with the law of the solution of
\eqref{eq_stopped_integral} with $\delta=\delta_1$ and initial condition $X(0)$, the law of
$X((\tau_{\delta_1}[X]+t)\wedge\tau_{\delta_2}[X])$, $t\ge0$ is given by the law of the solution
of \eqref{eq_stopped_integral} with $\delta=\delta_2$ and initial condition
$X(\tau_{\delta_1[X]})$ etc. The uniqueness part of Proposition
\ref{Proposition_uniqueness_integral} now shows that, for each $l\in\nn$, the law of
$X(t\wedge\tau_{\delta_l[X]})$, $t\ge0$ is that of the weak solution of
\eqref{eq_stopped_integral} with $\delta=\delta_l$. Since the paths of $X$ are continuous by
construction and hence $\lim_{l\to\infty} \tau_{\delta_l}[X]=\tau_0[X]$, we have constructed a
weak solution of the system
\begin{equation}
\label{eq_stopped_integral_0}
\begin{split}
 X^k_i(t)-X^k_i(0)=
 \int_0^{t\wedge\tau_0[X]}\left( \sum_{m\neq i} \frac{1-\theta}{X^k_i(s)-X^k_m(s)}-\sum_{m=1}^{k-1} \frac{1-\theta}{X_i^k(t)-X_m^{k-1}(t)}\right)\,\mathrm{d}s
 + W_i^k(t\wedge\tau_0[X]) ,\\
1\leq i\leq k\leq N
\end{split}
\end{equation}
with $W_i^k$, $1\leq i\leq k\leq N$ being independent standard Brownian motions as before. In addition, we note that the law of the solution to \eqref{eq_stopped_integral_0} is uniquely determined. Indeed, for any $\delta>0$, the process $X$ stopped at time $\tau_\delta$ would give a solution to \eqref{eq_stopped_integral}. Uniqueness of the latter for any $\delta>0$ now readily implies the uniqueness of the weak solution to \eqref{eq_stopped_integral_0}. At this point, Theorem \ref{theorem_intDBM} is a consequence of the following statement which will be proved in
Section \ref{section_stopping_time_infinity}.

\begin{proposition} \label{proposition_multlilevel_stopping} Suppose that $X(0)$ lies in the interior of the cone $\overline{\mathcal{G}^N}$ and let $X$ be a solution to \eqref{eq_stopped_integral_0}.
\begin{enumerate}[(a)]
\item If $\theta> 1$, then almost surely
 $\tau_0[X]=\infty.$
\item If $\theta \ge 2$, then almost surely
 $ \widehat \tau_{0}[X]=\infty.$
\end{enumerate}
\end{proposition}

\end{proof}

\subsection{Proof of Proposition \ref{Proposition_uniqueness_integral}}
\label{section_proof_of_uniqueness_integral} Our proof of Proposition
\ref{Proposition_uniqueness_integral} is based on an application of the Girsanov's theorem that
will dramatically simplify the SDE in consideration. We refer the reader to \cite[Section 3.5]{KS}
and \cite[Section 5.3]{KS} for general information about the Girsanov's theorem and weak solutions
to SDEs.

\smallskip

We start with the uniqueness part. Fix $N=1,2,\dots$, $\theta\ge 1$, $\delta>0$, and let $X$ be a
solution of \eqref{eq_stopped_integral}. Let $\mathcal I$ denote the set of $N(N-1)/2$ pairs
$(k,i)$, $k=1,\dots,N$, $i=1,\dots,k$ which represent different coordinates (particles) in the
process $X$. We will subdivide $\mathcal I$ into disjoint singletons and pairs of the neighboring
particles, that is, pairs of the form $((k,i),(k-1,i))$ or $((k,i),(k-1,i-1))$. We call any such
subdivision a \emph{pair-partition} of $\mathcal I$. An example is shown in Figure
\ref{Fig_pairs}.

\begin{figure}[h]
\begin{center}
 {\scalebox{0.8}{\includegraphics{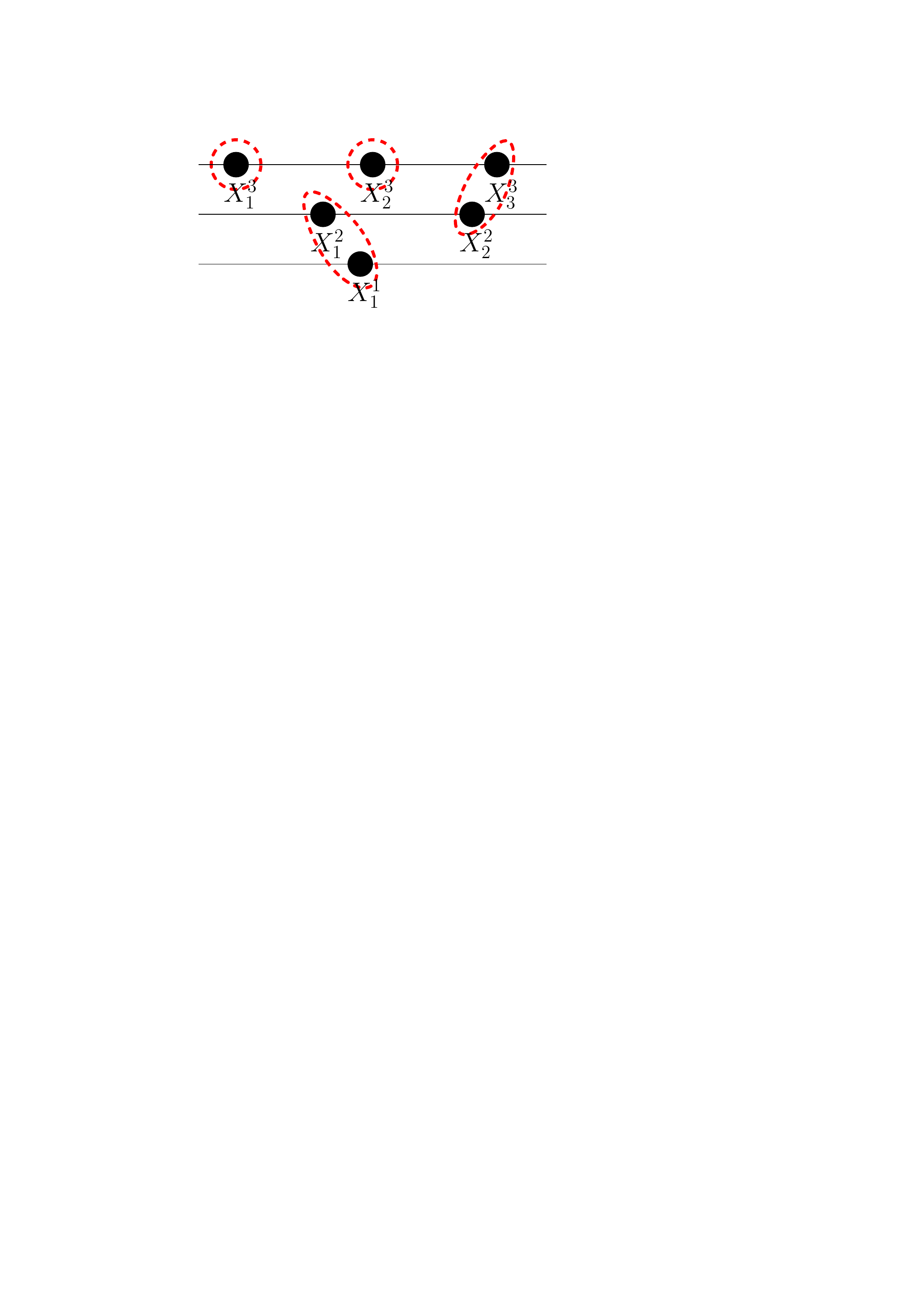}}}
\end{center}
\caption{A pair-partition with $N=3$, two pairs and two singletons.} \label{Fig_pairs}
\end{figure}

\begin{lemma}
\label{lemma_exist_stop}
 There exists a sequence of stopping times
 $0=\sigma_0\le \sigma_1\le \sigma_2\le \dots \le\tau_\delta[X]$ and (random) pair--partitions $A_1,
 A_2,\dots$ such that
 \begin{itemize}
 \item for any $n=1,2,\dots$, any $\sigma_{n-1}\le t < \sigma_n$, any two pairs
 $(k,i)$, $(k',i')$, $1\le i\le k\le N$, $1\le i'\le k'\le N$, $|k-k'|\le 1$,
 we have $|X^{k}_i(t)-X^{k'}_{i'}(t)|\ge \delta/2$ unless the pair $((k,i),(k',i'))$
 is one of the pairs of the pair-partition $A_n$, and
 \item for any $n=1,2,\dots$, either $\sigma_n=\tau_\delta$ or $|X^k_i(\sigma_{n+1})-X^k_i(\sigma_n)|\ge\delta/2$ for some $(k,i)$.
 \end{itemize}
\end{lemma}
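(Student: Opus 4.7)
The construction is done inductively: set $\sigma_0 = 0$. Given $\sigma_n$ with $\sigma_n < \tau_\delta[X]$, define
\[
\sigma_{n+1} = \tau_\delta[X] \wedge \inf\bigl\{t > \sigma_n :\; \max_{(k,i)} |X^k_i(t) - X^k_i(\sigma_n)| \ge \delta/2\bigr\}.
\]
By the continuity of the sample paths, $\sigma_{n+1} > \sigma_n$ whenever $\sigma_n < \tau_\delta[X]$, and the second bullet of the lemma is immediate from this definition.

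The key structural fact used to build $A_{n+1}$ is the matching property: at every time $t < \tau_\delta[X]$, the set of pairs of indices $(k,i), (k',i')$ with $|k-k'| \le 1$ and $|X^k_i(t) - X^{k'}_{i'}(t)| \le \delta/2$ is a matching on $\mathcal I$. Indeed, on genuinely adjacent levels $|k-k'|=1$, any index having two $\delta$-close neighbors would trigger $\tau_\delta[X]$ by definition, hence (since $\delta/2 < \delta$) no index has two $\delta/2$-close adjacent-level neighbors. On the same level, the interlacing relations defining $\overline{\mathcal{G}^N}$ force an intermediate index on the adjacent level to lie strictly between any two same-level indices; if the latter are within $\delta/2$, the intermediate one is within $\delta/2 < \delta$ of each, again triggering $\tau_\delta[X]$. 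In particular, only ``neighboring'' pairs of the form $((k,i),(k-1,i))$ or $((k,i),(k-1,i-1))$ can be $\delta/2$-close at $t < \tau_\delta[X]$, and each instantaneous close-pair matching extends trivially to a pair-partition of $\mathcal I$ via singletons.

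We then take $A_{n+1}$ to be a pair-partition of $\mathcal I$ extending the random edge collection
\[
M_{n+1} = \bigcup_{t \in [\sigma_n, \sigma_{n+1})} \bigl\{\{(k,i),(k',i')\} :\; |k-k'| \le 1,\; |X^k_i(t) - X^{k'}_{i'}(t)| < \delta/2\bigr\}.
\]
The first bullet of the lemma reduces to showing that $M_{n+1}$ is itself a matching, which is the main obstacle. The close-pair graph is a matching at every fixed time, but a single index could a priori swap close partners between times $t_1, t_2 \in [\sigma_n, \sigma_{n+1})$ without ever having two simultaneously close neighbors --- an event that does not in itself trigger $\tau_\delta[X]$. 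Such a swap forces the distance from the index to its old partner to increase from below $\delta/2$ at $t_1$ to above $\delta$ at $t_2$ (to avoid $\tau_\delta[X]$ at $t_2$), a change exceeding $\delta/2$; however, the crude bound of less than $\delta/2$ on per-coordinate displacements across the interval only yields a displacement of $>\delta/4$ for one endpoint. Closing the argument thus requires refining $\sigma_{n+1}$ to also register the first passage times of pair-distances across the thresholds $\delta/2$ and $\delta$, and then verifying via careful bookkeeping across successive such crossings that every ``structural'' stopping is eventually accompanied by a single-coordinate displacement of at least $\delta/2$ since $\sigma_n$, thereby preserving the second bullet of the lemma.
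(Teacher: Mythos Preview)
Your proposal has a genuine gap that you yourself identify: defining $A_{n+1}$ as an extension of the \emph{union} $M_{n+1}$ of close-pair edges over the whole interval $[\sigma_n,\sigma_{n+1})$ forces you to show that this union is a matching, and the partner-swapping scenario you describe is a real obstruction. Your proposed fix---inserting additional stopping times at threshold crossings and doing bookkeeping---is plausible but is only sketched, and even then you would need to argue that only finitely many such auxiliary stops occur between consecutive ``genuine'' $\sigma_n$'s, which is itself nontrivial given the singular drifts.

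The paper avoids this difficulty entirely by a two-threshold trick that you should adopt. Instead of building $A_{n+1}$ from the whole interval, the paper takes $A_{n+1}$ to be the set of pairs at distance $\le\delta$ \emph{at the single time} $\sigma_n$; this is automatically a matching by your own fixed-time argument. The stopping time $\sigma_{n+1}$ is then defined not via coordinate displacements, but as the first time some pair \emph{not} in $A_{n+1}$ reaches distance $\le\delta/2$. The first bullet is then immediate: on $[\sigma_n,\sigma_{n+1})$ any pair within $\delta/2$ is either already in $A_{n+1}$ or would have triggered $\sigma_{n+1}$. For the second bullet, the new pair that triggers $\sigma_{n+1}$ had distance $>\delta$ at time $\sigma_n$ (else it would be in $A_{n+1}$) and distance $\le\delta/2$ at time $\sigma_{n+1}$, so the pair-distance changed by at least $\delta/2$; one endpoint therefore moved by at least $\delta/4$. (The constant $\delta/2$ stated in the lemma is thus slightly optimistic---the argument actually yields $\delta/4$---but this is immaterial for the only use of the second bullet, in the proof that $\lim_n\sigma_n=\tau_\delta$.)

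The moral: rather than fixing $\sigma_{n+1}$ first and then trying to assemble a compatible $A_{n+1}$, choose $A_{n+1}$ at a single time using the coarser threshold $\delta$, and let $\sigma_{n+1}$ be dictated by the finer threshold $\delta/2$. The gap between the two thresholds is exactly what absorbs the partner-swapping ambiguity.
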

\begin{proof} Define the (random) sets $\mathcal B^k_i$, $\mathcal D^k_i$ by setting
$$
 \mathcal B^k_i=\{ 0\le t\le \tau_\delta[X]\mid |X^k_i(t)-X^{k-1}_i(t)|\le\delta\},\quad \quad
 \mathcal D^k_i=\{ 0\le t\le \tau_\delta[X]\mid |X^k_i(t)-X^{k-1}_{i-1}(t)|\le\delta\}.
$$
Note that these sets are closed due to the continuity of the trajectories of $X$, which in turn is a consequence of \eqref{eq_stopped_integral}. Define $A(t;\delta)$ as a pair-partition such that pair $((k,i),(k-1,i))$ belongs to $A(t;\delta)$ iff $t\in \mathcal B^k_i$ and pair $((k,i),(k-1,i-1))$ belongs to $A(t;\delta)$ iff $t\in\mathcal D^k_i$. Similarly define $A(t,\delta/2)$. The definition of $\tau_\delta[X]$ implies that such pair-partitions
$A(t;\delta/2)\subset A(t;\delta)$ are well-defined for any $0\le t\le \tau_\delta[X]$.

Now we define $\sigma_n$ and $A_n$ inductively. First, set $\sigma_0=0$.  Further, for
$n=1,2,\dots$ let $A_n=A(\sigma_{n-1};\delta)$ and set $\sigma_n$ to be the minimal $t$ satisfying
$\tau_\delta[X]\ge t\ge \sigma_{n-1}$ and such that $A(t;\delta/2)$ has a pair which
$A(\sigma_{n-1};\delta)$ does not have. Since the sets $\mathcal B^k_i$, $\mathcal D^k_i$ are
closed, either such $t$ exists or no new pairs are added after time $\sigma_{n-1}$ and up to time
$\tau_\delta[X]$. In the latter case we set $\sigma_n=\tau_\delta$.
\end{proof}

Next, we fix a $T>0$, set $I_n=[\sigma_{n-1},\sigma_{n})$, $n=1,2,\dots$ and apply a Girsanov
change of measure (see e.g. \cite[Theorem 5.1, Chapter 3]{KS} and note that Novikov's condition as
in \cite[Corollary 5.13, Chapter 3]{KS} is satisfied due to the boundedness of the integrand in
the stochastic exponential) with a density of the form \eq \exp\Biggl(\sum_{n=0}^\infty
\sum_{1\leq i\leq k\leq N} \Big(\int_0^T b^k_{i,n}(t)\,\mathbf{1}_{I_n}(t)\,\mathrm{d}W^k_i(t) -
\frac{1}{2}\int_0^T (b^k_{i,n}(t))^2\,\mathbf{1}_{I_n}(t)\,\mathrm{d}t\Big)\Biggr), \en so that
under the new measure $\widetilde P$ for every fixed $k$, $i$, $n$ and $0\le t\le T$:
\begin{equation}
\label{eq_measure_changed_SDEs}
X^k_i(t\wedge\sigma_n)-X^k_i(t\wedge\sigma_{n-1})=\begin{cases}
\int\limits_{t\wedge\sigma_{n-1}}^{t\wedge\sigma_{n}}
\Big(\mathrm{d}\tilde{W}^k_i-\frac{(1-\theta)\,\mathrm{d}s}{X^k_i(s)-X^{k-1}_{i-1}(s)}
\Big), \;\;\; \text{ if }\;\;\; ((k,i),(k-1,i-1))\in A_n,\\
\int\limits_{t\wedge\sigma_{n-1}}^{t\wedge\sigma_{n}}\,
\Big(\mathrm{d}\tilde{W}^k_i-\frac{(1-\theta)\mathrm{d}s}{X^k_i(s)-X^{k-1}_i(s)}\Big),
\;\;\;\text{ if }\;\;\; ((k,i),(k-1,i))\in A_n,\\
\int\limits_{t\wedge\sigma_{n-1}}^{t\wedge\sigma_{n}}
\mathrm{d}\tilde{W}^k_i,\;\;\;\text{otherwise,}
\end{cases}
\end{equation}
where $\tilde{W}^k_i$, $1\leq i\leq k\leq N$ are independent standard Brownian motions under the
measure $\widetilde P$. We claim that the solution of the resulting system of SDEs
\eqref{eq_measure_changed_SDEs} is pathwise unique on $[0,\lim_{n\to\infty} \sigma_n)$ (that is,
for any two strong solutions of \eqref{eq_measure_changed_SDEs} adapted to the same Brownian
filtration, the quantities $\lim_{n\to\infty} \sigma_n$ for the two solutions will be the same
with probability one and the trajectories of the two solutions on $[0,\lim_{n\to\infty} \sigma_n)$
will be identical with probability one). Indeed, on each time interval $\sigma_{n-1}\le t \le
\sigma_n$, the system \eqref{eq_measure_changed_SDEs} splits into $|A_n|$ non-interacting systems
of SDEs each of which consists of one equation
\begin{equation}
\label{eq_decoupled_1}
 X^k_i(t\wedge\sigma_n)-X^k_i(t\wedge\sigma_{n-1})=
 \int\limits_{t\wedge\sigma_{n-1}}^{t\wedge\sigma_{n}} \mathrm{d}\tilde{W}^k_i
\end{equation}
if $(k,i)$ is a singleton in $A_n$, or of a system of two equations
\begin{equation}\label{eq_decoupled_2_1}
\begin{split}
\big(X^k_i(t\wedge\sigma_n)-X^{k-1}_{i'}(t\wedge\sigma_n)\big)
-\big(X^k_i(t\wedge\sigma_{n-1})-X^k_{i'}(t\wedge\sigma_{n-1})\big)
=\int\limits_{t\wedge\sigma_{n-1}}^{t\wedge\sigma_{n}}\big(\mathrm{d}\tilde{W}^k_i-\mathrm{d}\tilde{W}^{k-1}_{i'}\big) \\
-\int\limits_{t\wedge\sigma_{n-1}}^{t\wedge\sigma_{n}}\frac{(1-\theta)\,\mathrm{d}s}{X^k_i(s)-X^{k-1}_{i'}(s)},
\end{split}
\end{equation}
\begin{equation}
\label{eq_decoupled_2_2}
X^{k-1}_{i'}(t\wedge\sigma_n)-X^k_i(t\wedge\sigma_{n-1})=
\int\limits_{t\wedge\sigma_{n-1}}^{t\wedge\sigma_{n}} \mathrm{d}\tilde{W}^k_i
\qquad\qquad\qquad\qquad\qquad\qquad\qquad\qquad\qquad
\end{equation}
if $((k,i),(k-1,i'))$ is a pair in $A_n$. Therefore, one can argue by induction over $n$ and, once
pathwise uniqueness of the triplet $((X(t\wedge\sigma_{n-1}):\,t\ge0),\sigma_{n-1},A_{n-1})$ is
established, appeal to the pathwise uniqueness for \eqref{eq_decoupled_1},
\eqref{eq_decoupled_2_2} and \eqref{eq_decoupled_2_1} (the latter being the equation for the
Bessel process of dimension $\theta>1$, see \cite[Section 1, Chapter XI]{RY}) to deduce the
pathwise uniqueness of the triplet $((X(t\wedge\sigma_n):\,t\ge0),\sigma_n,A_n)$.

\medskip

The SDEs in \eqref{eq_measure_changed_SDEs} also allow us to prove the following statement.

\begin{lemma}
\label{lemma_stop_time_converge}
The identity $\lim_{n\to\infty} \sigma_n=\tau_\delta[X]$ holds with probability one.
\end{lemma}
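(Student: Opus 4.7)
The approach is a short contradiction argument that rests only on path continuity of $X$ and the pigeonhole principle. Suppose, aiming for a contradiction, that the event
$$
E := \{\sigma_\infty < \tau_\delta[X]\}, \qquad \sigma_\infty := \lim_{n\to\infty}\sigma_n,
$$
has positive probability (the limit exists on the whole probability space by the monotonicity of the sequence $\sigma_n$ built in Lemma \ref{lemma_exist_stop}).

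On the event $E$ one has $\sigma_n<\tau_\delta[X]$ for every $n$, since otherwise the sequence would be eventually constant equal to $\tau_\delta[X]$, forcing $\sigma_\infty=\tau_\delta[X]$. Applying the second bullet of Lemma \ref{lemma_exist_stop} inductively, for each $n\geq1$ there is an index $(k_n,i_n)$ with $1\leq i_n\leq k_n\leq N$ such that
$$
|X^{k_n}_{i_n}(\sigma_{n+1})-X^{k_n}_{i_n}(\sigma_n)|\geq\delta/2.
$$
The index set $\{(k,i):1\le i\le k\le N\}$ has only $N(N+1)/2$ elements, so by the pigeonhole principle some fixed pair $(k,i)$ occurs as $(k_n,i_n)$ for infinitely many $n$, say along a subsequence $n_j\uparrow\infty$. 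Both $\sigma_{n_j}$ and $\sigma_{n_j+1}$ converge to $\sigma_\infty<\infty$ as $j\to\infty$. Because the weak solution to \eqref{eq_stopped_integral} has continuous sample paths by construction (on $[0,\tau_\delta[X]]$ it is given as a Brownian motion plus a Lebesgue integral of the drift, the latter being an almost surely finite function of $t$), both $X^k_i(\sigma_{n_j})$ and $X^k_i(\sigma_{n_j+1})$ converge to $X^k_i(\sigma_\infty)$. This contradicts the uniform $\delta/2$ lower bound on the consecutive differences and forces $P(E)=0$, i.e. $\sigma_\infty=\tau_\delta[X]$ almost surely.

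The substantive work has already been packaged into the second bullet of Lemma \ref{lemma_exist_stop}, which says that between two consecutive stopping times in the sequence $(\sigma_n)$ some coordinate of $X$ must travel a macroscopic distance at least $\delta/2$. Once this is in hand, the present lemma reduces to path continuity plus the finiteness of the index set, so there is no real obstacle; in particular, the Girsanov change of measure used in the proof of uniqueness in Proposition \ref{Proposition_uniqueness_integral} plays no role here, since the statement is purely pathwise.
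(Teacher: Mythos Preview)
Your argument is correct and in fact more direct than the paper's. The paper first reduces to a finite time horizon $T$, then works under the Girsanov-changed measure $\tilde P$: there the increments $|X^k_i(\sigma_{n+1})-X^k_i(\sigma_n)|$ are controlled by increments of finitely many Brownian motions and Bessel processes, whose uniform continuity on $[0,T]$ yields constants $c,p>0$ with $\tilde P(\sigma_{n+1}-\sigma_n>c)>p$; a binomial domination then contradicts $\sigma_n<T$ for all $n$. Your route avoids both the change of measure and the probabilistic comparison: since any weak solution of \eqref{eq_stopped_integral} has continuous paths, on the event $\{\sigma_\infty<\tau_\delta[X]\}$ the single coordinate $X^k_i$ selected by the pigeonhole principle is continuous at $\sigma_\infty$, and $X^k_i(\sigma_{n_j}),X^k_i(\sigma_{n_j+1})\to X^k_i(\sigma_\infty)$ contradicts the $\delta/2$ gap. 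The gain is simplicity and a purely pathwise statement (equivalent measures preserve null sets, so the paper's detour through $\tilde P$ is harmless but unnecessary here); the paper's approach, on the other hand, makes explicit the quantitative modulus-of-continuity control that is already available from the decoupled SDEs \eqref{eq_measure_changed_SDEs}, which is thematically consistent with the surrounding argument but not logically required for this lemma. Your closing remark that the Girsanov change plays no role is accurate.
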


\begin{proof}
It suffices to show that $\lim_{n\to\infty} \sigma_n\wedge T = \tau_\delta[X]\wedge T$ for any given $T>0$. Indeed, then
\[
\lim_{n\to\infty} \sigma_n\geq\lim_{T\to\infty} \lim_{n\to\infty} \sigma_n\wedge T = \lim_{T\to\infty} \tau_\delta[X]\wedge T = \tau_\delta[X]
\]
and $\lim_{n\to\infty} \sigma_n\leq\tau_\delta[X]$ holds by the definitions of the stopping times involved. If for some $n$ we have $\tau_\delta[X]\wedge T=\sigma_n\wedge T$, then we are done. Otherwise, $\sigma_n<T$ for all $n$ and the definition in Lemma \ref{lemma_exist_stop} shows that $|X^k_i(\sigma_{n+1})-X^k_i(\sigma_n)|\ge\delta/2$ for some $(k,i)$. In addition, \eqref{eq_measure_changed_SDEs} yields that, under the measure $\tilde{P}$, $|X^k_i(\sigma_{n+1})-X^k_i(\sigma_n)|$ is bounded above by the sum of absolute values of the increments of at most two Brownian motions and one Bessel process in time $(\sigma_{n+1}-\sigma_n)$. Since the trajectories of such processes are uniformly continuous on the compact interval $[0,T]$ with probability one, there exist two constants $c>0$ and $p>0$ such that
 $\tilde{P}(\sigma_{n+1}-\sigma_n>c)>p$. Consequently, $\sigma_n/c$ stochastically dominates a binomial random variable $Bin(n,p)$. In view of the law of large numbers for the latter, this is a contradiction to $\sigma_n<T$ for all $n$.
\end{proof}

\medskip

Now, we make a Girsanov change of measure back to the original probability measure and conclude
that the joint law of $X(t\wedge\tau_\delta\wedge T)$, $t\geq0$, $\sigma_n\wedge T$ and
$\tau_\delta\wedge T$ under the original probability measure is determined by such law under the
measure $\widetilde P$ (the justification for this conclusion can be found for example in the
proof of \cite[Proposition 5.3.10]{KS}). Since the latter is uniquely defined (by the law of the
solution to \eqref{eq_measure_changed_SDEs}), so is the latter. Finally, since $T>0$ was
arbitrary, we conclude that the joint law of $X(t\wedge\tau_\delta[X])$, $t\geq0$ and
$\tau_\delta[X]$ is uniquely determined.

\bigskip

To construct a weak solution to \eqref{eq_stopped_integral} we start with a probability space
$(\Omega,{\mathcal F},\pp)$ that supports a family of independent standard Brownian motions
$\tilde W^k_i$, $1\leq i\leq k\leq N$. In addition, we note (see \cite[Section XI]{RY} for a
proof) that to each pair of Brownian motions of the form $(\tilde W^k_i,\,\tilde W^{k-1}_{i-1})$
or $(\tilde W^k_i,\,\tilde W^{k-1}_i)$ and all initial conditions we can associate the unique
strong solutions of the SDEs
\begin{eqnarray}
\mathrm{d}R^{k,-}_i(t)=\frac{\theta-1}{R^{k,-}_i(t)}\,\mathrm{d}t+\mathrm{d}\tilde W^k_i(t)-\mathrm{d}\tilde W^{k-1}_{i-1}(t) ,\\
\mathrm{d}R^{k,+}_i(t)=\frac{\theta-1}{R^{k,+}_i(t)}\,\mathrm{d}t+\mathrm{d}\tilde
W^k_i(t)-\mathrm{d}\tilde W^{k-1}_i(t),
\end{eqnarray}
defined on the same probability space.

We will now construct $N(N-1)/2$-dimensional process $X(t)$, $t\ge 0$, stopping times $\tau_\delta$, $\sigma_n$, $n=0,1,2,\dots$ and pair-partitions $A_n$, $n=0,1,2,\dots$ which satisfy the conditions of Lemma \ref{lemma_exist_stop} and the system of equations \eqref{eq_measure_changed_SDEs}.

The construction proceeds for each $\omega\in\Omega$ independently, and is inductive. If the
initial condition $X(0)$ is such that $\tau_\delta[X]=0$, then there is nothing to prove.
Otherwise, we set $\sigma_0=0$ and $A_1=A(0;\delta)$ (see the proof of Lemma
\ref{lemma_exist_stop} for the definition of $A(t;\delta)$). Define $\hat X$ as the unique strong
solution to
\begin{equation}
\label{eq_measure_changed_SDEs2} \hat X^k_i(t)-\hat X^k_i(0)=\begin{cases} \int\limits_{0}^{t}
\Big(\mathrm{d}\tilde{W}^k_i-\frac{(1-\theta)\,\mathrm{d}s}{\hat X^k_i(s)-\hat X^{k-1}_{i-1}(s)}
\Big),\;\;\; \text{ if }\;\;\; ((k,i),(k-1,i-1))\in A_1,\\
\int\limits_{0}^{t}\, \Big(\mathrm{d}\tilde{W}^k_i-\frac{(1-\theta)\mathrm{d}s}{\hat X^k_i(s)-\hat
X^{k-1}_i(s)}\Big),
\;\;\;\text{ if }\;\;\; ((k,i),(k-1,i))\in A_1,\\
\int\limits_{0}^{t} \mathrm{d}\tilde{W}^k_i, \;\;\;\text{ otherwise,}
\end{cases}
\end{equation}
with initial condition $\hat X(0)=X(0)$.

Now we can define $\sigma_1$ as in Lemma \ref{lemma_exist_stop} but with $\hat X(t)$ used instead
of $X(t)$. After this we set $X(t)$ to be equal to $\hat X(t)$ on the time interval
$[0,\sigma_1]$. We further define $A_2=A(\sigma_1;\delta)$ and repeat the above procedure to
define $X(t)$ on the time interval $[\sigma_1,\sigma_2]$. Iterating this process and using the
result of Lemma \ref{lemma_stop_time_converge} we define $X(t)$ up to time $\tau_\delta[X]$. We
extend it to all $t\ge0$ by setting $X^k_i(t)=X^k_i(\tau_\delta[X])$ for $t>\tau_\delta[X]$.

\medskip

Next, we apply the Girsanov Theorem as in the uniqueness part to conclude that, for each $T>0$,
there exists a probability measure $\qq_T$ which is absolutely continuous with respect to $\pp$ and
such that the representation
\begin{equation*}
\begin{split}
X^k_i(t\wedge T)-X^k_i(0) =\int_0^{t\wedge T\wedge\tau_\delta[X]} \Biggl(\sum_{m\neq i}
\frac{1-\theta}{X^k_i(s)-X^k_m(s)}
-\sum_{m=1}^{k-1} \frac{1-\theta}{X^k_i(s)-X^{k-1}_m(s)}\Biggr)\,\mathrm{d}s \\
+ W^k_i(t\wedge T\wedge\tau_\delta[X]), \quad 1\leq i\leq k\leq N
\end{split}
\end{equation*}
holds with $W^k_i$, $1\leq i\leq k\leq N$ being independent standard Brownian motions under
$\qq_T$.

\medskip

Finally, replacing $T$ by a sequence $T_n\uparrow\infty$ and using the Kolmogorov Extension
Theorem (see e.g.\ \cite[Theorem 6.16]{Kal}) and note that the consistency condition is satisfied
due to the uniqueness of the solution to \eqref{eq_stopped_integral}), we deduce the existence of
processes $X^k_i$, $1\leq i\leq k\leq N$ on some probability space solving
\eqref{eq_stopped_integral}.

\subsection{Proof of Proposition \ref{proposition_multlilevel_stopping}}
\label{section_stopping_time_infinity}

We start with a version of Feller's test for explosions that will be used below (see e.g. \cite[Section 5.5.C]{KS} and the references therein for related results).

\begin{lemma}\label{hit0lemma}
Let $Z$ be a one-dimensional continuous semimartingale satisfying $Z(0)>0$ and \eq \forall\,0\leq
t_1<t_2:\quad Z(t_2)-Z(t_1)=b(t_2-t_1)+M(t_2)-M(t_1) \en with a constant $b>0$ and a local
martingale $M$. If the quadratic variation of $M$ satisfies \eq\label{quadvarcontr} \forall\,0\leq
t_1<t_2:\quad \left\langle M\right\rangle(t_2)-\left\langle M\right\rangle(t_1) \leq
2b\,\int_{t_1}^{t_2} Z(t)\,\mathrm{d}t, \en then the process $Z$ does not reach zero in finite
time with probability one.
\end{lemma}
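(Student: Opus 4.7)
The plan is to apply It\^o's formula to $\log Z$, use the quadratic variation bound to turn $\log Z$ into a local submartingale whose Lebesgue drift is non-negative, and then run a standard optional stopping argument to show that the probability of $Z$ reaching the level $1/n$ before a fixed time $T$ tends to zero as $n\to\infty$. The central computation is that on any interval where $Z>0$ It\^o gives
\[
d(\log Z(t)) = \frac{b}{Z(t)}\,dt + \frac{dM(t)}{Z(t)} - \frac{1}{2 Z(t)^2}\,d\langle M\rangle(t),
\]
and the hypothesis $d\langle M\rangle \leq 2bZ\,dt$ makes the last term dominate the first with the right sign, so
\[
\log Z(t) \geq \log Z(0) + \int_0^t Z(s)^{-1}\,dM(s),
\]
valid up to the first hitting time of $0$; in other words $\log Z$ lies above a continuous local martingale $N(t) := \int_0^t Z(s)^{-1}\,dM(s)$.

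To turn this into a genuine expectation inequality I would localize simultaneously from above and from below. Introduce $\tau_n := \inf\{t\geq 0 : Z(t)\leq 1/n\}$ and $\sigma_m := \inf\{t\geq 0 : Z(t)\geq m\}$, fix $T>0$, and stop at $\rho_{n,m} := \tau_n \wedge \sigma_m \wedge T$. On $[0,\rho_{n,m}]$ the process $Z$ lies in $[1/n,m]$, which combined with $d\langle M\rangle \leq 2bZ\,dt$ gives $\langle M\rangle(\rho_{n,m})\leq 2bmT$; this makes $M^{\rho_{n,m}}$ a genuine $L^2$-martingale, and since the integrand of $N$ is bounded by $n$ on this interval, $N^{\rho_{n,m}}$ is one as well. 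Optional stopping in $Z = Z(0) + b\,t + M$ then yields $\ev[Z(\rho_{n,m})] \leq Z(0) + bT$, while taking expectations in the inequality above yields $\log Z(0) \leq \ev[\log Z(\rho_{n,m})]$.

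The final step is to split $\ev[\log Z(\rho_{n,m})]$ according to whether $\tau_n \leq \sigma_m\wedge T$ or not: on that event $\log Z(\rho_{n,m})=-\log n$, and on the complement the elementary bound $\log x\leq x$ (valid for all $x>0$) gives $\ev[\log Z(\rho_{n,m})\mathbf{1}_{\{\tau_n>\rho_{n,m}\}}]\leq \ev[Z(\rho_{n,m})]\leq Z(0)+bT$. Putting these together,
\[
\log Z(0) \leq -\log n\cdot \pp(\tau_n\leq \sigma_m\wedge T) + Z(0) + bT,
\]
and rearranging yields $\pp(\tau_n\leq\sigma_m\wedge T) \leq (Z(0) + bT - \log Z(0))/\log n$. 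Since the continuity of the paths of $Z$ forces $\sigma_m\to\infty$ almost surely as $m\to\infty$, I would let first $m\to\infty$ and then $n\to\infty$ to conclude $\pp(\tau_n\leq T)\to 0$, so $Z$ cannot reach $0$ before $T$; as $T$ is arbitrary the lemma follows. The main obstacle is purely technical and concerns this simultaneous localization: the hypothesis only guarantees $M$ to be a local martingale, so one has to cut $Z$ off both from below (to control $Z^{-1}$ and $\log Z$) and from above (to convert $\langle M\rangle\leq 2bZ\,dt$ into a usable $L^2$ bound on $M$). The stopping time $\rho_{n,m}$ handles both cleanly, and once this is set up the rest is a one-line It\^o computation combined with $\log x\leq x$.
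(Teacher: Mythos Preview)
Your proof is correct and follows essentially the same route as the paper: apply It\^o's formula to $\log Z$, use the hypothesis $d\langle M\rangle\le 2bZ\,dt$ to make the Lebesgue drift non-negative, localize with two-sided barrier hitting times, and run an optional stopping argument to bound the probability of reaching the lower barrier. The only cosmetic difference is in bounding $\ev[\log Z]$ on the event that the lower barrier is not hit: the paper simply uses $\log Z\le \log L_1$ on the stopped process and passes $t\to\infty$ via Fatou, whereas you introduce a time cutoff $T$ and combine $\log x\le x$ with the bound $\ev[Z(\rho_{n,m})]\le Z(0)+bT$ coming from optional stopping applied to $M$ itself.
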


\begin{proof} We fix two constants $0<l_1<Z(0)<L_1<\infty$ and let $\tau_{l_1,L_1}$ be the first time that $Z$ reaches $l_1$ or $L_1$. Next,
we apply It\^o's formula (see e.g.\ \cite[Section 3.3.A]{KS}) to obtain \eq\label{lnIto} \ln
Z(t\wedge\tau_{l_1,L_1}\wedge\zeta)-\ln Z(0)=\int_0^{t\wedge\tau_{l_1,L_1}\wedge\zeta}
\Big(\frac{b\,\mathrm{d}s}{Z(s)} - \frac{\mathrm{d}\langle
M\rangle(s)}{2\,Z(s)^2}\Big)+\int_0^{t\wedge\tau_{l_1,L_1}\wedge\zeta} \frac{\mathrm{d}M(s)}{Z(s)}
\en for any stopping time $\zeta$. By \eqref{quadvarcontr}, the first integral  in \eqref{lnIto}
takes non-negative values. Hence, picking a localizing sequence of stopping times $\zeta=\zeta_l$
for the local martingale given by the second integral in \eqref{lnIto}, taking the expectation in
\eqref{lnIto} and passing to the limit $l\to\infty$, we obtain
\[
\ev[\ln Z(t\wedge\tau_{l_1,L_1})]\geq \ln Z(0).
\]
Now, Fatou's Lemma and $Z(t\wedge\tau_{l_1,L_1})\leq L_1$ yield the chain of estimates
\[
\ln Z(0)\leq\ev[\limsup_{t\to\infty} \ln Z(t\wedge\tau_{l_1,L_1})]\leq p_{l_1}\ln l_1+(1-p_{l_1})\ln L_1
\]
where $p_{l_1}=P(\limsup_{t\to\infty} \ln Z(t\wedge\tau_{l_1,L_1})=l_1)$. Consequently,
\[
p_{l_1}\leq\frac{\ln L_1-\ln Z(0)}{\ln L_1 - \ln l_1}.
\]
The lemma now follows by taking the limit $l_1\downarrow0$.
\end{proof}

\medskip

We will now show that $\tau_0[X]=\infty$ for the solution of \eqref{eq_stopped_integral} with
initial condition $X(0)$ such that $\tau_0[X]>0$ (in particular, this includes the case that
$X(0)$ belongs to the interior of $\overline{\mathcal{G}^N}$). Recall that $\tau_0$ was defined as
the first time when one of the events in Figure \ref{Figure_6events} with $\delta=0$ occurs. We
will show that neither of the cases $A-F$ in Figure \ref{Figure_6events} can occur in finite time.
We will argue by the induction to show that none of these events happen on the first $k$ levels
for $k=1,2,\dots,N$.

First, we concentrate on the cases $A$ and $B$.

\begin{lemma} \label{lemma_AB_follows_CDEF} An event of the form $X^k_i(t)=X^k_{i+1}(t)$ cannot occur in finite time without one
of the events \eq\label{events} X^{k-1}_i(t)-X^{k-2}_{i-1}(t)=0,\quad X^{k-2}_i(t)-X^{k-1}_i(t)=0
\en occurring at the same time.
\end{lemma}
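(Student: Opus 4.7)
The plan is to combine continuity of the sample paths of $X$ with the (weak) interlacing that characterizes $\overline{\mathcal{G}^N}$; no probabilistic input beyond path-continuity is needed.

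First, I would record that the solution $X$ of \eqref{eq_stopped_integral_0} has continuous paths, since each coordinate is the sum of an absolutely continuous drift integral and a (stopped) Brownian motion. Moreover, on the interval $[0,\tau_0[X])$ the trajectory stays in the open cone $\mathcal{G}^N$, so the strict interlacing
\[
 X^{k-1}_{i-1}(s) < X^k_i(s) < X^{k-1}_i(s), \qquad 1\le i\le k\le N,
\]
holds for every $s < \tau_0[X]$. In particular $X^k_i(s) \le X^{k-1}_i(s) \le X^k_{i+1}(s)$ at all such times.

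Next, I would suppose that the event $X^k_i(t)=X^k_{i+1}(t)$ takes place at some finite time $t$. Passing to the limit $s\uparrow t$ in the chain $X^k_i(s)\le X^{k-1}_i(s)\le X^k_{i+1}(s)$ and using continuity of the paths, one obtains the weak inequality $X^k_i(t)\le X^{k-1}_i(t)\le X^k_{i+1}(t)$ at time $t$ itself. Because the two outer terms coincide, the middle term $X^{k-1}_i(t)$ is pinched between them and must also equal their common value. This forces the simultaneous adjacent-level coincidence $X^k_i(t)=X^{k-1}_i(t)$ together with $X^{k-1}_i(t)=X^k_{i+1}(t)$, which is exactly (up to relabeling of indices to match the figure) one of the events listed in the lemma.

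The main piece of care is really bookkeeping: identifying which of the adjacent-level triple collisions from Figure~\ref{Figure_6events} corresponds to the pinching just produced, and verifying that the same-level collision at level $k$ cannot happen in isolation from it. There is no substantive obstacle, because the conclusion is a direct consequence of the fact that $\overline{\mathcal{G}^N}$ is closed and the sample paths of $X$ are continuous.
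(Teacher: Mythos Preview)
Your argument does not prove the lemma as stated. You have shown, by the elementary squeezing
$X^k_i(t)\le X^{k-1}_i(t)\le X^k_{i+1}(t)$, that a same-level collision on level $k$ forces the
triple coincidence $X^k_i(t)=X^{k-1}_i(t)=X^k_{i+1}(t)$ involving levels $k$ and $k-1$. However,
the events \eqref{events} in the lemma are
\[
X^{k-1}_i(t)=X^{k-2}_{i-1}(t)\qquad\text{and}\qquad X^{k-2}_i(t)=X^{k-1}_i(t),
\]
which live on levels $k-1$ and $k-2$. No relabeling turns your conclusion into either of these:
it is perfectly compatible with interlacing that $X^k_i(t)=X^{k-1}_i(t)=X^k_{i+1}(t)$ holds while
simultaneously $X^{k-2}_{i-1}(t)<X^{k-1}_i(t)<X^{k-2}_i(t)$ strictly. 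So the ``bookkeeping'' step
you flag as routine is in fact the entire content of the lemma, and it is not a deterministic
consequence of path-continuity and interlacing.

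The paper's proof is a genuine stochastic-analysis argument. One assumes for contradiction that
the collision $X^k_i=X^k_{i+1}$ occurs at a time $\sigma'$ while the two spacings in
\eqref{events} stay bounded below by some $\kappa>0$ on an interval $[\sigma,\sigma']$. The
induction hypothesis (no same-level collisions on level $k-1$) then also keeps
$X^k_i-X^{k-1}_{i-1}$ and $X^{k-1}_{i+1}-X^k_{i+1}$ bounded below. The reason the hypotheses on
levels $k-1,k-2$ matter is that they make \emph{all} drift terms in the equation for $X^{k-1}_i$
bounded on $[\sigma,\sigma']$, so a Girsanov change of measure can legitimately decouple the
three particles $X^k_i,X^k_{i+1},X^{k-1}_i$ from the rest of the system. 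One then shows that the
Lyapunov function
\[
Z=\tfrac12\bigl((X^{k-1}_i-X^k_i)^2+(X^k_{i+1}-X^{k-1}_i)^2\bigr)
\]
satisfies $\mathrm{d}Z=(1+\theta)\,\mathrm{d}t+\mathrm{d}M$ with $\mathrm{d}\langle M\rangle\le
4Z\,\mathrm{d}t$, and an application of the Feller-type non-attainability criterion (Lemma~\ref{hit0lemma})
gives that $Z$ cannot reach zero, contradicting the assumed collision. None of this structure is
captured by your interlacing observation.
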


\begin{proof}
If the statement of the lemma was not true, then the continuity of the paths of the particles would allow us to find stopping times $\sigma$, $\sigma'$ similar to the ones introduced in Section \ref{section_proof_of_uniqueness_integral} and a real number $\kappa>0$ such that $\sigma<\sigma'$ with probability one, the spacings in \eqref{events} are at least $\kappa$ during the time interval $[\sigma,\sigma']$ and the event $X^k_i(t)=X^k_{i+1}(t)$ occurs for the first time at time $\sigma'$. Moreover, the interlacing condition and the induction hypothesis imply together that $[\sigma,\sigma']$ and $\kappa$ can be chosen such that the spacings
\[
X^k_i-X^{k-1}_{i-1},\quad X^{k-1}_{i+1}-X^k_{i+1}
\]
do not fall below $\kappa$ on $[\sigma,\sigma']$ (otherwise at least one of the events
$X^{k-1}_{i-1}(t)=X^{k-1}_i(t)$ or $X^{k-1}_i(t)=X^{k-1}_{i+1}(t)$ would have occurred at time
$\sigma'$ in contradiction to the induction hypothesis). The described inequalities are shown in
Figure \ref{Figure_decoup}.

\begin{figure}[h]
\includegraphics[width=8cm]{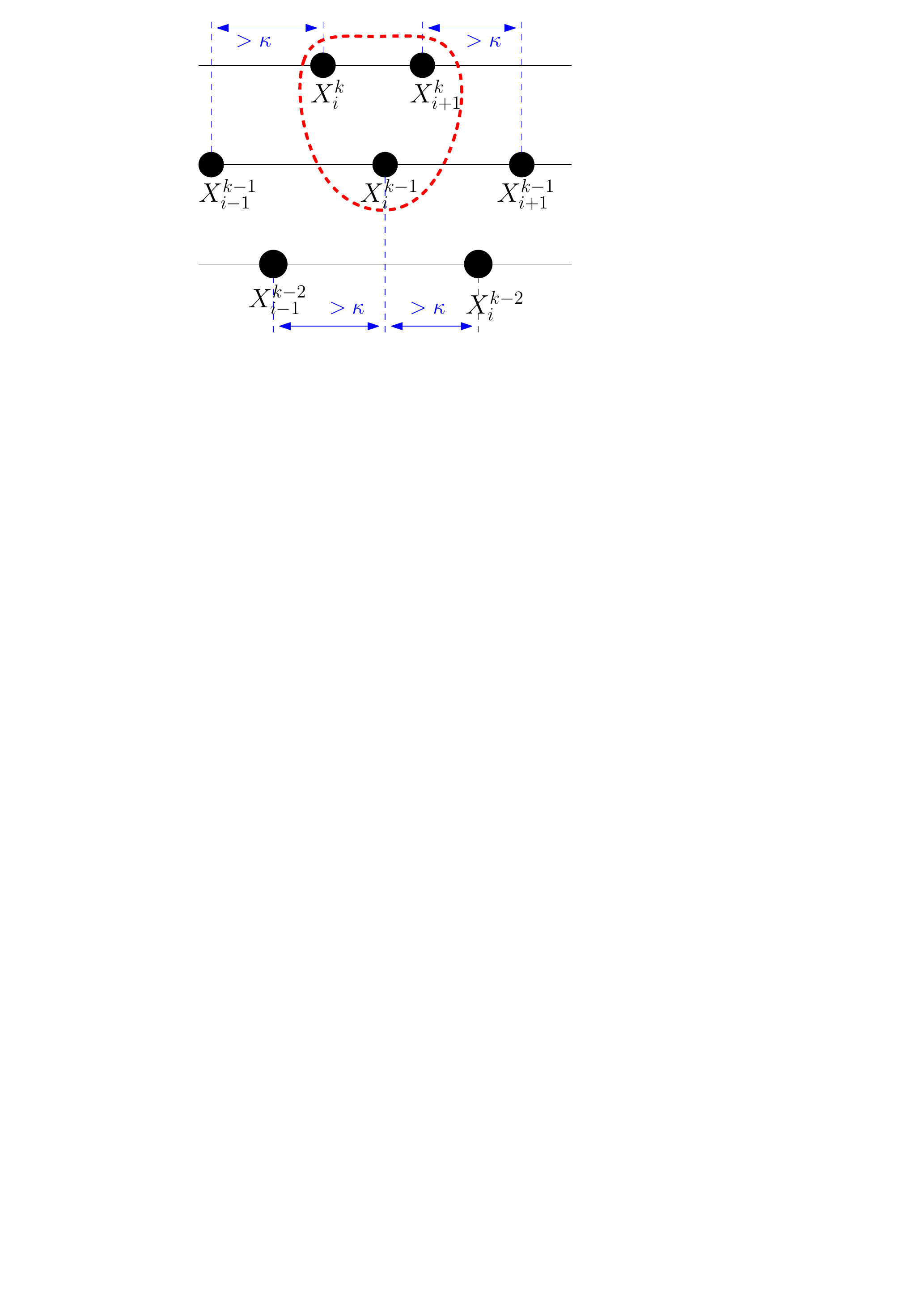}
\caption{Decoupling of the particles $X^k_i$, $X^k_{i+1}$, $X^{k-1}_i$.} \label{Figure_decoup}
\end{figure}

Now, making a Girsanov change of measure similar to the one in Section
\ref{section_proof_of_uniqueness_integral}, we can decouple the particles $X^k_i$, $X^k_{i+1}$,
$X^{k-1}_i$ from the rest of the particle system, thus reducing their dynamics on the time
interval $[\sigma,\sigma']$ to the two-level dynamics:
\begin{eqnarray*}
&& X^k_i(t\wedge\sigma')-X^k_i(t\wedge\sigma) = \int_{t\wedge\sigma}^{t\wedge\sigma'}
\biggl(\mathrm{d}\tilde{W}^k_i+\frac{(1-\theta)\,\mathrm{d}s}{X^k_i(s)-X^k_{i+1}(s)}-\frac{(1-\theta)\,\mathrm{d}s}{X^k_i(s)-X^{k-1}_i(s)}\biggr), \\
&& X^k_{i+1}(t\wedge\sigma')-X^k_{i+1}(t\wedge\sigma) = \int_{t\wedge\sigma}^{t\wedge\sigma'}
\biggl(\mathrm{d}\tilde{W}^k_{i+1}+\frac{(1-\theta)\,\mathrm{d}s}{X^k_{i+1}(s)-X^k_i(s)}
-\frac{(1-\theta)\,\mathrm{d}s}{X^k_{i+1}(s)-X^{k-1}_i(s)}\biggr),\\
&& X^{k-1}_i(t\wedge\sigma')-X^{k-1}_i(t\wedge\sigma) = \int_{t\wedge\sigma}^{t\wedge\sigma'}
\mathrm{d}\tilde{W}^{k-1}_i
\end{eqnarray*}
with $\tilde{W}^k_i$, $\tilde{W}^k_{i+1}$, $\tilde{W}^{k-1}_i$ being standard Brownian motions
under the new probability measure. Next, we note that the process $X^k_{i+1}-X^k_i$ hits zero if
and only if the process \eq Z=\frac{1}{2}\Big((X_i^{k-1}-X^k_i)^2+(X_{i+1}^k-X_i^{k-1})^2\Big) \en
hits zero and in this case both events occur at the same time. Moreover, applying It\^o's formula
(see e.g.\ \cite[Section 3.3.A]{KS}) and simplifying the result, we obtain
\begin{eqnarray*}
Z(t\wedge\sigma')-Z(t\wedge\sigma) = \int_{t\wedge\sigma}^{t\wedge\sigma'} \Big((1+\theta)\,\mathrm{d}s
+(X_i^{k-1}-X^k_i)\,\mathrm{d}(\tilde{W}^{k-1}_i-\tilde{W}^k_i)
\qquad\qquad\qquad\qquad\qquad\qquad\\
+(X_{i+1}^k-X_i^{k-1})\,\mathrm{d}(\tilde{W}^k_{i+1}-\tilde{W}^{k-1}_i)\Big)
=:\int_{t\wedge\sigma}^{t\wedge\sigma'} \big((1+\theta)\,\mathrm{d}s+\mathrm{d}M\big)\quad\quad\quad\;\;
\end{eqnarray*}
where $M$ is a local martingale whose quadratic variation process satisfies
\[
\begin{split}
\langle M\rangle(t\wedge\sigma')-\langle M\rangle(t\wedge\sigma)
=\int_{t\wedge\sigma}^{t\wedge\sigma'} \Big(2\,(X_i^{k-1}-X^k_i)^2+2\,(X_{i+1}^k-X_i^{k-1})^2
\qquad\qquad\\
-(X_i^{k-1}-X^k_i)(X_{i+1}^k-X_i^{k-1})\Big)\,\mathrm{d}s,\quad t\ge0.
\end{split}
\]
We can now define the (random) time change
\[
s(t)=\inf\Big\{s\ge0:\;\;\int_{s\wedge\sigma}^{s\wedge\sigma'} (1+\theta)\,\mathrm{d}u=t\Big\},\quad 0\le t\le\int_{\sigma}^{\sigma'} (1+\theta)\,\mathrm{d}u
\]
and rewrite the stochastic integral equation for $Z$ as
\[
Z(s(t_2))-Z(s(t_1))=(t_2-t_1)+M(s(t_2))-M(s(t_1)),\quad 0\le t_1\le t_2\le\int_{\sigma}^{\sigma'} (1+\theta)\,\mathrm{d}u.
\]
A standard application of the Optional Sampling Theorem (see e.g. the proof of \cite[Theorem 4.6,
Chapter 3]{KS} for a similar argument) shows that the process $M(s(t))$, $t\ge0$ is a local
martingale in its natural filtration. In addition,
\begin{eqnarray*}
\langle M\rangle(s(t_2))-\langle M\rangle(s(t_1))
&\le&\int_{t_1}^{t_2} \frac{2\,(X_i^{k-1}(s(t))-X^k_i(s(t)))^2+2\,(X_{i+1}^k(s(t))-X_i^{k-1}(s(t)))^2}{1+\theta}\,\mathrm{d}t \\
&\le&2\,\int_{t_1}^{t_2} Z(s(t))\,\mathrm{d}t,
\quad\quad\quad 0\le t_1\le t_2\le\int_{\sigma}^{\sigma'} (1+\theta)\,\mathrm{d}u.
\end{eqnarray*}
It follows that the process
\[
\tilde{Z}(t)=
\begin{cases}
Z(s(t)) & \text{if}\;\;\;t\in\big[0,\int_{\sigma}^{\sigma'} (1+\theta)\,\mathrm{d}u\big] \\
Z\big(s\big(\int_{\sigma}^{\sigma'} (1+\theta)\,\mathrm{d}u\big)\big)+\big(t-\int_{\sigma}^{\sigma'} (1+\theta)\,\mathrm{d}u\big) & \text{if}\;\;\;t\in\big(\int_{\sigma}^{\sigma'} (1+\theta)\,\mathrm{d}u,\infty\big)
\end{cases}
\]
falls into the framework of Lemma \ref{hit0lemma}. Consequently, the original process $Z(t)$, $t\ge0$ does not reach zero on the time interval $[\sigma,\sigma']$ with probability one. Using Girsanov's Theorem again (now to go back to the original probability measure) we conclude that $X^k_{i+1}-X^k_i$ does not hit zero on $[\sigma,\sigma']$ under the original probability measure, which is the desired contradiction.
\end{proof}

Next, we study the events $C-F$ in Figure \ref{Figure_6events}. All of them can be dealt in exactly
the same manner  (in particular, using a Lyapunov function of the same form) and we  will only show
the following:

\begin{lemma}
\label{lemma_CDEF} The event
 \eq\label{event2} X^k_i(t)=X^{k-1}_i(t)=X^{k-2}_i(t) \en cannot occur in finite time.
\end{lemma}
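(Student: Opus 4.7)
The plan is to mimic the proof of Lemma \ref{lemma_AB_follows_CDEF}. Arguing by contradiction, suppose the event $X^k_i(t)=X^{k-1}_i(t)=X^{k-2}_i(t)$ occurs in finite time with positive probability. Using the outer induction on the level index $k$ (which supplies the conclusions of Lemmas \ref{lemma_AB_follows_CDEF} and \ref{lemma_CDEF} on all lower levels) together with the continuity of paths, I would find stopping times $\sigma<\sigma'$ and a constant $\kappa>0$ such that on $[\sigma,\sigma']$ every particle other than $A:=X^k_i$, $B:=X^{k-1}_i$, $C:=X^{k-2}_i$ stays at distance at least $\kappa$ from the triple, while the bad event first occurs at $\sigma'$. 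A Girsanov change of measure then removes all bounded drifts and reduces the joint evolution on $[\sigma,\sigma']$ to the autonomous three-particle cascade
\begin{align*}
 dA &= -\frac{\theta-1}{B-A}\,dt+d\tilde W^k_i,\\
 dB &= -\frac{\theta-1}{C-B}\,dt+d\tilde W^{k-1}_i,\\
 dC &= d\tilde W^{k-2}_i,
\end{align*}
with $\tilde W^k_i,\tilde W^{k-1}_i,\tilde W^{k-2}_i$ independent standard Brownian motions under the new measure.

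Writing $U:=B-A$ and $V:=C-B$, the forbidden event becomes $U=V=0$, and the dynamics exhibit a one-way cascade: $V$ alone solves a Bessel-type SDE of dimension $\theta$, $U$ is driven by $V$, and $C$ is purely diffusive. Following the pattern of Lemma \ref{lemma_AB_follows_CDEF}, I would try the Lyapunov function of the same form
\[
 Z=\frac{1}{2}\bigl((B-A)^2+(C-B)^2\bigr)=\frac{1}{2}\bigl(U^2+V^2\bigr),
\]
compute $dZ=b(t)\,dt+dM$ via It\^o's formula, and then apply Lemma \ref{hit0lemma} (after a random time change converting the state-dependent drift into a constant one) to conclude that $Z$ cannot reach zero in finite time on $[\sigma,\sigma']$, contradicting $Z(\sigma')=0$. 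A reversal of the Girsanov transformation transfers the conclusion to the original probability measure and completes the proof.

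The main obstacle is verifying the Feller-type bound $d\langle M\rangle\leq 2b(t)\,Z\,dt$ uniformly for all $\theta>1$. The cascade asymmetry here (in contrast to the symmetric situation of Lemma \ref{lemma_AB_follows_CDEF}) yields $b(t)=2\theta-(\theta-1)U/V$, which is negative when $U\gg V$. For $\theta\geq 2$ the simpler candidate $Z=\tfrac12(C-A)^2$ works directly: its drift equals $(\theta-1)(C-A)/(B-A)+1\geq\theta\geq 2$ while the quadratic-variation rate is $2(C-A)^2=4Z$, so the required inequality holds. For $1<\theta<2$ a finer analysis is needed: in the regime where $U/V$ is bounded the natural choice $(U^2+V^2)/2$ already has uniformly positive drift, while in the complementary regime $Z\geq U^2/2$ is itself bounded away from zero, so that one can use the Bessel-type repulsion on $V$ (valid for any $\theta>1$) to exclude the simultaneous vanishing of $U$ and $V$. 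Either a careful case-split of this kind, or equivalently a weighted combination of $(C-A)^2$, $(C-B)^2$, and $(B-A)(C-B)$ tuned so that both the positive drift and the quadratic-variation bound are preserved throughout the relevant portion of state space, completes the argument in full generality.
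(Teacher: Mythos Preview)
Your overall strategy (induction on $k$, Girsanov decoupling, Lyapunov function plus Lemma \ref{hit0lemma}) matches the paper's, but there is a genuine gap in your setup. You claim that on $[\sigma,\sigma']$ every particle other than $A=X^k_i$, $B=X^{k-1}_i$, $C=X^{k-2}_i$ can be kept at distance $\geq\kappa$ from the triple. This is not justified: interlacing forces $X^k_i\le X^{k-1}_i\le X^k_{i+1}$, and nothing in the induction hypothesis or in Lemma \ref{lemma_AB_follows_CDEF} prevents $X^k_{i+1}$ from coinciding with $X^{k-1}_i$ at the same moment that $X^k_i=X^{k-1}_i=X^{k-2}_i$ (indeed Lemma \ref{lemma_AB_follows_CDEF} is consistent with $X^k_i=X^k_{i+1}$ precisely when $X^{k-1}_i=X^{k-2}_i$). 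The paper therefore splits into two subcases: a \emph{four}-particle collision $X^k_i=X^k_{i+1}=X^{k-1}_i=X^{k-2}_i$, and your three-particle one. The paper's detailed computation is for the four-particle case, with Lyapunov function $Z=\tfrac12(R^2+S^2+U^2+2RU)$ where $R=X^{k-1}_i-X^k_i$, $S=X^{k-2}_i-X^{k-1}_i$, $U=X^k_{i+1}-X^{k-1}_i$. The drift of this $Z$ picks up an extra term $(\theta-1)(U/R+R/U)\ge 2(\theta-1)$ coming from the same-level repulsion between $X^k_i$ and $X^k_{i+1}$, and it is exactly this contribution that pushes the total drift above $2$ for all $\theta>1$ and makes Lemma \ref{hit0lemma} applicable after the time change.

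For the pure three-particle subcase the paper's suggested Lyapunov function is $Z=\tfrac12(R^2+S^2+2RS)=\tfrac12(C-A)^2$, i.e.\ your second candidate; your observation that the drift is only $\theta+(\theta-1)S/R\ge\theta$ is correct, so the direct application of Lemma \ref{hit0lemma} cleanly covers only $\theta\ge 2$. Your proposed remedy for $1<\theta<2$ is not a proof as written: the ``Bessel-type repulsion on $V$'' does not prevent $V$ from hitting zero when $\theta<2$, and the ``case-split or weighted combination'' is a program rather than an argument. In short, the missing four-particle subcase is the essential gap relative to the paper, and your three-particle analysis for $1<\theta<2$ remains incomplete.
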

\begin{proof}
To show the non-occurrence of the event in \eqref{event2}, we again argue by induction over $k$
and by contradiction. Assuming that the event in \eqref{event2} occurs in finite time, we may
invoke the induction hypothesis and Lemma \ref{lemma_AB_follows_CDEF} to find a random time
interval $[\sigma,\sigma']$ with $\sigma$, $\sigma'$ being stopping times and a real number
$\kappa>0$ such that the event in \eqref{event2} occurs for the first time at $\sigma'$, and
either $X^k_i(\sigma')=X^k_{i+1}(\sigma')=X^{k-1}_i(\sigma')=X^{k-2}_i(\sigma')$ and the spacings
$X^k_i-X^{k-1}_{i-1}$, $X^{k-1}_i-X^{k-2}_{i-1}$, $X^{k-2}_i-X^{k-3}_{i-1}$,
$X^{k-3}_i-X^{k-2}_i$, $X^{k-1}_{i+1}-X^{k-2}_i$, $X^{k-1}_{i+1}-X^k_{i+1}$ are bounded below by
$\kappa$ on $[\sigma,\sigma']$; or $X^k_i(\sigma')=X^{k-1}_i(\sigma')=X^{k-2}_i(\sigma')$ and the
spacings $X^k_i-X^{k-1}_{i-1}$, $X^{k-1}_i-X^{k-2}_{i-1}$, $X^{k-2}_i-X^{k-3}_{i-1}$,
$X^{k-3}_i-X^{k-2}_i$, $X^{k-1}_{i+1}-X^{k-2}_i$, $X^k_{i+1}-X^{k-1}_i$ are bounded below by
$\kappa$ on $[\sigma,\sigma']$. In the first case, we can make a Girsanov change of measure such
that under the new measure the evolution of the particles $X^k_i$, $X^k_{i+1}$, $X^{k-1}_i$,
$X^{k-2}_i$ decouples from the rest of the particle system on the time interval
$[\sigma,\sigma']$. Similarly, in the second case, we can apply a Girsanov change of measure such
that under the new measure the dynamics of the particles $X^k_i$, $X^{k-1}_i$, $X^{k-2}_i$
decouples from the dynamics of the rest of the particle configuration. Figure
\ref{Figure_decoupling4} shows schematic illustrations of these two cases.

\begin{figure}[h]
\includegraphics[width=7.5cm]{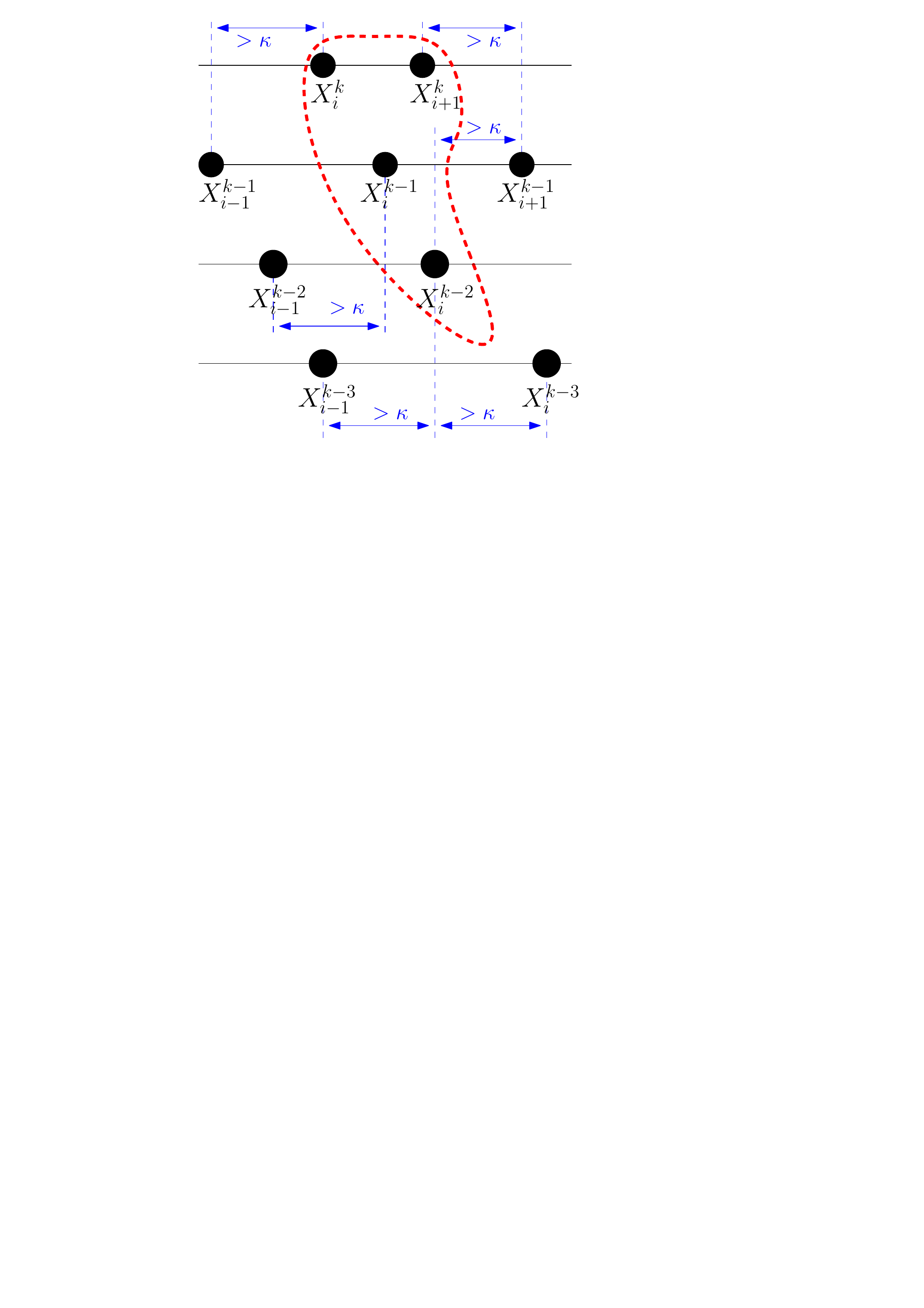}
\hfill
\includegraphics[width=7.5cm]{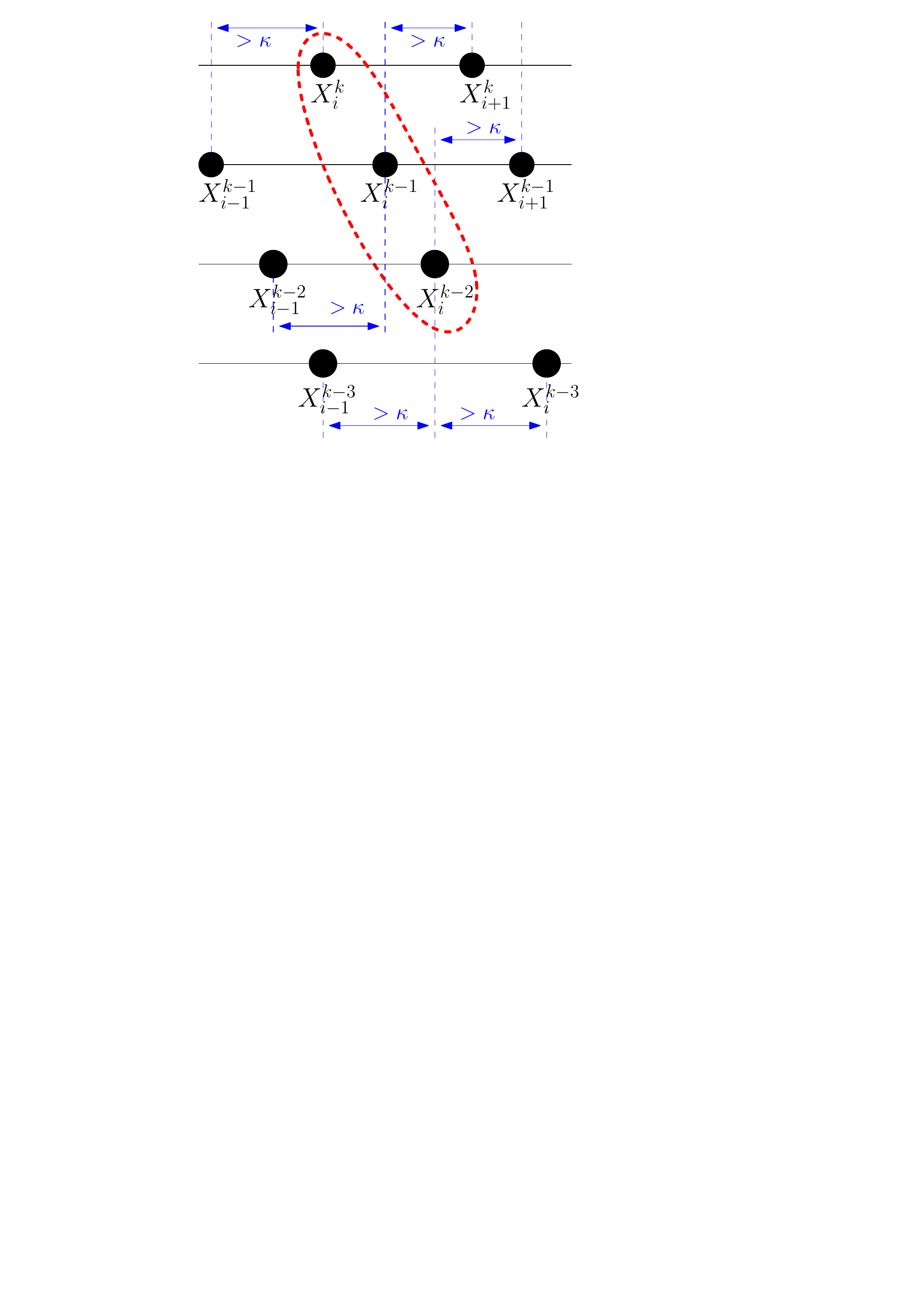}
\caption{Decoupling of the particles $X^k_i$, $X^k_{i+1}$, $X^{k-1}_i$, $X^{k-2}_i$ (left panel)
and the particles $X^k_i$, $X^{k-1}_i$, $X^{k-2}_i$ (right panel).} \label{Figure_decoupling4}
\end{figure}

We only treat the first of the two cases in detail (the second case can be dealt with by
proceeding as below with $Z:=\frac{1}{2}(R^2+S^2+2\,R\,S)$ where $R$ and $S$ are defined below).
In the first case, the decoupled particles satisfy under the new measure:
\begin{eqnarray*}
&& X^k_i(t\wedge\sigma')-X^k_i(t\wedge\sigma) =\int_{t\wedge\sigma}^{t\wedge\sigma'}
\Big(\mathrm{d}\tilde{W}^k_i+\frac{(1-\theta)\,\mathrm{d}s}{X^k_i(s)-X^k_{i+1}(s)}-\frac{(1-\theta)\,\mathrm{d}s}{X^k_i(s)-X^{k-1}_i(s)}\Big), \\
&& X^k_{i+1}(t\wedge\sigma')-X^k_{i+1}(t\wedge\sigma) =\int_{t\wedge\sigma}^{t\wedge\sigma'}
\Big(\mathrm{d}\tilde{W}^k_{i+1}+\frac{(1-\theta)\,\mathrm{d}s}{X^k_{i+1}(s)-X^k_i(s)}
-\frac{(1-\theta)\,\mathrm{d}s}{X^k_{i+1}(s)-X^{k-1}_i(s)}\Big), \\
&& X^{k-1}_i(t\wedge\sigma')-X^{k-1}_i(t\wedge\sigma) =\int_{t\wedge\sigma}^{t\wedge\sigma'}
\Big(\mathrm{d}\tilde{W}^{k-1}_i-\frac{(1-\theta)\,\mathrm{d}s}{X^{k-1}_i(s)-X^{k-2}_i(s)}\Big),\\
&&
X^{k-2}_i(t\wedge\sigma')-X^{k-2}_i(t\wedge\sigma)=\int_{t\wedge\sigma}^{t\wedge\sigma'}\,\mathrm{d}\tilde{W}^{k-2}_i
\end{eqnarray*}
where $\tilde{W}^k_i,\,\tilde{W}^k_{i+1},\,\tilde{W}^{k-1}_i,\,\tilde{W}^{k-2}_i$ are independent
standard Brownian motions under the new measure. Next, we set $R:=X^{k-1}_i-X^k_i$,
$S:=X^{k-2}_i-X^{k-1}_i$, $U:=X^k_{i+1}-X^{k-1}_i$, $B_1:=\tilde{W}^{k-1}_i-\tilde{W}^k_i$,
$B_2:=\tilde{W}^{k-2}_i-\tilde{W}^{k-1}_i$, $B_3:=\tilde{W}^k_{i+1}-\tilde{W}^{k-1}_i$ and define
$Z:=\frac{1}{2}(R^2+S^2+U^2+2\,R\,U)$. Applying It\^o's formula (see e.g.\ \cite[Section
3.3.A]{KS}) and simplifying, we obtain
\begin{eqnarray*}
Z(t\wedge\sigma')-Z(t\wedge\sigma)
=\int_{t\wedge\sigma}^{t\wedge\sigma'} \left(4+\theta+(\theta-1)\left(\frac{U(s)}{R(s)}+\frac{R(s)}{U(s)}\right)\mathrm{d}s\right)+(R+U)\mathrm{d}(B_1+B_3)+S\mathrm{d}B_2 \\
=:\int_{t\wedge\sigma}^{t\wedge\sigma'} \Big(D(s)\,\mathrm{d}s+\mathrm{d}M\Big)
\end{eqnarray*}
where $M$ is a local martingale whose quadratic variation process satisfies
\begin{eqnarray*}
\langle M\rangle(t\wedge\sigma')-\langle M\rangle(t\wedge\sigma)
=\int_{t\wedge\sigma}^{t\wedge\sigma'} 2\,(R^2+S^2+U^2+2\,R\,U)\,\mathrm{d}s
=\int_{t\wedge\sigma}^{t\wedge\sigma'} 4\,Z\,\mathrm{d}s,\quad t\ge0.
\end{eqnarray*}
Next, we introduce the time change
\[
s(t)=\inf\Big\{s\ge0:\;\int_{t\wedge\sigma}^{t\wedge\sigma'} D(u)\,\mathrm{d}u=t\Big\},\quad 0\leq t\leq\int_\sigma^{\sigma'} D(u)\,\mathrm{d}u.
\]
and rewrite the latter stochastic integral equation for $Z$ as
\[
Z(s(t_2))-Z(s(t_1))=(t_2-t_1)+M(s(t_2))-M(s(t_1)),\quad 0\le t_1\le t_2\le \int_\sigma^{\sigma'} D(u)\,\mathrm{d}u.
\]
At this point, a routine application of the Optional Sampling Theorem (see e.g. the proof of
\cite[Theorem 4.6, Chapter 3]{KS} for an argument of this type) shows that $M(s(t))$, $t\ge0$ is a
local martingale in its natural filtration. In addition,
\begin{eqnarray*}
\langle M\rangle(s(t_2))-\langle M\rangle(s(t_1))=\int_{t_1}^{t_2} \frac{4\,Z(s(t))}{D(s(t))}\,\mathrm{d}t
\le 2\int_{t_1}^{t_2} Z(s(t))\,\mathrm{d}t,\quad 0\le t_1\le t_2\le \int_\sigma^{\sigma'} D(u)\,\mathrm{d}u.
\end{eqnarray*}
Hence, the process
\[
\tilde{Z}(t)=
\begin{cases}
Z(s(t)) & \text{if}\;\;\;t\in\big[0,\int_\sigma^{\sigma'} D(u)\,\mathrm{d}u\big] \\
Z\big(s\big(\int_\sigma^{\sigma'} D(u)\,\mathrm{d}u\big)\big)+\big(t-\int_\sigma^{\sigma'} D(u)\,\mathrm{d}u\big) & \text{if}\;\;\;t\in\big(\int_\sigma^{\sigma'} D(u)\,\mathrm{d}u,\infty\big)
\end{cases}
\]
falls into the setting of Lemma \ref{hit0lemma}. The result of that lemma implies that the original process $Z(t)$, $t\ge0$ does not hit zero on the time interval $[\sigma,\sigma']$ with probability one. Changing the measure back to the original probability measure by a suitable application of Girsanov's Theorem we conclude that the same is true under the original probability measure. This is the desired contradiction.
\end{proof}

Putting together Lemmas \ref{lemma_AB_follows_CDEF} and \ref{lemma_CDEF} we deduce that $\tau_0[X]=\infty$ for all $\theta>1$.

\medskip

Finally, for $\theta\ge 2$ and any $T>0$, we have shown that the law of our process up to time
$\tau_\delta[X]\wedge T$ is absolutely continuous with respect to the law of a process comprised
of a number of Brownian motions and Bessel processes of dimension $\theta$. The definition of the
latter implies that two of its components can collide only if the corresponding Bessel process
hits zero. However, it is well-known that the Bessel process of dimension $\theta\ge 2$ does not
reach zero with probability one (see e.g. \cite[Chapter XI, Section 1]{RY}).  It follows that
$\widehat \tau_\delta[X]\ge\lim_{T\to\infty} \tau_\delta[X]\wedge T=\tau_\delta[X]$. Passing to
the limit $\delta\downarrow 0$, we conclude that $\widehat\tau_0[X]=\infty$.

\section{Convergence to multilevel Dyson Brownian Motion} \label{sec:multi}

In this section we study the diffusive scaling limit of the multilevel process $X^{multi}_{disc}$ of Definition \ref{Def_X_multi}. We start by formulating our main results.

\medskip

We fix $\theta>0$, let $\eps>0$ be a small parameter and define the $\frac{N(N+1)}{2}$-dimensional
stochastic process $Y^{mu}_\eps=((Y^{mu}_\eps)_i^k:\,1\le i\le k\le N)$ by
$$
(Y^{mu}_\eps)^k_i(t)= \eps^{1/2} \Big((X^{multi}_{disc})^k_{N+1-i}\Big(\frac{t}{\theta\eps}\Big)-\frac{t}{\eps}\Big),\;\;t\ge0:\quad 1\leq i\leq k\leq N
$$
where $(X^{multi}_{disc})_i^k$, $1\leq i\leq k\leq N$ are the coordinate processes of
$X^{multi}_{disc}$. Here, in contrast to Definition \ref{Def_X_multi}, we allow $X^{multi}_{disc}$
(or, equivalently, $Y^{mu}_\eps$) to start from an arbitrary initial condition, in particular, one
that depends on $\eps$. In addition, we use the notation
$D^{N(N+1)/2}=D([0,\infty),\rr^{N(N+1)/2})$ for the space of right-continuous paths with left
limits taking values in $\rr^{N(N+1)/2}$, endowed with the Skorokhod topology.

\begin{theorem}\label{Theorem_multilevel_tight}
Let $\theta>0$ and suppose that the family of initial conditions $Y^{mu}_\eps(0)$, $\eps\in(0,1)$
is tight on $\rr^{N(N+1)/2}$. Then the family $Y^{mu}_\eps$, $\eps\in(0,1)$ is tight on
$D^{N(N+1)/2}$.
\end{theorem}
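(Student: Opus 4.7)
The plan is to verify the Ethier--Kurtz tightness criterion \cite[Corollary 3.7.4]{EK} for each coordinate $(Y^{mu}_\eps)^k_i$ individually: tightness of $(Y^{mu}_\eps)^k_i(t)$ in $\rr$ for every fixed $t\ge 0$, plus a uniform modulus of continuity bound on compact time intervals. Since the initial conditions are assumed tight, it suffices to control the increments of the coordinates in the spirit of Section \ref{Section_tightness_single}. I would run a double induction: an outer induction over the levels $k=1,2,\ldots,N$, and, for each fixed $k$, an inner induction over the particle index $i$, sweeping from the rightmost particle to the leftmost in order to bound upward increments and then from left to right to bound downward increments, exactly as in Section \ref{Section_tightness_single}.

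For the base case $k=1$ the single particle $(Y^{mu}_\eps)^1_1$ coincides with the rescaled version of the one-level chain $X^1_{disc}$, so the analysis of Section \ref{Section_tightness_single} applies verbatim (and does not require $\theta\ge 1/2$). For the inductive step, assume the tightness of all coordinates on levels $1,\ldots,k-1$. For level $k$, the idea is to split the jumps of $(Y^{mu}_\eps)^k_i$ into \emph{spontaneous} jumps, occurring with the rate given by the first factor $\JQ_{(\lambda^k\sqcup\square)/\lambda^k}(\rho_1)$ in \eqref{eq_jump_intensity}, and \emph{pushed} jumps triggered by moves at level $k-1$ that would otherwise violate the interlacing. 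The first factor is exactly the spontaneous rate of the single-level chain, whose asymptotic expansion is furnished by Lemma \ref{Lemma_expansion_of_rates_single}; consequently it is bounded above by $\eps^{-1}+O(\eps^{-1/2})$ as long as the spacings between adjacent particles on level $k$ stay bounded away from zero. The pushed jumps, by the very construction of $X^{multi}_{disc}$ recalled after \eqref{eq_jump_intensity}, are in bijection with the jumps at level $k-1$ that force them, so their number in a time window is dominated by the total number of jumps of the at most two adjacent coordinates on level $k-1$ during the same window, a quantity which is tight as $\eps\downarrow 0$ by the inductive hypothesis.

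With this decomposition the two Ethier--Kurtz conditions follow by the same Poisson-process coupling used in Section \ref{Section_tightness_single}. Given $\Delta,T>0$, the inductive hypothesis yields $\delta_1>0$ for which the increments of all level-$(k-1)$ coordinates over any sub-window of $[0,T]$ of length at most $\delta_1$ stay below $\Delta/3$ with probability at least $1-\Delta/3$, uniformly in $\eps$; on this good event the contribution of pushed jumps to the increments of $(Y^{mu}_\eps)^k_i$ is bounded by $\Delta/3$ as well. The inner induction on $i$ (rightmost particle first) guarantees on a further event of probability at least $1-\Delta/3$ that the level-$k$ spacings relevant for $(Y^{mu}_\eps)^k_i$ exceed $\Delta/3$, which reduces the control of spontaneous jumps to a coupling with a Poisson jump process of constant intensity $\eps^{-1}+C(\Delta)\eps^{-1/2}+O(1)$ and drift $-\eps^{-1/2}$, exactly as in the argument surrounding \eqref{eq_rates_bound}. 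A symmetric sweep of the particles from left to right supplies the matching bound on downward increments. Since the individual jump sizes vanish as $\eps\downarrow 0$, \cite[Theorem 3.10.2]{EK} further guarantees that any subsequential limit has continuous sample paths.

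The main obstacle I anticipate is a rigorous justification that a pushed jump of $(Y^{mu}_\eps)^k_i$ can indeed be charged, uniformly in $\eps$, to a bounded number of jumps of prescribed particles on level $k-1$. This requires analyzing the branching-ratio factor $\J_{(\lambda^k\sqcup\square)/\lambda^{k-1}(s)}(1)/\J_{\lambda^k(s-)/\lambda^{k-1}(s)}(1)$ in \eqref{eq_jump_intensity}: using the explicit formula \eqref{eq_branching_rule}, this factor is $O(1)$ when the interlacing is strict on both adjacent levels, whereas it blows up precisely when a level-$(k-1)$ particle meets a level-$k$ particle, and the resulting instantaneous jump at level $k$ restores the interlacing with a box count that matches the level-$(k-1)$ move. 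Once this charging is in place the remainder of the proof is a structural replay of Section \ref{Section_tightness_single}, with the inductive hypothesis on level $k-1$ playing the role that the already-treated smaller-index particles played in the single-level argument.
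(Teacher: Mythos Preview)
Your decomposition into ``spontaneous'' and ``pushed'' jumps rests on a mis-identification of the rates. The first factor $\JQ_{(\lambda^k\sqcup\square)/\lambda^k}(\rho_1)$ in \eqref{eq_jump_intensity} is \emph{not} the single-level jump rate: by Proposition \ref{prop_intensities_one_level} the latter carries the additional factor $J_\mu(1^N)/J_\lambda(1^N)$, and this is precisely what Lemma \ref{Lemma_expansion_of_rates_single} expands. So you cannot invoke that lemma for the level-$k$ spontaneous part. The correct input is the multilevel rate expansion of Lemma \ref{Lemma_expansion_of_rates_multi}, whose first-order term is
\[
\sum_{m\ne i}\frac{1-\theta}{y^k_i-y^k_m}-\sum_{m=1}^{k-1}\frac{1-\theta}{y^k_i-y^{k-1}_m},
\]
qualitatively different from the single-level drift $\sum_{m\ne i}\theta/(y_i-y_m)$.

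This has a structural consequence your sketch misses: the sign of the singular terms flips at $\theta=1$, so which spacings are ``dangerous'' --- and hence which coordinate to treat first in the inner sweep and which spacing to assume exceeds $\Delta/3$ before invoking the Poisson coupling --- depends on whether $\theta\le1$ or $\theta>1$. The paper's proof (Section \ref{Section_multi_tight}) makes this case split explicitly: for $\theta>1$ the exact product formula from the proof of Lemma \ref{Lemma_expansion_of_rates_multi} gives an unconditional bound $\le\eps^{-1}$ on the rate of $(Y^{mu}_\eps)^k_1$, so one starts the sweep at $i=1$ and for $i\ge2$ couples when $(Y^{mu}_\eps)^k_i-(Y^{mu}_\eps)^{k-1}_{i-1}>\Delta/3$; for $0<\theta\le1$ the unconditional bound holds instead at $i=k$, and for $i<k$ one also uses the interlacing inequality $((Y^{mu}_\eps)^k_i)_+\le((Y^{mu}_\eps)^{k-1}_i)_+$ directly. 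Once the correct multilevel rate is in hand the push accounting becomes unnecessary: the relevant level-$(k-1)$ interaction is already captured by the branching-ratio factor inside the rate, and the induction hypothesis is used only to control the probability that the adjacent-level spacing drops below $\Delta/3$, exactly as in the single-level argument.
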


We defer the proof of Theorem \ref{Theorem_multilevel_tight} to Section \ref{Section_multi_tight}.

\medskip

Next, we let $Y^{mu}$ be an arbitrary limit point as $\eps\downarrow0$ of the tight family
$Y^{mu}_\eps$, $\eps\in(0,1)$. For $\theta\ge 2$, we can uniquely identify the limit point with
the solution of \eqref{eq_intDBM_SDE}, thus, obtaining the diffusive scaling limit. For
$\theta\in\bigl[\frac{1}{2},2\bigr)$, we give a partial result towards such an identification.

\begin{theorem}
\label{Theorem_Limit_SDE} Let $\theta\ge 2$ (that is, $\beta=2\theta\ge 4$) and suppose that the
initial conditions $Y^{mu}_\eps(0)$, $\eps\in(0,1)$ converge as $\eps\downarrow 0$ in distribution
to a limit $Y^{mu}(0)$ which takes values in the interior of the Gelfand-Tsetlin cone
$\overline{\mathcal{G}^N}$ with probability one. Then the family $Y^{mu}_\eps$, $\eps\in(0,1)$
converges as $\eps\downarrow 0$ in distribution in $D^{N(N+1)/2}$ to the unique solution of the
system of SDEs
$$
\mathrm{d}(Y^{mu})^k_i = \Biggl(\sum_{m\neq i} \frac{(1-\theta)}{(Y^{mu})^k_i-(Y^{mu})^k_m}
-\sum_{m=1}^{k-1} \frac{(1-\theta)}{(Y^{mu})^k_i-(Y^{mu})^{k-1}_m}\Biggr)\,\mathrm{d}t +
\mathrm{d}W_i^k , \;\;1\leq i\leq k\leq N
$$
started from $Y^{mu}(0)$ and where $W_i^k$, $1\leq i\leq k\leq N$ are independent standard Brownian motions.
\end{theorem}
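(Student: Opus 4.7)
The proof combines Theorem \ref{Theorem_multilevel_tight}, an asymptotic expansion of the jump rates \eqref{eq_jump_intensity}, a martingale problem argument, and the weak uniqueness guaranteed by Theorem \ref{theorem_intDBM}. Tightness of $\{Y^{mu}_\eps\}$ on $D^{N(N+1)/2}$ is immediate from Theorem \ref{Theorem_multilevel_tight}; by Prokhorov's theorem and Skorokhod's representation we pass to a subsequence $\eps_n\downarrow 0$ and assume $Y^{mu}_{\eps_n}\to Y^{mu}$ almost surely on a common probability space. Since the jump sizes are of order $\eps^{1/2}$ and $X^{multi}_{disc}$ preserves the interlacing, the limit $Y^{mu}$ has continuous paths taking values in $\GG$.

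The next step is to expand the jump rate \eqref{eq_jump_intensity} in a neighborhood of an interior configuration of $\GG$. The factor $\JQ_{(\lambda^k\sqcup\square)/\lambda^k}(\rho_1)$ is handled exactly as in Lemma \ref{Lemma_expansion_of_rates_single}, contributing the same single-level term that appeared in the proof of Theorem \ref{theorem_DBM}. For the cross-level ratio $\J_{(\lambda^k\sqcup\square)/\lambda^{k-1}}(1)/\J_{\lambda^k/\lambda^{k-1}}(1)$, the branching rule \eqref{eq_branching_rule} shows that adding the box $(i,j)$ to $\lambda^k$ modifies only those Pochhammer symbols whose arguments or length parameters involve the entry $\lambda^k_i$; a Taylor expansion of these shifted Pochhammer ratios yields a correction of order $\eps^{1/2}$ whose coefficients are explicit rational functions of the differences $(Y^{mu}_\eps)^k_{N+1-i}-(Y^{mu}_\eps)^{k-1}_{N-m}$. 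Combining the two expansions with the constant drift $-\eps^{-1/2}$ produced by the centering, and accounting for the reversal of the indexing, one obtains, uniformly on compact subsets of the open cone $\mathcal{G}^N$, that the infinitesimal drift of the coordinate $(Y^{mu}_\eps)^k_i$ converges to the right-hand side of the $(k,i)$-component of the drift in \eqref{eq_intDBM_SDE}.

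With these asymptotics in hand we follow the template of Step 3 in the proof of Theorem \ref{theorem_DBM}, adapted to the multilevel setting. For each $\delta>0$ pick a test function $f^\delta_{k,i}$ supported in the set where every pair of adjacent-level coordinates is separated by more than $\delta$ and coinciding there with the coordinate $y^k_i$. The pre-limit martingales
\[
M^{f^\delta_{k,i}}_\eps(t)=f^\delta_{k,i}(Y^{mu}_\eps(t))-f^\delta_{k,i}(Y^{mu}_\eps(0))-\int_0^t (L^\eps f^\delta_{k,i})(Y^{mu}_\eps(r))\,\mathrm{d}r
\]
are uniformly integrable on compact time intervals and converge to martingales that agree, up to the stopping time $\tau_\delta[Y^{mu}]$ of Section \ref{section_proof_of_uniqueness_integral}, with the martingale parts of the stopped coordinates $(Y^{mu})^k_i(\cdot\wedge\tau_\delta[Y^{mu}])$. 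An application of \cite[Chapter 5, Proposition 4.6]{KS} (used here in place of a direct covariation computation, as flagged by the remark after Step 3b) produces independent standard Brownian motions $W^k_i$, $1\le i\le k\le N$, on a possibly enlarged probability space that realize $Y^{mu}(\cdot\wedge\tau_\delta[Y^{mu}])$ as a weak solution of the localized equation \eqref{eq_stopped_integral}. Because $\theta\ge 2$, Proposition \ref{proposition_multlilevel_stopping}(b) gives $\widehat{\tau}_0[Y^{mu}]=\infty$ almost surely, so $\tau_\delta[Y^{mu}]\uparrow\infty$ as $\delta\downarrow 0$, and passing to the limit exhibits $Y^{mu}$ as a weak solution of the full system \eqref{eq_intDBM_SDE} on $[0,\infty)$. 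Weak uniqueness from Theorem \ref{theorem_intDBM} then forces all subsequential limits of $Y^{mu}_\eps$ to share a single law, and the standard subsequence principle upgrades this to convergence of the entire family.

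The main technical obstacle is the covariation computation underlying the invocation of \cite[Chapter 5, Proposition 4.6]{KS}. Unlike the single-level situation analyzed in Lemma \ref{Lemma_covariance_single}, the push interactions built into \eqref{eq_jump_intensity} force particles on adjacent levels to jump simultaneously in the discrete dynamics, so that the pre-limit cross-covariations $\langle M^{y^k_i}_\eps,M^{y^{k-1}_{i'}}_\eps\rangle$ do not vanish identically, even though the driving Brownian motions in \eqref{eq_intDBM_SDE} must be independent. Showing that these covariations disappear in the limit requires controlling the Lebesgue measure of the set of times at which an adjacent-level pair lies within $O(\eps^{1/2})$ of one another. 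This is precisely where the hypothesis $\theta\ge 2$ enters: the Bessel-dimension comparison from the proof of Proposition \ref{proposition_multlilevel_stopping}(b) supplies a uniform repulsion that renders the cumulative contribution of push events on compact time intervals asymptotically negligible.
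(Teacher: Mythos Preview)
Your outline follows the paper's strategy closely: tightness from Theorem \ref{Theorem_multilevel_tight}, the rate expansion of Lemma \ref{Lemma_expansion_of_rates_multi}, a martingale-problem argument with compactly supported test functions, and the weak uniqueness of Theorem \ref{theorem_intDBM}. Two points, however, deserve correction.

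First, there is a mismatch in your stopping times. Your test functions $f^\delta_{k,i}$ are supported where every \emph{pair} of adjacent-level particles is separated by $\delta$; the stopping time naturally produced by the martingale argument is therefore $\widehat\tau_\delta$ (two-particle), not the $\tau_\delta$ of Section \ref{section_proof_of_uniqueness_integral} (three-particle). The paper accordingly works with $\widehat\tau_{\delta,K}$, obtains \eqref{eq_intDBM_SDE} up to that time, and only then uses the Bessel comparison from the end of Section \ref{section_stopping_time_infinity} to pass from $\widehat\tau_{\delta,K}$ to $\tau_\delta$.

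Second, and more importantly, your final paragraph misidentifies the role of the hypothesis $\theta\ge2$. You describe the ``main technical obstacle'' as controlling the contribution of push events to the prelimit cross-covariations $\langle M^{y^k_i}_\eps,M^{y^{k-1}_{i'}}_\eps\rangle$. But your own setup already disposes of this: push events occur only when adjacent-level particles coincide, i.e.\ on the boundary of $\GG$, and the test functions vanish there. In the support of $f^\delta_{k,i}\cdot f^\delta_{k',i'}$ the discrete process never moves two coordinates simultaneously, so the cross term in the generator of the product is identically zero at the prelimit level --- this is exactly why the paper can invoke \cite[Chapter 5, Proposition 4.6]{KS} without any occupation-time estimate. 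The genuine reason $\theta\ge2$ is needed is simpler: the martingale-problem identification only reaches up to $\widehat\tau_\delta$, and to send $\delta\downarrow0$ one needs $\widehat\tau_0[Y^{mu}]=\infty$, which is precisely Proposition \ref{proposition_multlilevel_stopping}(b). For $1<\theta<2$ one has only $\tau_0=\infty$ from part (a), and extending the identification across boundary excursions of $Y^{mu}$ is the open problem the paper flags after the statement of Theorem \ref{Theorem_Limit_SDE}.
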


We give the proof of Theorem \ref{Theorem_Limit_SDE} in Section \ref{Section_multi_SDE}. We expect
Theorem \ref{Theorem_Limit_SDE} to be valid for all $\theta>1$, but we are not able to prove this
generalization.

\begin{theorem} \label{Theorem_restriction_DBM}
Suppose that the initial conditions $Y^{mu}_\eps(0)$, $\eps\in(0,1)$ converge as $\eps\downarrow 0$ in distribution to a limit $Y^{mu}(0)$ which takes values in the interior of the Gelfand-Tsetlin cone $\overline{\mathcal{G}^N}$ with probability one. In addition, suppose that the distribution of $Y^{mu}(0)$ is $\theta$-Gibbs in the sense of Definition \ref{thetaGibbsdef}.
\begin{enumerate}[(a)]
\item If $\theta\ge\frac{1}{2}$ (that is, $\beta=2\theta\geq1$), then the restriction of $Y^{mu}$ to level $N$, that is the process $((Y^{mu})^N_1,\dots,(Y^{mu})^N_N)$, is a $(2\theta)$-Dyson Brownian Motion:
$$
\mathrm{d}(Y^{mu})^N_i(t) = \sum_{m\neq i} \frac{\theta}{(Y^{mu})^N_i(t)-(Y^{mu})^N_m(t)}\,\mathrm{d}t
+ \mathrm{d}W_i ,\;\;1\leq i \leq N
$$
with $W_i^k$, $1\leq i\leq k\leq N$ being independent standard Brownian motions.
\item For any $\theta>0$ and any fixed $t>0$, the distribution of $Y^{mu}(t)$ is $\theta$-Gibbs.
\end{enumerate}
\end{theorem}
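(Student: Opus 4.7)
\textbf{Proof proposal for Theorem \ref{Theorem_restriction_DBM}.}
The plan is to run the discrete approximation $X^{multi}_{disc}$ from carefully chosen Jack--Gibbs initial data and pass to the diffusive limit, exploiting two properties of the discrete dynamics: by Proposition~\ref{prop_intertwining_disc}, Jack--Gibbs laws are preserved and the level-$N$ restriction is Markovian and given by $X^N_{disc}$; and by Proposition~\ref{proposition_convergence_to_theta_Gibbs}, rescaled Jack--Gibbs laws degenerate to $\theta$--Gibbs laws as soon as the top level converges. Concretely, given the $\theta$--Gibbs law $Y^{mu}(0)$, I would first construct, for every $\eps>0$, a Jack--Gibbs measure $P^\eps$ on $\GT^{(N)}$ whose top level, under the rescaling $\lambda^N_i=\eps^{-1/2}y^N_{N+1-i}+O(1)$ (with an integer shift $a(\eps)$ chosen so that Young-diagram coordinates are non-negative integers), converges in distribution to the top-level marginal of $Y^{mu}(0)$; the remaining levels are filled in using the Jack--Gibbs conditional \eqref{eq_Jack_Gibbs}. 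Proposition~\ref{proposition_convergence_to_theta_Gibbs} then yields that the full rescaled $P^\eps$ converges to a $\theta$--Gibbs law, which must equal the law of $Y^{mu}(0)$ since the conditional structure in Definition~\ref{thetaGibbsdef} uniquely reconstructs the full law from its top marginal (by the Dixon--Anderson identity used in the proof of Proposition~\ref{proposition_convergence_to_theta_Gibbs}). Since $Y^{mu}_\eps(0)$ is assumed to converge to $Y^{mu}(0)$, one may (up to a coupling on a common probability space) assume the initial condition for $Y^{mu}_\eps$ is precisely $P^\eps$.

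\textbf{Part (a).} With these initial conditions, Proposition~\ref{prop_intertwining_disc} says that the level-$N$ restriction of $X^{multi}_{disc}$ evolves as $X^N_{disc}$ started from the top marginal of $P^\eps$. Thus the level-$N$ restriction of $Y^{mu}_\eps$ is exactly the process $Y^N_\eps$ of Theorem~\ref{theorem_DBM}. For $\theta\ge 1/2$, Theorem~\ref{theorem_DBM} and Proposition~\ref{Prop_gen_DBM_thm} then identify the weak limit of $Y^N_\eps$, starting from a deterministic interior initial condition, with the unique strong solution of \eqref{gen_DBM} at $\beta=2\theta$. For the random initial condition at hand, I would condition on $Y^N_\eps(0)$ and invoke continuity of the map from initial condition to the law of the $2\theta$-DBM (again by uniqueness in Proposition~\ref{Prop_gen_DBM_thm}) to transfer the convergence. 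Hence the level-$N$ projection of any subsequential limit $Y^{mu}$ solves \eqref{gen_DBM} with $\beta=2\theta$, as claimed.

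\textbf{Part (b).} By Proposition~\ref{prop_intertwining_disc}, for every $\eps>0$ and every time $s\ge 0$, the law of $X^{multi}_{disc}(s)$ is Jack--Gibbs. Consequently the law of $Y^{mu}_\eps(t)$ is a rescaled Jack--Gibbs law for every fixed $t>0$. By Theorem~\ref{Theorem_multilevel_tight} the family $Y^{mu}_\eps$ is tight, so along any subsequence $\eps_n\downarrow 0$ along which $Y^{mu}_{\eps_n}\to Y^{mu}$ in $D^{N(N+1)/2}$, one has $Y^{mu}_{\eps_n}(t)\to Y^{mu}(t)$ in distribution (continuity of the limit paths, coming from the vanishing jump size as in Section~\ref{Section_tightness_single} extended to the multilevel setting, rules out issues at fixed times). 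In particular, the top-level marginals converge, so Proposition~\ref{proposition_convergence_to_theta_Gibbs} applied to the sequence of rescaled Jack--Gibbs laws of $Y^{mu}_{\eps_n}(t)$ identifies the limit as a $\theta$--Gibbs law; since that limit is the law of $Y^{mu}(t)$, this law is $\theta$--Gibbs. Note that this argument uses only tightness and the Jack--Gibbs structure, so it works for any $\theta>0$, without the restriction $\theta\ge 1/2$ needed in part (a).

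\textbf{Main obstacle.} The nontrivial part is the reduction to the case of a Jack--Gibbs initial law. While the assumption in the theorem only gives that $Y^{mu}(0)$ is $\theta$--Gibbs in the continuous sense, I need to exhibit discrete initial laws that (i) are Jack--Gibbs on $\GT^{(N)}$ so that Proposition~\ref{prop_intertwining_disc} applies, and (ii) rescale to $Y^{mu}(0)$ in distribution. This requires picking the top-level marginal at the discrete level to approximate an absolutely continuous law on $\overline{\mathcal{W}^N}$ while remaining supported on Young diagrams, and verifying that the subsequent extension by Jack--Gibbs conditionals is compatible with the rescaling of Proposition~\ref{proposition_convergence_to_theta_Gibbs}. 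The uniqueness of the $\theta$--Gibbs extension from a given top marginal, which is key to identifying the limit, is built into Definition~\ref{thetaGibbsdef} and relies on the Dixon--Anderson identity \eqref{eq_DA}.
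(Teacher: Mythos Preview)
Your approach is essentially the paper's own: the proof there is a single sentence citing Proposition~\ref{prop_intertwining_disc}, Proposition~\ref{proposition_convergence_to_theta_Gibbs}, and Theorem~\ref{theorem_DBM}, and you have correctly unpacked how these three ingredients combine. You have also put your finger on the one point the paper leaves implicit, namely that Proposition~\ref{prop_intertwining_disc} requires the \emph{discrete} initial law to be Jack--Gibbs, whereas the hypothesis of the theorem only concerns the continuous limit $Y^{mu}(0)$.

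There is, however, a genuine gap in your proposed fix. You write that since $Y^{mu}_\eps(0)\Rightarrow Y^{mu}(0)$ and your constructed Jack--Gibbs laws $P^\eps$ also converge to $Y^{mu}(0)$, ``one may (up to a coupling) assume the initial condition for $Y^{mu}_\eps$ is precisely $P^\eps$.'' This does not follow: two sequences of initial laws with the same weak limit do \emph{not} in general produce the same subsequential limits of the full process, because for non--Jack--Gibbs initial data the level-$N$ projection of $X^{multi}_{disc}$ need not even be Markovian, so the dynamics can be genuinely different along the way. What your construction actually proves is the theorem for the \emph{particular} family of approximations you built, not for an arbitrary family $Y^{mu}_\eps$ satisfying the stated hypothesis. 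The paper's one-line proof is equally silent on this point; the intended reading appears to be that the statement concerns the limiting object, so one is free to choose the approximating sequence, and the natural choice is a Jack--Gibbs one (discretize the top-level marginal of $Y^{mu}(0)$ and extend down via \eqref{eq_Jack_Gibbs}). If you want the theorem in the literal generality stated, an additional argument showing that the subsequential limits depend only on the limiting initial law would be needed, and neither you nor the paper supplies it.
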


We expect the first part of Theorem \ref{Theorem_restriction_DBM} to be valid for all $\theta>0$, but we are currently not able to prove this.

\begin{proof}[Proof of Theorem \ref{Theorem_restriction_DBM}]
The theorem follows from a combination of Proposition \ref{prop_intertwining_disc}, Proposition \ref{proposition_convergence_to_theta_Gibbs} and Theorem \ref{theorem_DBM}.
\end{proof}

\begin{corollary} \label{cor_zero_initial} Take any $\theta>0$ and suppose that $Y^{mu}_\eps(0)=0\in\rr^{N(N+1)/2}$, $\eps\in(0,1)$. Then, for any $t\ge 0$, the distribution of $Y^{mu}(t)$ is given by the Hermite $\beta=2\theta$ corners process of variance $t$ (see Definition \ref{defbetacorner}).
\end{corollary}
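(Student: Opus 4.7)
The plan is to combine three ingredients already established in the paper: (i) the exact identification of the one-time distribution of the discrete dynamics via Proposition \ref{prop_intertwining_disc}; (ii) the diffusive-scaling computation of Corollary \ref{corollary_convergence_multi_level_fixed_time}; and (iii) the tightness plus path-regularity of the rescaled process from Theorem \ref{Theorem_multilevel_tight}. The statement itself concerns only a single fixed time $t$, so no information about the joint-in-time law of $Y^{mu}$ is needed.

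First, I observe that the initial condition $Y^{mu}_\eps(0)=0\in\rr^{N(N+1)/2}$ corresponds under the map
$(Y^{mu}_\eps)^k_i(0)=\eps^{1/2}\bigl((X^{multi}_{disc})^k_{N+1-i}(0)-0\bigr)$
to $X^{multi}_{disc}(0)$ being identically the array of empty Young diagrams $\lambda^1=\dots=\lambda^N=\emptyset$. This is precisely the ascending Jack process $\JM^{asc}_{\rho;N}$ associated with the empty specialization $\rho$ (see the remark following the definition of $\JM^{asc}_{\rho;N}$), which is a Jack--Gibbs measure. Consequently, Proposition \ref{prop_intertwining_disc} applies and gives, for every $s>0$, that $X^{multi}_{disc}(s)$ is distributed according to $\JM^{asc}_{\mathfrak{r}_s;N}$.

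Next, I specialize to $s=\eps^{-1}t/\theta$ and apply Corollary \ref{corollary_convergence_multi_level_fixed_time}, which asserts that under exactly the rescaling defining $Y^{mu}_\eps$, the measures $\JM^{asc}_{\mathfrak{r}_s;N}$ converge weakly to the Hermite $2\theta$ corners process of variance $t$ on the Gelfand--Tsetlin cone $\overline{\GGo}$. Hence, for every fixed $t>0$,
\[
Y^{mu}_\eps(t)\;\xrightarrow[\eps\downarrow 0]{\;\mathrm{d}\;}\;\text{Hermite $2\theta$ corners process of variance $t$}.
\]

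Finally, by Theorem \ref{Theorem_multilevel_tight} the family $\{Y^{mu}_\eps\}_{\eps\in(0,1)}$ is tight in $D^{N(N+1)/2}$. Since the maximal jump size of $Y^{mu}_\eps$ is of order $\eps^{1/2}$ (the rescaled lattice spacing), every subsequential limit $Y^{mu}$ has continuous paths almost surely, by the standard criterion (e.g.\ \cite[Theorem 3.10.2]{EK}); the argument is identical to the one already used in Section \ref{Section_tightness_single}. On the set of continuous paths the evaluation functional $\omega\mapsto\omega(t)$ is continuous for the Skorokhod topology, so for any subsequence along which $Y^{mu}_\eps\Rightarrow Y^{mu}$ we automatically have $Y^{mu}_\eps(t)\Rightarrow Y^{mu}(t)$. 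Combining with the previous paragraph, every subsequential limit $Y^{mu}(t)$ has the law of the Hermite $2\theta$ corners process of variance $t$, which yields the corollary. For $t=0$ the statement is trivial (the measure degenerates to a point mass at the origin, consistent with variance $0$).

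The only real subtlety, and the one I would spend the most care on, is the passage from convergence of $Y^{mu}_\eps(t)$ in distribution to identification of the law of $Y^{mu}(t)$ along a given subsequential limit; this relies on the almost-sure continuity of the limiting paths, which is ensured by the vanishing jump sizes together with the tightness in $D^{N(N+1)/2}$. Once this point is granted, the rest is a direct chaining of exact discrete identities with the asymptotic evaluation of Corollary \ref{corollary_convergence_multi_level_fixed_time}.
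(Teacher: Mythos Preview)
Your proof is correct and follows the same route as the paper, which simply cites Proposition \ref{prop_intertwining_disc} and Corollary \ref{corollary_convergence_multi_level_fixed_time}. You add a careful justification of the passage from weak convergence of the one-time marginals $Y^{mu}_\eps(t)$ to the law of $Y^{mu}(t)$ for a subsequential path-space limit (via tightness and continuity of limit paths), a point the paper leaves implicit.
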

\begin{rmk}
Since for any $t>0$, the Hermite $\beta=2\theta$ corners process of variance $t$ is supported by the interior of the Gelfand-Tsetlin cone $\overline{\mathcal{G}^N}$, it follows that Theorem \ref{Theorem_Limit_SDE} (for $\theta\ge
2$) can be applied in this case as well. Consequently, for $\theta\ge2$, the process $Y^{mu}$ started from the zero initial condition is a diffusion that combines Dyson Brownian motions and corners processes into a single picture as desired.
\end{rmk}

\begin{proof}[Proof of Corollary \ref{cor_zero_initial}]
The corollary is a consequence of Proposition \ref{prop_intertwining_disc} and Corollary \ref{corollary_convergence_multi_level_fixed_time}.
\end{proof}

%

The rest of this section is devoted to the proof of Theorems \ref{Theorem_multilevel_tight} and \ref{Theorem_Limit_SDE}. The structure of the proof is similar to that of Theorem \ref{theorem_DBM}: In Section \ref{Section_multi_rates} we analyze the asymptotic behavior of the jump rates of the processes $Y^{mu}_\eps$, $\eps\in(0,1)$, in Section \ref{Section_multi_rates} we use this asymptotics to prove that the family $Y^{mu}_\eps$, $\eps\in(0,1)$ is tight, and in Section \ref{Section_multi_SDE} we deduce the SDE \eqref{eq_intDBM_SDE} for subsequential limits as $\eps\downarrow0$ of this family when $\theta\ge2$. We omit the details in the parts that are parallel to the arguments of Section \ref{Section_DBM}.

\subsection{Step 1: Rates}\label{Section_multi_rates}
We start by noting that, for each $\eps\in(0,1)$, $Y^{mu}_\eps$ is a continuous time Markov process with state space $\GG$, a (constant) drift of $-\eps^{-1/2}$ in each coordinate and jump rates
$$
q^{mu}_\eps(y,y',t)=\frac{1}{\theta\eps}\,q_{\frac{t}{\theta}\eps^{-1}+\hat{y}\eps^{-1/2}\,\rightarrow\,\frac{t}{\theta}\eps^{-1}+\hat{y'}\eps^{-1/2}}
$$
where $\hat{y}$, $\hat{y'}$ are the vectors obtained from $y$, $y'$ by reordering the coordinates on each level in decreasing order, and intensities $q$ are given by \eqref{eq_jump_intensity}. Write $y'\approx_\eps y$ for vectors $y,y'\in\GG$ such that $y'$ can be obtained from $y$ by increasing one coordinate (say, $y^k_i$) by $\eps^{1/2}$, and, if necessary, by increasing other coordinates as well to preserve the interlacing condition (in the sense of the push interaction as explained after \eqref{eq_jump_intensity}). Clearly, $q^{mu}_\eps(y,y',t)=0$ unless $y'\approx_\eps y$. As we will see, in fact, $q^{mu}_\eps(y,y',t)$ does not depend on $t$.

\begin{lemma} \label{Lemma_expansion_of_rates_multi}
For any sequence of vectors $y'\approx_\eps y$ such that $y'$ differs from $y$ in exactly one coordinate and any fixed $k\in\{1,\dots,N\}$, $i\in\{1,2,\ldots,k\}$ one has the following $\eps\downarrow0$ asymptotics:
\begin{equation}
\label{eq_multi_asymp} q^{mu}_\eps(y,y',t)=\eps^{-1}+\eps^{-1/2}\Big(\sum_{m\neq i}
\frac{1-\theta}{y_i^k-y^k_m}-\sum_{m=1}^{k-1} \frac{1-\theta}{y_i^k-y_m^{k-1}}\Big)+O(1)
\end{equation}
with a uniform $O(1)$ remainder on compact subsets of the interior of the Gelfand-Tsetlin cone $\overline{\GGo}$.
\end{lemma}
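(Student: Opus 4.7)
The plan is to follow the template of Step~1 of Section~\ref{Section_DBM}, but now analyze a two-factor jump rate coming from~\eqref{eq_jump_intensity}. First I observe that because $y$ lies in a compact subset of the interior of $\overline{\GGo}$, all interlacing gaps are bounded below, so for sufficiently small $\eps$ an $\eps^{1/2}$-sized increment cannot trigger the push interaction; hence $y'\approx_\eps y$ differing from $y$ in one coordinate corresponds to adding a single box to row $r:=k+1-i$ of $\lambda^k$, and
\[
q^{mu}_\eps(y,y',t)=\frac{1}{\theta\eps}\,\JQ_{(\lambda^k\sqcup\square)/\lambda^k}(\rho_1)\, \frac{\J_{(\lambda^k\sqcup\square)/\lambda^{k-1}}(1)}{\J_{\lambda^k/\lambda^{k-1}}(1)}.
\]
I will analyze the two factors separately under the scaling $\lambda^k_l=\frac{t}{\theta\eps}+y^k_{k+1-l}\eps^{-1/2}$, $\lambda^{k-1}_l=\frac{t}{\theta\eps}+y^{k-1}_{k-l}\eps^{-1/2}$, exploiting the elementary expansion $(A\eps^{-1/2}+c_1)/(A\eps^{-1/2}+c_2)=1+\eps^{1/2}(c_1-c_2)/A+O(\eps)$, valid uniformly for $A$ bounded away from zero.

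For the first factor, \eqref{eq_transition_explicit} yields an explicit product over $l=1,\ldots,r-1$ with arm lengths $a(l,j)=\lambda^k_l-\lambda^k_r-1$. Each $l$ contributes one fraction with $c_1-c_2=\theta$ and one with $c_1-c_2=-\theta$, so the $O(\eps^{1/2})$ corrections cancel in pairs and $\JQ_{(\lambda^k\sqcup\square)/\lambda^k}(\rho_1)=\theta(1+O(\eps))$. This cancellation (which is absent in the single-level computation of Lemma~\ref{Lemma_expansion_of_rates_single}) is the reason the entire $O(\eps^{-1/2})$ correction to $q^{mu}_\eps$ will come solely from the skew Jack ratio.

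For the ratio $\J_{(\lambda^k\sqcup\square)/\lambda^{k-1}}(1)/\J_{\lambda^k/\lambda^{k-1}}(1)$ I will expand both numerator and denominator via the branching rule~\eqref{eq_branching_rule}. Since $\lambda^k\sqcup\square$ and $\lambda^k$ differ only in row $r$, the $(a,b)$-indexed factors of the Pochhammer-product form cancel in the ratio except when $a=r$ (with $r\le b\le k-1$) or $b=r-1$ (with $1\le a\le r-1$); in every surviving fraction $|c_1-c_2|=|\theta-1|$. The first block produces spacings between $\lambda^k_r$ and $\lambda^k_{b+1}$ or $\lambda^{k-1}_b$, while the second block produces spacings between $\lambda^k_r$ and $\lambda^k_a$ or $\lambda^{k-1}_a$. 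After translating from row to particle indices via $a\mapsto k+1-a$ on level $k$ and $a\mapsto k-a$ on level $k-1$, and collecting signs, the four partial sums assemble into
\[
\frac{\J_{(\lambda^k\sqcup\square)/\lambda^{k-1}}(1)}{\J_{\lambda^k/\lambda^{k-1}}(1)}=1+\eps^{1/2}\Biggl(\sum_{m\neq i}\frac{1-\theta}{y^k_i-y^k_m}-\sum_{m=1}^{k-1}\frac{1-\theta}{y^k_i-y^{k-1}_m}\Biggr)+O(\eps).
\]
Multiplying by $\JQ_{(\lambda^k\sqcup\square)/\lambda^k}(\rho_1)/(\theta\eps)$ and using the expansion of the first factor then yields~\eqref{eq_multi_asymp}.

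Uniformity of the $O(1)$ remainder on compact subsets of the interior of $\overline{\GGo}$ is automatic, since on such subsets all spacings $|y^k_i-y^k_m|$, $|y^k_i-y^{k-1}_m|$ are bounded below by a positive constant, so each individual fraction admits a uniformly controlled $1+O(\eps^{1/2})$ expansion. The hard part will be the bookkeeping in the third step: identifying precisely which $(a,b)$-blocks in the branching formula survive the ratio, and correctly matching indices and signs across the four partial sums so that all same-level contributions align with coefficient $1-\theta$ while all cross-level contributions align with coefficient $-(1-\theta)$. As in the proof of Proposition~\ref{proposition_convergence_to_theta_Gibbs}, the substitution $f(\alpha)=\Gamma(\alpha+1)/\Gamma(\alpha+\theta)$ should streamline this manipulation by converting ratios of Pochhammer symbols into ratios of $f$-values.
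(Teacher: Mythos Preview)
Your approach is correct and follows the same template as the paper: write the rate via \eqref{eq_jump_intensity}, insert the explicit formulas \eqref{eq_transition_explicit} and \eqref{eq_branching_rule}, and expand each factor as $1+O(\eps^{1/2})$. The only difference is organizational. You separate the two factors and first observe that $\JQ_{(\lambda^k\sqcup\square)/\lambda^k}(\rho_1)=\theta(1+O(\eps))$, so that the entire drift must come from the branching ratio; the paper instead writes out the full product, performs exact algebraic cancellations between numerators and denominators coming from the $\JQ$ factor and from the branching ratio (several Pochhammer terms telescope against one another), and only then expands. Your route avoids that telescoping step at the cost of having to track four blocks of surviving terms in the branching ratio separately; the paper's route produces a shorter final product but requires spotting the exact cancellations. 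Both lead to the same expansion.

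One small correction to your side remark: the cancellation you describe in the $\JQ$ factor is \emph{not} absent from the single-level computation of Lemma~\ref{Lemma_expansion_of_rates_single}. The factor $\JQ_{(\lambda\sqcup\square)/\lambda}(\rho_1)$ is literally the same object there, and the pairwise $\pm\theta$ cancellation occurs in exactly the same way. The reason the single-level rate has a different drift is that its second factor is $J_\mu(1^N)/J_\lambda(1^N)$ rather than a branching ratio, and it is that factor which produces the $\theta/(y_i-y_j)$ terms. This does not affect the validity of your argument, only the commentary.
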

\begin{proof}
 As above, we write $\hat y_i^k$ for $y_{k+1-i}^k$ and $(\hat y')_i^k$ for $(y')_{k+1-i}^k$. Using \eqref{eq_jump_intensity} and arguing as
 in Lemma \ref{Lemma_expansion_of_rates_single}, we write $q^{mu}_\eps(y,y',t)$ as
\begin{eqnarray*}
\eps^{-1}\prod_{l=1}^{i-1} \frac{(\hat y_l^k-\hat y_i^k)\eps^{-1/2}-1+\theta(i-l+1)}{(\hat
y_l^k-\hat y_i^k)\eps^{-1/2}-1+\theta(i-l)}\cdot
\frac{(\hat y_l^k-\hat y_i^k)\eps^{-1/2}+\theta(i-l-1)}{(\hat y_l^k-\hat y_i^k)\eps^{-1/2}+\theta(i-l)} \\
\prod_{1\leq m\leq n\leq k-1} \frac{\big(((\hat y')^{k-1}_m-(\hat
y')^{k-1}_n)\eps^{-1/2}+\theta(n-m)+\theta\big)_{((\hat y')^{k-1}_n-(\hat y')^k_{n+1})\eps^{-1/2}}}
{\big(((\hat y')^{k-1}_m-(\hat y')^{k-1}_n)\eps^{-1/2}+\theta(n-m)+1\big)_{((\hat y')^{k-1}_n-(\hat y')^k_{n+1})\eps^{-1/2}}}\\
\frac{\big(((\hat y')^k_m-(\hat y')^{k-1}_n)\eps^{-1/2}+\theta(n-m)+1\big)_{((\hat
y')^{k-1}_n-(\hat y')^k_{n+1})\eps^{-1/2}}}
{\big(((\hat y')^k_m-(\hat y')^{k-1}_n)\eps^{-1/2}+\theta(n-m)+\theta\big)_{((\hat y')^{k-1}_n-(\hat y')^k_{n+1})\eps^{-1/2}}}\\
\prod_{1\leq m\leq n\leq k-1}
\frac{\big((y^{k-1}_m-y^{k-1}_n)\eps^{-1/2}+\theta(n-m)+1\big)_{(y^{k-1}_n-y^k_{n+1})\eps^{-1/2}}}
{\big((y^{k-1}_m-y^{k-1}_n)\eps^{-1/2}+\theta(n-m)+\theta\big)_{(y^{k-1}_n-y^k_{n+1})\eps^{-1/2}}}\\
\frac{\big((y^k_m-y^{k-1}_n)\eps^{-1/2}+\theta(n-m)+\theta\big)_{(y^{k-1}_n-y^k_{n+1})\eps^{-1/2}}}
{\big((y^k_m-y^{k-1}_n)\eps^{-1/2}+\theta(n-m)+1\big)_{(y^{k-1}_n-y^k_{n+1})\eps^{-1/2}}}.
\end{eqnarray*}
Using the fact that $y$ and $y'$ differ only in one coordinate, we can simplify the latter
expression to
\begin{eqnarray*}
\eps^{-1}\prod_{m=1}^{i-1} \frac{(\hat y_m^k-\hat y_i^k)\eps^{-1/2}-1+\theta(i-m+1)}{(\hat
y_m^k-\hat y_i^k)\eps^{-1/2}-1+\theta(i-m)}\cdot
\frac{(\hat y_m^k-\hat y_i^k)\eps^{-1/2}+\theta(i-m-1)}{(\hat y_m^k-\hat y_i^k)\eps^{-1/2}+\theta(i-m)} \\
\prod_{m=1}^{i-1} \frac{(\hat y_m^{k-1}-\hat y_i^k)\eps^{-1/2}+\theta(i-1-m)}{(\hat y_m^{k-1}-\hat
y_i^k)\eps^{-1/2}-1+\theta(i-m)}
\prod_{n=i}^{k-1} \frac{(\hat y_i^k-\hat y_{n+1}^k)\eps^{-1/2}+\theta(n-i)+1}{(\hat y_i^k-\hat y_n^{k-1})\eps^{-1/2}+\theta(n-i)+1}\\
\prod_{m=1}^{i-1} \frac{(\hat y_m^k-\hat y_i^k)\eps^{-1/2}+\theta(i-m)-1}{(\hat y_m^k-\hat
y_i^k)\eps^{-1/2}+\theta(i-1-m)}
\prod_{n=i}^{k-1} \frac{(\hat y_i^k-\hat y_n^{k-1})\eps^{-1/2}+\theta(n-i+1)}{(\hat y_i^k-\hat y_{n+1}^k)\eps^{-1/2}+\theta(n-i+1)}\\
=\eps^{-1}\prod_{m=1}^{i-1} \frac{(\hat y_m^k-\hat y_i^k)\eps^{-1/2}-1+\theta(i-m+1)}{(\hat
y_m^k-\hat y_i^k)\eps^{-1/2}+\theta(i-m)} \cdot
\frac{(\hat y_m^{k-1}-\hat y_i^k)\eps^{-1/2}+\theta(i-1-m)}{(\hat y_m^{k-1}-\hat y_i^k)\eps^{-1/2}-1+\theta(i-m)}\\
\prod_{n=i}^{k-1} \frac{(\hat y_i^k-\hat y_{n+1}^k)\eps^{-1/2}+\theta(n-i)+1}{(\hat y_i^k-\hat
y_{n+1}^k)\eps^{-1/2}+\theta(n-i+1)} \cdot \frac{(\hat y_i^k-\hat
y_n^{k-1})\eps^{-1/2}+\theta(n-i+1)}{(\hat y_i^k-\hat y_n^{k-1})\eps^{-1/2}+\theta(n-i)+1}.
\end{eqnarray*}
Expanding the last expression into a power series in terms of $\eps^{-1/2}$ we get
$$
 \eps^{-1} +\eps^{-1/2}\left(\sum_{m=1}^{i-1} \left(\frac{\theta-1}{\hat y_m^k-\hat y_i^k}+ \frac{1-\theta}{\hat y_m^{k-1}-\hat y_i^k}\right)\right)+ \left(\sum_{n=i}^{k-1}
 \left(\frac{1-\theta}{\hat y_i^k-\hat y_{n+1}^k}+\frac{\theta-1}{\hat y_i^k-\hat y_n^{k-1}}\right)
 \right) +O(1).
$$
The lemma now readily follows.
\end{proof}

\subsection{Step 2: Tightness}\label{Section_multi_tight}
We show next that the family $Y^{mu}_\eps$, $\eps\in(0,1)$ is tight on $D^{N(N+1)/2}$. To this end, we aim to apply the necessary and sufficient conditions for tightness of \cite[Corollary 3.7.4]{EK} which amount to showing that, for any fixed $t\geq0$, the random variables $Y^{mu}_\eps(t)$, $\eps\in(0,1)$ are tight on $\rr^{N(N+1)/2}$ and that, for every $\Delta>0$ and $T>0$, there exists a $\delta>0$ such that
\[
\limsup_{\eps\downarrow0}\,\pp\Big(\sup_{0\leq s<t\leq T,t-s<\delta}
\big|(Y^{mu}_\eps)^k_i(t)-(Y^{mu}_\eps)^k_i(s)\big|>\Delta\Big)<\Delta,\quad 1\leq i\leq k\leq N.
\]
We start by explaining how to deal with $(Y^{mu}_\eps(t))_+$ (the vector of positive parts of components of $Y^{mu}_\eps(t)$) and
\eq\label{tightmulti}
\sup_{0\leq s<t\leq T,t-s<\delta}
\big((Y^{mu}_\eps)^k_i(t)-(Y^{mu}_\eps)^k_i(s)\big),\quad 1\leq i\leq k\leq N.
\en

\medskip

We argue by induction over the value of $k$. For $k=1$, it is sufficient to observe that $(Y^{mu}_\eps)^1_1$ is a Poisson process with jump size $\eps^{1/2}$, jump rate $\eps^{-1}$ and drift $-\eps^{-1/2}$ and, hence, converges to a standard Brownian motion in the limit $\eps\downarrow0$. Therefore, the necessary and sufficient conditions of \cite[Corollary 3.7.4]{EK} hold for $(Y^{mu}_\eps)^1_1$.

\medskip

We now fix some $k\geq2$ and distinguish the cases $0<\theta\leq 1$ and $\theta>1$. In the first
case, we consider first $i=k$.  It is easy to see from the formulas for the jump rates in the
proof of Lemma \ref{Lemma_expansion_of_rates_multi} that for $0<\theta<1$, whenever the spacing
$(Y^{mu}_\eps)^k_k-(Y^{mu}_\eps)^{k-1}_{k-1}$ exceeds $\Delta/3$, the jump rate to the right of
$(Y^{mu}_\eps)^k_k$ is bounded above by $\eps^{-1}$. For $\theta=1$ the jump rate is \emph{equal}
to $\eps^{-1}$. Hence, arguing as in Section \ref{Section_tightness_single}, we conclude that for
$0<\theta\le 1$, $(Y^{mu}_\eps)^k_k$ can be coupled with a Poisson process with jump size
$\eps^{1/2}$, jump rate $\eps^{-1}$ and drift $-\eps^{-1/2}$ in such a way that, whenever
$(Y^{mu}_\eps)^k_k-(Y^{mu}_\eps)^{k-1}_{k-1}$ exceeds $\Delta/3$ and $(Y^{mu}_\eps)^k_k$ has a
jump to the right, the Poisson process jumps to the right as well. Therefore, the convergence of
such Poisson processes to a standard Brownian motion and the necessary and sufficient conditions
of \cite[Corollary 3.7.4]{EK} for them imply the corresponding conditions for
$((Y^{mu}_\eps)^k_k(t))_+$ and the quantity in \eqref{tightmulti} with $i=k$.

\medskip

For $i\in\{1,2,\ldots,k-1\}$, the quantity $((Y^{mu}_\eps)^k_i(t))_+$ can be bounded above by the
quantity $((Y^{mu}_\eps)^{k-1}_i(t))_+$ and the latter satisfies the required condition by the
induction hypothesis. Moreover, the formula for the jump rates \eqref{eq_multi_asymp} reveals that, whenever
$(Y^{mu}_\eps)^k_i-(Y^{mu}_\eps)^{k-1}_{i-1}$ and
$(Y^{mu}_\eps)^{k-1}_i-(Y^{mu}_\eps)^k_i$ both exceed $\Delta/4$, the jump rate to the right of
$(Y^{mu}_\eps)^k_i$ is bounded above by \eq
\eps^{-1}+\frac{\eps^{-1/2}(1-\theta)}{(Y^{mu}_\eps)^{k-1}_i(t)-(Y^{mu}_\eps)^k_i(t)}+O(1) \leq
\eps^{-1}+4\,\eps^{-1/2}(1-\theta)/\Delta+O(1). \en Hence, $(Y^{mu}_\eps)^k_i$ can be coupled with
a Poisson process with jump size $\eps^{1/2}$, jump rate given by the right-hand side of the latter
inequality and drift $-\eps^{-1/2}$ in such a way that, whenever
$(Y^{mu}_\eps)^k_i-(Y^{mu}_\eps)^{k-1}_{i-1}$ and $(Y^{mu}_\eps)^{k-1}_i-(Y^{mu}_\eps)^k_i$ both
exceed $\Delta/4$ and $(Y^{mu}_\eps)^k_i$ has a jump to the right, the Poisson process jumps to the
right as well. Thus, the convergence of such Poisson processes to Brownian motion with drift
$4\,(1-\theta)/\Delta$ and the necessary and sufficient conditions of  \cite[Corollary 3.7.4]{EK}
for them imply the corresponding control on the quantities in \eqref{tightmulti}.

\medskip

In the case $\theta>1$, we first consider $i=1$. From the formulas for the jump rates in the proof of Lemma \ref{Lemma_expansion_of_rates_multi} it is not hard to see that the jump rate to the right of the process $(Y^{mu}_\eps)^k_1$ is bounded above by $\eps^{-1}$. Therefore, it can be coupled with a Poisson process with jump size $\eps^{1/2}$, jump rate $\eps^{-1}$ and drift $-\eps^{-1/2}$ in such a way that, whenever $(Y^{mu}_\eps)^k_1$ has a jump to the right, the Poisson process jumps to the right as well. Thus, the convergence of such Poisson processes to a Brownian motion and the necessary and sufficient conditions of \cite[Corollary 3.7.4]{EK} for them give the desired control on $((Y^{mu}_\eps)^k_1(t))_+$ and the quantity in \eqref{tightmulti} with $i=1$.

\medskip

For $i\in\{2,3,\ldots,k\}$, the formulas for the jump rates in the proof of Lemma \ref{Lemma_expansion_of_rates_multi} reveal that, whenever $(Y^{mu}_\eps)^k_i-(Y^{mu}_\eps)^{k-1}_{i-1}$ exceeds $\Delta/3$, the jump rate to the right of $(Y^{mu}_\eps)^k_i$ is bounded above by
\eq
\eps^{-1}-\frac{\eps^{-1/2}(1-\theta)}{(Y^{mu}_\eps)^k_i(t)-(Y^{mu}_\eps)^{k-1}_{i-1}(t)}+O(1)
\leq \eps^{-1}-3\,\eps^{-1/2}(1-\theta)/\Delta+O(1).
\en
Hence, $(Y^{mu}_\eps)^k_i$ can be coupled with a Poisson process with jump size $\eps^{1/2}$, jump rate given by the right-hand side of the last inequality and drift $-\eps^{-1/2}$, so that, whenever $(Y^{mu}_\eps)^k_i-(Y^{mu}_\eps)^{k-1}_{i-1}$ exceeds $\Delta/3$ and $(Y^{mu}_\eps)^k_i$ has a jump to the right, the Poisson process jumps to the right as well. Therefore, the convergence of such Poisson processes to Brownian motion with drift $-3\,(1-\theta)/\Delta$ and the necessary and sufficient conditions of \cite[Corollary 3.7.4]{EK} for them yield the corresponding conditions
for $((Y^{mu}_\eps)^k_i(t))_+$ and the quantities in \eqref{tightmulti}.

\medskip

Finally, we note that the quantities $(Y^{mu}_\eps(t))_-$ (the vector of negative parts of components of $Y^{mu}_\eps(t)$) and
\[
\sup_{0\leq s<t\leq T,t-s<\delta} -\big((Y^{mu}_\eps)^k_i(t)-(Y^{mu}_\eps)^k_i(s)\big),\quad 1\leq
i\leq k\leq N
\]
can be analyzed in a similar manner (however, now by moving from the leftmost to the rightmost particle on every level for $0<\theta\leq 1$ and vice versa for $\theta>1$). By combining everything together and using \cite[Corollary 3.7.4]{EK} we conclude that the family $Y^{mu}_\eps$, $\eps\in(0,1)$ is tight.

\medskip

We also note that, since the maximal size of the jumps tends to zero as $\eps\downarrow 0$, any limit point of the family $Y^{mu}_\eps$, $\eps\in(0,1)$ as $\eps\downarrow 0$ must have continuous paths (see e.g. \cite[Theorem 3.10.2]{EK}).

\subsection{Step 3: SDE for subsequential limits}
\label{Section_multi_SDE}

Writing $Y^{mu}$ for an arbitrary limit point as $\eps\downarrow0$ of the tight family $Y^{mu}_\eps$, $\eps\in(0,1)$ as
before, our goal now is to prove that $Y^{mu}$ solves the SDE \eqref{eq_intDBM_SDE}. We pick a sequence of $Y^{mu}_\eps$ which converges to $Y^{mu}$ in law, and by virtue of the Skorokhod Embedding Theorem (see e.g. \cite[Theorem 3.5.1]{Du}) may assume that all processes involved are defined on the same probability space and that the convergence holds in the almost sure sense. In the rest of this section all the limits $\eps\downarrow 0$ are taken along such a sequence.

\medskip

Define the set of test functions
\[
\mathcal{F}^{mu}:=\big\{f\in
C_0^\infty(\overline{\mathcal{G}^N})|\;\exists\,\delta>0:\;f(x)=0\;\,\mathrm{whenever}\;\,\mathrm{dist}(x,
\partial\overline{\mathcal{G}^N})\leq\delta\big\}.
\]
Hereby, $\partial\overline{\mathcal{G}^N}$ stands for the boundary of $\overline{\mathcal{G}^N}$ and $\mathrm{dist}$ denotes the usual Euclidean distance. In addition, for every test function $f\in\mathcal{F}^{mu}$, define the process
\begin{eqnarray*}
M^f(t):=f(Y^{mu}(t))-f(Y^{mu}(0))- \sum_{1\leq i\leq k\leq N} \int_0^t
\frac{1}{2}\,f_{y^k_iy^k_i}(Y^{mu}(s))\,\mathrm{d}s
\qquad\qquad\qquad\qquad\qquad\qquad\quad\\
- \sum_{1\leq i\leq k\leq N} \int_0^t \Big(\sum_{m\neq i} \frac{1-\theta}{(Y^{mu})^k_i(s)-(Y^{mu})^k_m(s)}
-\sum_{m=1}^{k-1} \frac{1-\theta}{(Y^{mu})^k_i(s)-(Y^{mu})^{k-1}_m(s)}\Big)f_{y^k_i}(Y^{mu}(s))\,\mathrm{d}s.
\end{eqnarray*}
Our first aim is to show that each $M^f$ is a martingale. To this end, we fix an $f\in\mathcal{F}^{mu}$ and
consider the family of martingales
\begin{eqnarray*}
M^f_\eps(t)&:=&f(Y^{mu}_\eps(t))-f(Y^{mu}_\eps(0))
+ \int_0^t \sum_{1\leq i\leq k\leq N} \eps^{-1/2}\,f_{y^k_i}(Y^{mu}_\eps(s))\,\mathrm{d}s \\
&& -\int_0^t\sum_{y'\approx_\eps Y^{mu}_\eps(s)} q^{mu}_\eps(Y^{mu}_\eps(s),\hat y',s)\big(f(y')-f(Y^{mu}_\eps(s))\big)
\,\mathrm{d}s,\quad \eps>0
\end{eqnarray*}
where the notations are the same as in Section \ref{Section_multi_rates}. Now, one can argue as in Section \ref{Section_SDE_DBM_limits}, Step 3a to conclude that $M^f$ is a martingale, as the $\eps\downarrow0$ limit of the martingales $M^f_\eps$ which can be bounded uniformly on every compact time interval.

\medskip

Next, for each $\delta>0$ and $K>0$, we define the stopping time
\begin{eqnarray*}
\widehat\tau_{\delta,K}:=\inf\big\{t\ge0:\;\mathrm{dist}(Y^{mu}(t),\partial\overline{\mathcal{G}^N})\le\delta
\quad\mathrm{or}\quad|(Y^{mu})^k_i(t)|\ge K\;\mathrm{for\;some\;}(k,i)\big\}.
\end{eqnarray*}
Now, note that for every function $\overline{\mathcal{G}^N}\rightarrow\rr$ of the form $x\mapsto x^k_i$ or $x\mapsto x^k_i\,x^{k'}_{i'}$, there is a function $f\in\mathcal{F}^{mu}$ which coincides with that function on
\[
\{x\in\overline{\mathcal{G}^N}:\;\mathrm{dist}(x,\partial\overline{\mathcal{G}^N})\geq\delta,\;
|x^k_i|\le K\;\mathrm{for\;all\;}(k,i)\}.
\]
Combining this observation, the Optional Sampling Theorem and the conclusion of the preceeding
paragraph we deduce that all processes of the two forms
\begin{eqnarray*}
M^{(k,i)}(t)&:=&(Y^{mu})^k_i(t\wedge\widehat\tau_{\delta,K})-(Y^{mu})^k_i(0) \\
&&-\int_0^{t\wedge\widehat\tau_{\delta,K}} \Big(\sum_{m\neq i} \frac{1-\theta}{(Y^{mu})^k_i(s)-(Y^{mu})^k_m(s)}
-\sum_{m=1}^{k-1} \frac{1-\theta}{(Y^{mu})^k_i(s)-(Y^{mu})^{k-1}_m(s)}\Big)\,\mathrm{d}s, \\
M^{(k,i),(k',i')}(t)&:=&(Y^{mu})^k_i(t\wedge\widehat\tau_{\delta,K})(Y^{mu})^{k'}_{i'}(t\wedge\widehat\tau_{\delta,K})
-(Y^{mu})^k_i(0)(Y^{mu})^{k'}_{i'}(0)-\mathbf{1}_{\{k=k',i=i'\}}\cdot t\wedge\widehat\tau_{\delta,K} \\
&& -\int_0^{t\wedge\widehat\tau_{\delta,K}} \Big(\sum_{m\neq i} \frac{(1-\theta)(Y^{mu})^{k'}_{i'}(s)}
{(Y^{mu})^k_i(s)-(Y^{mu})^k_m(s)}
-\sum_{m=1}^{k-1} \frac{(1-\theta)(Y^{mu})^{k'}_{i'}(s)}{(Y^{mu})^k_i(s)-(Y^{mu})^{k-1}_m(s)}\Big)\,\mathrm{d}s \\
&& -\int_0^{t\wedge\widehat\tau_{\delta,K}} \Big(\sum_{m\neq i'} \frac{(1-\theta)(Y^{mu})^k_i(s)}
{(Y^{mu})^{k'}_{i'}(s)-(Y^{mu})^{k'}_m(s)}
-\sum_{m=1}^{k'-1} \frac{(1-\theta)(Y^{mu})^k_i(s)}{(Y^{mu})^{k'}_{i'}(s)-(Y^{mu})^{k'-1}_m(s)}\Big)\,\mathrm{d}s
\end{eqnarray*}
are martingales.  It now remains to follow the proof of \cite[Chapter 5 ,Proposition 4.6]{KS}
(only replacing every occurence of $t$ by $t\wedge\widehat\tau_{\delta,K}$) to end up with the
system of stochastic integral equations
\begin{eqnarray*}
&&(Y^{mu})^k_i(t\wedge\widehat\tau_{\delta,K})-(Y^{mu})^k_i(0) \\
&&=\int_0^{t\wedge\widehat\tau_{\delta,K}} \Big(\sum_{m\neq i} \frac{1-\theta}{(Y^{mu})^k_i(s)-(Y^{mu})^k_m(s)}
-\sum_{m=1}^{k-1} \frac{1-\theta}{(Y^{mu})^k_i(s)-(Y^{mu})^{k-1}_m(s)}\Big)\,\mathrm{d}s
+\mathrm{W}^k_i(t\wedge\widehat\tau_{\delta,K}),\\
&&\qquad\qquad\qquad\qquad\qquad\qquad\qquad\qquad\qquad\qquad\qquad\qquad\qquad\qquad\qquad\qquad\qquad\;\;
1\leq i\leq k\leq N
\end{eqnarray*}
where $W^k_i$, $1\leq i\leq k\leq N$ are independent standard Brownian motions, possibly defined
on an extension of the underlying probability space. At this stage, one can repeat the argument at
the end of Section \ref{section_stopping_time_infinity} to show that $\lim_{K\uparrow\infty}
\widehat\tau_{\delta,K}\geq\tau_{\delta}[Y^{mu}]$ and then combine Propositions
\ref{Proposition_uniqueness_integral} and \ref{proposition_multlilevel_stopping} to end up with
the SDE of Theorem \ref{Theorem_Limit_SDE} as desired.
\begin{rmk} In Section \ref{Section_SDE_DBM_limits} we used Lemma \ref{Lemma_covariance_single}
instead of \cite[Chapter 5, Proposition 4.6]{KS}, but we could have used the latter as well. On
the other hand it is not straightforward to generalize Lemma \ref{Lemma_covariance_single} to the
setting of the current section, because there is no multilevel analogue for Proposition
\ref{Prop_sumpoisson}.
\end{rmk}

\end{document}